\documentclass[a4paper]{amsart}

\usepackage{subfiles}
\usepackage{hyperref}
\usepackage{mathrsfs}
\usepackage{pdflscape}
\usepackage[export]{adjustbox}
\usepackage{tikz-cd}
\usepackage{todonotes}

\hypersetup{urlcolor=blue, citecolor=blue, linkcolor=blue, colorlinks=true}

\usepackage{enumitem}
\usepackage{array}
\usepackage{amssymb}
\usepackage{mathtools}
\usepackage{marvosym}
\usepackage{wrapfig}
\usepackage{faktor}
\usepackage{amsfonts}
\usepackage{microtype}
\usepackage{multicol}
\usepackage{subcaption}
\usepackage{xcolor}
\usepackage{stackengine,scalerel}
\usepackage{tikz}
\usetikzlibrary{arrows}
\usetikzlibrary{decorations.pathmorphing}
\tikzset{snake it/.style={decorate, decoration=snake}}
\usepackage{tikz-cd}
\usepackage{amsthm}
\usepackage{thmtools}
\usepackage[nameinlink,capitalise]{cleveref}
\usepackage[breakable]{tcolorbox}
\usepackage[utf8]{inputenc}
\usepackage{csquotes}
\usepackage[english]{babel}

\newcolumntype{L}{>{\scriptstyle}l}
\newcolumntype{C}{>{\scriptstyle}c}
\newcolumntype{R}{>{\scriptstyle}r}

  \definecolor{grau1}{RGB}{100, 100, 100}
  \definecolor{grau2}{RGB}{150, 150, 150}
  \definecolor{grau3}{RGB}{200, 200, 200}

\usepackage{newunicodechar}
\usepackage[style=alphabetic,backend=biber,maxnames=5,maxalphanames=5,doi=true,isbn=false,url=false]{biblatex}
\bibliography{literature.bib}
  
\renewbibmacro{in:}{}
\renewbibmacro*{issue+date}{%
  \ifboolexpr{not test {\iffieldundef{year}} or not test {\iffieldundef{issue}}}
    {\printtext[parens]{%
       \iffieldundef{issue}
         {\usebibmacro{date}}
         {\printfield{issue}%
          \setunit*{\addspace}%
          \usebibmacro{date}}}}
    {}%
  \newunit}

\let\oldtocsection=\tocsection
\let\oldtocsubsection=\tocsubsection
\let\oldtocsubsubsection=\tocsubsubsection
\renewcommand{\tocsection}[2]{\hspace{0em}\oldtocsection{#1}{#2}}
\renewcommand{\tocsubsection}[2]{\hspace{1em}\oldtocsubsection{#1}{#2}}
\renewcommand{\tocsubsubsection}[2]{\hspace{2em}\oldtocsubsubsection{#1}{#2}}
\newtheorem{bigthm}{Theorem}

\newtheorem{bigquestion}[bigthm]{Question}
\newtheorem{theorem}{Theorem}[section]
\newtheorem{lemma}[theorem]{Lemma}
\newtheorem{prop}[theorem]{Proposition}
\newtheorem{cor}[theorem]{Corollary}

\newtheorem*{conjecture*}{Conjecture}

\newtheorem{dcj}{Doubling conjecture}
\newenvironment{doublingconjecture}{%
  \dcj}{\enddcj}

\theoremstyle{definition}
\newtheorem{defn}[theorem]{Definition}

\theoremstyle{remark}
\newtheorem{example}[theorem]{Example}

\newtheorem{remark}[theorem]{Remark}

\newcommand{\nocontentsline}[3]{}
\newcommand{\tocless}[2]{\bgroup\let\addcontentsline=\nocontentsline#1{#2}\egroup}

\setenumerate[1]{leftmargin=*,labelindent=0pt,label=(\roman*),}

\newcommand{\pref}[2]{\hyperref[#1]{#2 \ref*{#1}}}
\newcommand{\id}{\ensuremath{\operatorname{id}}}
\newcommand{\im}{\ensuremath{\operatorname{im}}}
\newcommand{\Spin}{{\ensuremath{\operatorname{Spin}}}}
\newcommand{\so}{{\ensuremath{\operatorname{SO}}}}
\newcommand{\ort}{{\ensuremath{\operatorname{O}}}}

\newcommand{\bo}{B\ort}
\newcommand{\bso}{B\so}
\newcommand{\bspin}{B\Spin}

\newcommand{\width}{\ensuremath{\operatorname{width}}}
\newcommand{\injrad}{\ensuremath{\operatorname{injrad}}}
\newcommand{\vol}{\ensuremath{\operatorname{vol}}}
\newcommand{\area}{\ensuremath{\operatorname{area}}}

\newcommand{\scal}{\ensuremath{\operatorname{scal}}}
\newcommand{\tr}{\ensuremath{\operatorname{tr}}}
\newcommand{\sign}{\ensuremath{\operatorname{sign}}}

\newcommand{\colim}{\ensuremath{\operatorname{colim}}}

\newcommand{\too}{\longrightarrow}

\newcommand{\double}{\mathsf{d}}

\newcommand{\embeds}{\hookrightarrow}

\newcommand{\dbyd}[1]{\frac{\mathrm{d}}{\mathrm{d} #1}}

\DeclarePairedDelimiter{\scpr}{\langle}{\rangle}

\newcommand{\bbN}{\mathbb{N}}

\newcommand{\bbZ}{\mathbb{Z}}

\newcommand{\bbR}{\mathbb{R}}

\newcommand{\ahat}{\hat{\mathcal{A}}}

\newcommand{\cp}[1]{\mathbb{CP}^{#1}}
\newcommand{\bcp}[1]{\overline{\mathbb{CP}}^{#1}}

\newcommand{\op}{{\mathrm{op}}}

\newcommand{\ko}{{\mathrm{KO}}}

\newcommand{\cone}{\mathrm{Cone}}

\newcommand{\homology}{\mathrm{H}}
\newcommand{\cohomology}{\mathrm{H}}

\newcommand{\dt}{\mathrm{dt}}

\newcommand{\drho}{\mathrm{d}\rho}
\newcommand{\eucl}{\mathrm{eucl}}
\newcommand{\tor}{\mathrm{tor}}

\newcommand{\justification}[1]{
}

\begin{document}

\author[Georg Frenck]{Georg Frenck}
\email{\href{mailto:georg.frenck@math.uni-augsburg.de}{georg.frenck@math.uni-augsburg.de}}
\email{\href{mailto:math@frenck.net}{math@frenck.net}}
\urladdr{\href{http://frenck.net/Math}{Frenck.net/Math}}
\address{Universität Augsburg, Universitätsstr.~14, 86159 Augsburg, Germany}

\subjclass[2010]{}
\keywords{}

\title[The Doubling conjecture]{The doubling conjecture for positive scalar curvature}

\begin{abstract}
  The doubling conjecture predicts that a manifold admits positive scalar curvature with mean convex boundary if and only if its double admits positive scalar curvature.
  We show that it holds true for manifolds where the inclusion of the boundary satisfies a certain split-condition on fundamental groups.
  Our proof is based on surgery-techniques for positive scalar and mean curvature.
  If the boundary is non-connected, we use existence of area-minimizing hypersurfaces and the monotonicity-formula.
  Furthermore, we investigate if a psc-metric on a closed manifold can be adjusted so that a given embedded hypersurface is minimal, stable minimal or totally geodesic.
  While not true in general, such an adjustment is possible in many cases.
\end{abstract}

\maketitle

\tableofcontents

\section{Introduction}\label{sec:intro}
A classical construction in scalar curvature geometry shows that the double of a positive scalar curvature (psc) metric with mean convex boundary can be deformed to yield a smooth psc-metric\footnote{see \cite[Theorem 5.7]{gromovlawson_spin-and-scalar-curvature-in-the-presence-of-a-fundamental-group} or \cite[Corollary 4.3]{Baer-Hanke_boundary-conditions-for-scalar-curvature}.}.
Motivated by this observation Rosenberg--Weinberger formulate the following conjecture, which predicts that the converse is true, as well.

\begin{doublingconjecture}[{\cite[Conjecture 7.1]{rosenberg-weinberger_positive-scalar-curvature-on-manifolds-with-boundary-and-their-doubles}}]\label{doubling-conjecture}
  A manifold $M$ with boundary admits a metric of positive scalar curvature and mean convex boundary if and only if its double $\double M =M\cup_{\partial M} M^\op$  admits a metric of positive scalar curvature.
\end{doublingconjecture}

In this paper, we show that the \cref{doubling-conjecture} can be controlled by low-dimensional tangential information of $M$, together with a mild group-theoretic splitting condition on fundamental groups.
To state our main result let $M$ be a compact, oriented manifold with boundary $\partial M$, consisting of components $N_1,\dots,N_n$ and denote by $\iota_i\colon N_i\subset\partial M\embeds M$ the respective inclusions of boundary components.
For $i\in\{1,\dots,n\}$ consider the following injective homomorphism induced by $\iota_i$:
\begin{equation}\label{eq:underlying-criterion}
  \begin{tikzcd}
    {\faktor{\pi_1(N_i)\ }{\ \ker(\iota_i)_\ast}}\ar[r, hook] & \pi_1(M).
  \end{tikzcd}
\end{equation}
We split our main result into two cases: First, the spin and totally nonspin case and second, the almost spin case\footnote{See \cref{def:almost-spin-totally-nonspin} for the definition of totally nonspin and almost spin manifolds}.
Both of these are derived from more general results involving the language of tangential structures.

\begin{bigthm}\label{main:doubling-conjecture-spin-and-nonspin}
  Let $M$ be a manifold of dimension at least $5$ which is either spin or totally nonspin.
  Assume that the injective homomorphism from \eqref{eq:underlying-criterion} is split-injective for all $i=1,\dots,n$ and that one of the following is satisfied:
  \begin{enumerate}
    \item $\dim(M)\le 11$ or
    \item for $i\ge2$ the homomorphism $(\iota_i)_\ast\colon \pi_1(N_i)\to\pi_1(M)$ is trivial and, if $M$ is spin, then the $\alpha$-invariant $\alpha(N_i)\in\ko^{d-1}(\ast)$ vanishes.
  \end{enumerate} 
  Then the \cref{doubling-conjecture} holds for $M$.
\end{bigthm}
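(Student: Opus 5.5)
The forward implication is the classical gluing construction cited in the introduction, so only the converse needs proof: if $\double M$ admits psc, then $M$ admits a psc-metric with mean convex boundary. The plan is to realise $M$, up to surgeries that preserve psc with mean convex boundary, as something we can cut out of $\double M$. Throughout, $M$ is spin or totally nonspin and $\dim M\ge5$, so Gromov--Lawson/Schoen--Yau surgery for codimension $\ge3$, together with its boundary version (surgery on psc-metrics with mean convex boundary, performed in the interior or on boundary components), is available.

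\emph{Step 1 (single boundary component).} Write $N=\partial M$, $\pi=\pi_1(M)$ and $Q=\pi_1(N)/\ker\iota_\ast$. Let $r\colon\pi\to Q$ be a splitting of $Q\hookrightarrow\pi$. Then $\pi_1(\double M)=\pi\ast_{\pi_1(N)}\pi$ retracts onto $\pi$ via the fold map, and $r$ gives a compatible map to $Q$. Use this to compare $M$ with the manifold $W=\double M\setminus (N\times(-\varepsilon,\varepsilon))$ restricted to one half, or equivalently to $\double M$ minus a collar. The key point is that $M$ and $\double M\times[0,1]$ truncated to a half (or $M$ and $N\times[0,1]\cup$ a suitable piece) are bordant relative to the boundary over the relevant tangential $2$-type $B$ ($B\pi\times B\Spin$, or $B\pi\times B\so$ in the totally nonspin case). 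The split-injectivity is exactly what lets the classifying maps of $\double M$ and $M$ be chosen compatibly on $N$, so that the reflection bordism $\double M\times[0,1]$, cut along $N\times[0,1]$, becomes a bordism from $M$ (with boundary $N$) to a manifold obtained from $\double M$ by removing a neighbourhood of a hypersurface with cylindrical ends. After surgery below the middle dimension we make the bordism $B$-$2$-connected relative to both ends, and then a psc-metric on $\double M$ (with $N$ made to carry a product-like collar) propagates across handles of codimension $\ge3$ to $M$ with mean convex boundary. To make $\double M$'s metric compatible with $N$, I would first invoke the paper's results (announced in the abstract) adjusting a psc-metric so that an embedded hypersurface becomes minimal, or even totally geodesic. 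Cutting along a totally geodesic $N$ gives psc with boundary of zero mean curvature, and a small perturbation then makes it strictly mean convex.

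\emph{Step 2 (several components).} If $n\ge2$, a cut along one $N_i$ does not separate the way we want. Here I would take an area-minimizing hypersurface in the homology class of $N_i$ inside $\double M$, which is regular for $\dim\le 7$ and, via the monotonicity formula and dimension $\le11$ arguments, has controlled singular set or can be circumvented. Stable minimal hypersurfaces in psc manifolds carry psc after conformal change, and cutting along them yields mean convex boundary. Alternatively, under hypothesis (ii), the components $N_i$ with $i\ge2$ are $\pi_1$-trivial and (if spin) have vanishing $\alpha$-invariant. By Stolz' theorem they then bound psc-fillings compatible with the $B$-structure, so we may cap them off and attach them back via trivial-fundamental-group surgery. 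This reduces to the single-component case of Step 1.

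\emph{Main obstacle.} The hardest step is producing the psc-metric on $\double M$ with $N$ minimal, or with a mean convex cut, without loss of generality. A general adjustment fails, so this is precisely where the split condition and the dimension bound enter. I would first try the surgery/bordism argument of Step 1, which avoids the adjustment entirely by transporting psc through a $2$-connected relative bordism, and fall back on minimal-hypersurface methods only for the multi-component case in dimensions $\le 11$.
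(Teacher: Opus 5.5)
There is a genuine gap at the centre of Step~1: nothing in your argument produces a mean convex boundary. Gromov--Lawson/Schoen--Yau surgery moves psc across handles of codimension $\ge3$, but starting from the closed manifold $\double M$ it never controls the mean curvature of $N$.

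The step that actually carries the weight is making $N$ minimal or totally geodesic in $\double M$ via \cref{main:minimal-surfaces}, and that step is circular. For $\Sigma=\partial M\subset\double M$, cutting along a minimal $\Sigma$ \emph{is} the desired conclusion, and the paper proves that theorem precisely by applying \cref{lem:doubling-conjecture-criterion-one-component} to this situation. Its hypotheses also fail here: $\pi_1(N)\to\pi_1(\double M)\cong\pi_1(M)\ast_Q\pi_1(M)$ has kernel $\ker\iota_\ast$, and in the totally nonspin case $N$ may be spin.

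What is missing is an auxiliary model with strictly mean convex boundary. Let $\theta$ be the tangential $2$-type of the \emph{boundary} and assume it extends to $M$. Your $B\pi\times\bspin$ is the $2$-type of $M$; one needs a $2$-connected structure map on $N$, which $B\pi$ gives only if $\iota_\ast$ is an isomorphism, and $BQ$ (via your splitting) only if $\ker\iota_\ast=1$. Under that assumption, surger $M$ within its $\theta$-bordism class to $M'$ with $\partial M\embeds M'$ $1$-connected. Wall's handle cancellation plus Lawson--Michelsohn's mean convex handle attachment (\cref{cor:boundaries-with-hpsc-metrics}) then give $M'$ a psc-metric $g'$ with strictly mean convex boundary. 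Since $M^\op\amalg M'$ is $\theta$-bordant to $\partial M\times[0,1]$ rel boundary, \cref{prop:psc-from-doubles-to-cylinders} yields a psc collar from $g_\double|_{M^\op}$ to $g'$, which you glue onto $g_\double|_M$.

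Reaching this situation also requires reductions you skip. First, kill $\ker(\iota_i)_\ast$ by $2$-handles attached inside $M$; the kernel is normally finitely generated because $Q_i$ is a retract of the finitely presented group $\pi_1(M)$. The new boundary $\Sigma_i$ then has $\pi_1(\Sigma_i)\cong Q_i$ and includes $2$-connectedly into the handle region, and you pass to the complement via \cref{lem:inheritance-for-doubling-conjecture}. Second, in the totally nonspin case, first make each boundary component totally nonspin by boundary connected sum with a neighbourhood of an embedded $S^2$ with nontrivial normal bundle.

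Step~2 is not a proof either. An area minimizer $S$ in the class of $N_i$ may intersect $N_i$, so you cannot cut $\double M$ along it. Moreover, cutting along $S$ produces a manifold bounded by $S$, not by $N_i$, and the conformal-change property of stable minimal hypersurfaces plays no role here. The paper instead passes to the cyclic covers $m\cdot M_0$ of $M_0=M\cup_{\partial M\setminus N_i}M^\op$. There, the uniform monotonicity lower bound on $m$ disjoint balls contradicts $\vol(S_m)\le\vol(N_i)$ for large $m$ (\cref{lem:hypersurfaces-in-covers}), giving a minimizer that misses some copy of $N_i$. B\"ar--Hanke then makes $S$ strictly mean convex, the two sides are capped by $(M_j')^\op$, and the doubles-to-cylinders argument returns a psc-metric on $\double M$ for which $N_i$ is minimal, one component at a time.

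In case (ii) you cannot apply Stolz's theorem to $N_i$ directly. Triviality of $(\iota_i)_\ast$ does not make $N_i$ simply connected, and e.g.\ a torus $T^{d-1}$ has vanishing $\alpha$-invariant but no psc. Only after the $2$-handle surgery are the new components $\Sigma_i$, $i\ge2$, simply connected with $\alpha(\Sigma_i)=\alpha(N_i)=0$, hence psc. Joining them to $\Sigma_1$ by thickened paths and using warped collars $dt^2+f(t)^2h_i$ in the inheritance lemma then reduces to connected boundary.
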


\begin{remark}
  \begin{enumerate}
    \item 
    If $\partial M$ is connected, (ii) is vacuously satisfied.
    In this case there is no further assumption other than \eqref{eq:underlying-criterion} being split-injective. 
    \item
    Since $(K3\#\bcp{2})\times S^1$ admits a psc-metric, \cref{main:doubling-conjecture-spin-and-nonspin} implies that the totally nonspin manifold $(K3\#\bcp2)\times[-1,1]$ admits a psc-metric with mean convex boundary.
    This was hinted at in \cite[Section 7, (2)]{rosenberg-weinberger_positive-scalar-curvature-on-manifolds-with-boundary-and-their-doubles} as a potential counterexample to the \cref{doubling-conjecture}.
  \end{enumerate}
\end{remark}

\medskip

\noindent In the case of almost spin manifolds, we need an additional assumption.

\begin{bigthm}\label{main:doubling-conjecture-almost-spin}
  Let $M$ be an almost spin manifold of dimension $d\ge5$.
  Assume that the injective homomorphism from \eqref{eq:underlying-criterion} is split-injective and that the inclusion $\iota_i$ induces an isomorphism 
  \begin{align}\label{eq:almost-spin-extra-assumption}
    H^2\bigl(B\pi_1(M);\bbZ/2\bigr)\to H^2\bigl(B\left(\faktor{\pi_1(N_i)\ }{\ \ker(\iota_i)_\ast}\right);\bbZ/2\bigr)
  \end{align}
  for every $i$. 
  If $\partial M$ is not connected, assume further that $\dim(M)\le 11$.
  Then the \cref{doubling-conjecture} holds for $M$.
\end{bigthm}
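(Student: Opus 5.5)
The ``only if'' direction is the classical doubling construction recalled at the start of the introduction, so only the converse needs proof: if $\double M$ carries a psc-metric, then $M$ carries a psc-metric with mean convex boundary. The plan runs parallel to the proof of \cref{main:doubling-conjecture-spin-and-nonspin}. The relevant tangential structure is now the almost spin one, $\theta\colon B\pi_1(M)\times\bspin\to\bo$, twisted by a class $w\in H^2(B\pi_1(M);\bbZ/2)$ with $w_2(M)=c^*w$. Here $c\colon M\to B\pi_1(M)$ is the classifying map.

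First I bring the fundamental groups into a normal form. Write $G_i=\pi_1(N_i)/\ker(\iota_i)_\ast$. Split-injectivity of \eqref{eq:underlying-criterion} gives retractions $r_i\colon\pi_1(M)\to G_i$, hence maps $B\pi_1(M)\to BG_i$. Using these, I want a bordism relative to $\partial M$, with the boundary fixed, from $M$ to a manifold $M'$ that is obtained from a collar $N_i\times[0,1]$ of each boundary component by surgeries of codimension at least $3$. This is done through the $\theta$-structure. Surgeries of codimension at least $3$ preserve psc and mean convexity, by the surgery results for positive scalar and mean curvature proved earlier in the paper. So it suffices to show that $M$ is $\theta$-bordant rel boundary to such a handle decomposition built on the collars, with all handles of index at most $d-3$.

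For this I compare $\double M$ with the double of the collar construction. A psc-metric on $\double M$ yields, via surgery on $\double M$ and the bordism theorem for the structure $\theta$, the required information. In the spin case this step used the $\alpha$-invariant. In the almost spin case one needs the twisted bordism group $\Omega^{\Spin}_d(B\pi_1(M);w)$, and it must agree, after pulling back, with the one relevant for the boundary.

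This is exactly where the isomorphism \eqref{eq:almost-spin-extra-assumption} enters. It says that $w$ is pulled back from a unique class $w_i$ on $BG_i$, and that $r_i^*w_i=w$. Consequently the retraction $B\pi_1(M)\to BG_i$ is a map of twisted tangential structures, and the $\theta$-structure of $N_i$ extends compatibly over $M$. I expect this compatibility of the twisting classes to be the main obstacle. Without \eqref{eq:almost-spin-extra-assumption}, the retraction need not respect $w$, and the reduction to the boundary component fails.

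Once compatibility is established, the argument of \cref{main:doubling-conjecture-spin-and-nonspin} carries over. If $\partial M$ is connected, the collar argument completes the proof directly. If $\partial M$ is not connected, I would separate the boundary components as in the earlier proof. One uses area-minimizing hypersurfaces in $\double M$ homologous to the various $N_i$, whose regularity is guaranteed by $\dim M\le 11$, and then the monotonicity formula. Applied to each component, this produces a psc-metric with minimal, hence (after perturbation) mean convex, boundary on pieces. These pieces are then handled component by component by the connected-boundary case.
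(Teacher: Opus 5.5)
You correctly locate where \eqref{eq:almost-spin-extra-assumption} enters. It makes the retraction $B\pi_1(M)\to BG_i$ carry the twisting class of $M$ to that of the boundary, so that the boundary's $2$-type extends over $M$. That is exactly \cref{prop:tangential-2-type-extends}~(iii), after which the paper simply quotes \cref{lem:doubling-conjecture-criterion-one-component} and \cref{lem:criterion-for-non-connected-boundaries}.

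However, the step that converts a psc-metric $g_{\double}$ on $\double M$ into a psc-metric on $M$ with mean convex boundary is missing, and what you put in its place does not work. Producing a $\theta$-bordism rel boundary from $M$ to some $M'$ with psc and mean convex boundary is not a reduction. Such an $M'$ always exists, independently of $\double M$: make the $\theta$-structure $2$-connected by interior surgery, so that $\partial M\embeds M'$ is $1$-connected, and apply \cref{cor:boundaries-with-hpsc-metrics}. Psc cannot be transported back from $M'$ to $M$, because the $2$-type of $M$ need not extend over this bordism. For instance, $T^d\setminus D^d$ with a bounding spin structure is spin bordant rel boundary to $D^d$, yet it carries no psc-metric with mean convex boundary, since its double is enlargeable.

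Nor is any comparison of twisted bordism groups or any index invariant needed. In the spin case the $\alpha$-invariant is not used at this point at all. It only enters hypothesis (ii) of \cref{main:doubling-conjecture-spin-and-nonspin}, to put psc on the extra simply connected boundary components when $\dim M\ge 12$.

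The missing idea is the exchange trick of \cref{lem:doubling-conjecture-criterion-one-component}. Regard $M^{\op}\amalg M'$ as a self-cobordism of $\partial M$. It is $\theta$-bordant rel boundary to $M^{\op}\amalg M$, hence to $\partial M\times[0,1]$. Since $\partial M\to B$ is $2$-connected, \cref{prop:psc-from-doubles-to-cylinders} (the relative psc cobordism theorem) applies. It gives a psc-metric on $\partial M\times[0,1]$ that agrees with $g_{\double}|_{M^{\op}}$ near one end and with the mean convex metric of $M'$ near the other. Gluing it to $g_{\double}|_M$ yields the desired metric on $M\cup_{\partial M}(\partial M\times[0,1])\cong M$.

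There is also a gap before this step. The hypothesis only says that $G_i=\pi_1(N_i)/\ker(\iota_i)_\ast$ split-injects into $\pi_1(M)$. When $(\iota_i)_\ast$ has a kernel, the $2$-type of $N_i$ itself (built on $B\pi_1(N_i)$) does not extend over $M$. A structure built on $BG_i$ is not $2$-connected on $N_i$, so the cylinder step above is unavailable.

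The paper handles this as follows:
\begin{enumerate}
\item It attaches $2$-handles inside $M$ along finitely many loops normally generating $\ker(\iota_i)_\ast$. Finitely many suffice because $G_i$, a retract of the finitely presented group $\pi_1(M)$, is finitely presented.
\item It removes the resulting collar $W\colon N_i\leadsto\Sigma_i$ using \cref{lem:inheritance-for-doubling-conjecture}. Here $N_i\embeds W$ is $1$-connected and $\Sigma_i\embeds W$ is $2$-connected.
\item Only then does it apply \cref{prop:tangential-2-type-extends}~(iii) to $\Sigma_i$, which has $\pi_1(\Sigma_i)\cong G_i$.
\end{enumerate}

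Finally, in the disconnected case the pieces cut out along the area-minimizing hypersurface $S$ are not treated by the connected-boundary case. The paper glues $(M_1')^{\op}$ and $(M_2')^{\op}$ onto the two copies of $S$ and extends the metric by \cref{lem:cobordism-principle-for-hpsc-bounding}. It then uses the same cylinder trick to make one boundary component at a time minimal in $\double M$, without changing the metric elsewhere.
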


The assumption on \eqref{eq:almost-spin-extra-assumption} from \cref{main:doubling-conjecture-almost-spin} forces the map from \eqref{eq:underlying-criterion} to be non-trivial and is satisfied for example, if the homomorphism $(\iota_i)_\ast$ from \eqref{eq:underlying-criterion} is an isomorphism for every $i\ge1$.

\begin{remark}
  Before formulating the \cref{doubling-conjecture} in \cite{rosenberg-weinberger_positive-scalar-curvature-on-manifolds-with-boundary-and-their-doubles}, Rosenberg--Weinberger gather some evidence based on obstruction and existence results of Stolz and Führing \cite{stolz_simplyconnected,fuehring_a-smooth-variation-of-baas-sullivan-theory-and-positive-scalar-curvature}.
  As a consequence, their results require $\dim(M)\ge6$ and strong assumptions on the fundamental group of $M$.
  For example, they show that the doubling conjecture holds true for a high-dimensional spin manifold $M$, if its fundamental group has homological dimension at most $\dim(M)-2$ and satisfies both the Baum--Connes conjecture with coefficients and the Gromov--Lawson--Rosenberg conjecture, both of which are known to have counterexamples \cite{Schick_A-counterexample-to-the-unstable-GromovLawsonRosenberg-conjecture,Higson-Lafforgue-Skandalis_Counterexamples-to-the-BaumConnes-conjecture}.

  In contrast, our proofs are based on surgery constructions for positive scalar and mean curvature and on the existence of smooth, area-minimizing hypersurfaces.
  Besides the split condition on the homomorphism \eqref{eq:underlying-criterion}, there are no assumptions on the fundamental group of $M$ or $\partial M$.
  For example, \cref{main:doubling-conjecture-spin-and-nonspin} implies that the doubling conjecture holds for all totally non-spin manifolds and for spin-manifolds up to dimension $11$ for which the inclusions $\iota_i\colon N_i\subset\partial M\embeds M$ of boundary components induce trivial homomorphisms on fundamental groups.
\end{remark}

For $1$-manifolds, the \cref{doubling-conjecture} is vacuously true and in dimension $2$, the doubling conjecture follows from the Gauß--Bonnet theorem.
In dimension $3$ it can be deduced from \cite[Theorem 2.1]{carlotto-li_constrained-deformations-of-positive-scalar-curvature-I}, see \cref{sec:low-dimensional}.

\medskip

In the realm of $4$-manifolds, counterexamples to positive scalar curvature conjectures are often detected by Seiberg--Witten invariants.
For example $K3\#\bcp2$ is a counterexample to \cite[Conjecture 7.1]{Rosenberg-Stolz_Manifolds-of-positive-scalar-curvature} and to the \enquote{$S^1$-stability conjecture} \cite[Conjecture 1.24]{Rosenberg_Manifolds-of-positive-scalar-curvature-a-progress-report}, whereas $4\cp2\#21\bcp2$ carries counterexamples to the \enquote{concordance-implies-isotopy-conjecture} \cite[Problem 6.3]{Rosenberg-Stolz_Metrics-of-positive-scalar-curvature-and-connections-with-surgery}, see \cite[Corollary 5.2]{Ruberman_Positive-scalar-curvature-diffeomorphisms-and-the-SeibergWitten-invariants}.
We can adjust some of our surgery techniques to obtain the following result, which excludes some simple candidates for counterexamples to the $4$-dimensional \cref{doubling-conjecture}.
\begin{bigthm}\label{main:dimension-4}
  \begin{enumerate}
    \item If the \cref{doubling-conjecture} holds for all $4$-manifolds $M$ satisfying $\partial M \cong S^3$, then the \cref{doubling-conjecture} holds for all $4$-manifolds $N$ with $\partial N \cong S^3\amalg \dots\amalg S^3$, too.
    \item If $M$ is given as the connected sum of any number of copies of $\pm K3$ and $4$-manifolds admitting psc-metrics (for example $\cp{2}$, $\bcp{2}$, $S^2\times S^2$ or $S^1\times S^3$), then $M\setminus D^4$ admits a psc-metric with mean convex boundary
    \item Assume that one of the following holds:
    \begin{enumerate}
      \item 
      $M$ is spin and $\pi_1(M)$ is a free group or
      \item
      $M$ is oriented, totally nonspin and $\pi_1(M)$ has homological dimension at most $3$, 
    \end{enumerate}
    then $k(S^2\times S^2)\#M\setminus D^4$ admits a psc-metric with mean convex boundary for some $k\ge0$.
  \end{enumerate}
\end{bigthm}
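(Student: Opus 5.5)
We treat the three parts of \cref{main:dimension-4} separately. All of them rest on two surgery facts for positive scalar and mean curvature. The first is a Gromov--Lawson type statement: if $W$ has a psc-metric with mean convex boundary, then so does any manifold obtained from $W$ by a surgery of codimension at least $3$ in the interior. The second lets us remove a small ball. If $X$ is closed with a psc-metric $g$, then $X\setminus D^4$ carries a psc-metric with mean convex boundary. To get it, we delete a tiny geodesic ball. Its boundary sphere has mean curvature of order $-1/r$ with respect to the outward normal of the complement, so it is mean concave, and we must correct the sign. We insert a torpedo-type neck and then invert it: we bend the metric near the boundary sphere along a warped product $dt^2+f(t)^2 g_{S^3}$ with $f'<0$ at the boundary. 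This is possible while keeping $\scal>0$ because the round $S^3$ factor has positive curvature; concretely, we cap with a half-sphere region glued so that the boundary is convex. This is the key technical lemma.

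\emph{Part (i).} Let $N$ have $\partial N\cong S^3\amalg\dots\amalg S^3$ with $k$ components, and suppose $\double N$ admits psc. Fill all boundary spheres but the first with discs, obtaining $M$ with $\partial M\cong S^3$. Then $\double M$ is obtained from $\double N$ by $0$-surgeries, which glue in $D^4$ copies along the $S^3\times[-1,1]$ necks. Dually, $\double N$ is $\double M$ with $k-1$ copies of $S^1\times D^3$ removed and $S^3\times[0,1]$ glued in. Since $\double N\cong \double M\#(k-1)(S^1\times S^3)$ and surgery on the $S^1$ factors has codimension $3$, $\double M$ admits psc. The hypothesis then gives $M$ a psc-metric with mean convex boundary. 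To recover $N$, we remove $k-1$ small balls from $M$ and apply the ball-removal fact above in its local form, which makes the new spheres mean convex without disturbing the original boundary. The converse direction is the classical doubling construction.

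\emph{Part (ii).} The plan is to show that $K3\setminus D^4$ carries a psc-metric with mean convex boundary, even though $K3$ itself has no psc. Here we use the freedom of the boundary: $\double(K3\setminus D^4)=K3\#\overline{K3}$. This is a simply connected, nonspin-free spin manifold with signature $0$, hence diffeomorphic to $22(S^2\times S^2)$ (or close to it by Wall/Freedman smooth classification up to stabilization). Since we only need the conclusion for $K3\setminus D^4$, we instead look for a direct decomposition. $K3$ contains a nucleus and decomposes along handles. We write $K3\setminus D^4$ as built from $D^4$ by $2$-handles, with no $1$- or $3$-handles since $K3$ is simply connected and admits a handle decomposition without them (Gompf--Stipsicz). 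Attaching a $2$-handle to the boundary has codimension $2$ in the boundary sense. The psc-with-mean-convex-boundary analogue of Gromov--Lawson, namely handle attachment along a boundary sphere of codimension $\ge 2$ in $\partial M$ (i.e.\ attaching index $\le d-2=2$ handles), should preserve psc with mean convex boundary. Starting from the round hemisphere $D^4$, we then get $K3\setminus D^4$. The same argument applies to $\pm K3$ and to connected sums: boundary connected sum of such pieces, together with ball removal from the psc summands via the first facts, produces the general case. Establishing this handle-attachment lemma for handles of index $\le d-2$ (including $2$-handles in dimension $4$) is the main obstacle. I would try to prove it by bending the metric near $\partial M$ into a collar $\partial M\times[0,\epsilon]$ with large positive mean curvature and then doing Gromov--Lawson surgery on the boundary sphere $S^1\subset\partial M$, whose normal codimension is $3$ in $\partial M$.

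\emph{Part (iii).} By handlebody theory, $M\setminus D^4$ is obtained from $D^4$ by handles of index $1,2,3$. The hypotheses let us trade handles after stabilizing by $k(S^2\times S^2)$. In case (a), a spin manifold with free $\pi_1$ is stably diffeomorphic to a connected sum of $S^1\times S^3$'s and a simply connected spin manifold. In case (b), the homological dimension bound plays the same role via Kreck's modified surgery. This yields a decomposition avoiding handles of index $3$, and part (ii)'s lemma then applies.
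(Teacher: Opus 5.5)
The genuine gap is in part (iii). Your step ``this yields a decomposition avoiding handles of index $3$, and part (ii)'s lemma then applies'' fails whenever $b_1(M)>0$. By Poincar\'e duality, $H_3\bigl(k(S^2\times S^2)\#M\setminus D^4;\bbZ\bigr)\cong H^1(M;\bbZ)$. In case (a) with $\pi_1(M)\cong F_n$ this is $\bbZ^n$, whereas anything built from $D^4$ by handles of index $\le 2$ has vanishing $H_3$. So handle attachment via Lawson--Michelsohn can never produce the metric on its own. Pieces such as $S^1\times S^3$ have to enter as closed psc summands, via interior connected sum.

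The paper does this with Kreck's stable diffeomorphism theorem for the tangential $2$-type, fed by a bordism computation that produces a model built from copies of $\pm K3$ and psc-manifolds.

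In (a), $\Omega_4^{\Spin}(BF_n)\cong\Omega_4^{\Spin}\cong\bbZ$, detected by $\sign/16$. Hence $M$ is $\theta$-bordant to $m\,K3\#n(S^1\times S^3)$, and $k(S^2\times S^2)\#M\cong\tilde k(S^2\times S^2)\#m\,K3\#n(S^1\times S^3)$, to which (ii) applies. Even your implicit claim that the simply connected spin summand stably has no $3$-handles needs exactly this identification with copies of $\pm K3$; it is not automatic.

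In (b), ``plays the same role'' hides the key idea. You must construct a closed psc-manifold $M_0$ with $\pi_1(M_0)\cong\pi_1(M)$, for example from $\#(S^1\times S^3)$ by $1$-surgeries realizing the relations; these have codimension $3$, so psc survives. Homological dimension $\le 3$ kills $H_4(B\pi_1(M);\bbZ)$, which together with $\Omega_1=\Omega_2=\Omega_3=0$ gives $\Omega_4(B\pi_1(M))\cong\Omega_4$. So $M$ is $\theta$-bordant to $M_0\#m\cp{2}\#n\bcp{2}$, and Kreck's theorem makes $k(S^2\times S^2)\#M$ diffeomorphic to a stabilization of this closed psc-manifold.

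Parts (i) and (ii) are fine modulo the following points.

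In (i), your circle surgeries on $\double N\cong\double M\#(k-1)(S^1\times S^3)$ are the paper's surgeries along the doubled paths $\double\gamma_i$. Your filled-in manifold is, up to a collar, the paper's $M\setminus W$ (there $M$ is your $N$), since $W\cup(k-1)D^4\cong S^3\times[0,1]$. Replacing a neck $S^3\times[-1,1]$ by two balls is surgery on an $S^3$, not a $0$-surgery, but your corrected formulation right after is right.

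To recover $N$ you remove $k-1$ balls, whereas the paper attaches $W$ to $(M\setminus W)\amalg\bigl((\partial M\setminus S)\times[-1,1]\bigr)$ via Lawson--Michelsohn; both work. A hemispherical cap gives the wrong sign, though: cutting anywhere in a torpedo leaves a minimal or mean concave boundary. What makes the new spheres strictly mean convex is warping the round neck by $f(s)=1+\delta s^2$, widening towards the cut, as in the proof of \cref{lem:inheritance-for-doubling-conjecture}.

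In (ii), the handle-attachment lemma you call the main obstacle is exactly Lawson--Michelsohn's theorem (\cref{thm:lawson-michelsohn}, packaged as \cref{lem:cobordism-principle-for-hpsc-bounding}). The paper simply cites it together with the Harer--Kas--Kirby handle decomposition of $K3$. Your proposed proof would not work: Gromov--Lawson surgery on $\partial M$ needs a psc-metric on $\partial M$ itself, which mean convexity does not provide, and it gives no control of the mean curvature of the new boundary. The actual mechanism is to extend the psc-metric over the handle by Gromov's relative $h$-principle. Then one takes the boundary of a thin $\epsilon$-tube around the core $D^p$, whose mean curvature is $\approx(d-p-1)/\epsilon>0$ for $p\le d-2$. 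Also start from a small strictly convex ball, not a hemisphere, whose boundary is only minimal.
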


As a consequence of \cref{main:dimension-4} (i) and (ii), we observe that the $K3$-surface with any number of balls removed satisfies the doubling conjecture.
Combining (i) and (iii) of \cref{main:dimension-4} we furthermore obtain that a $4$-manifold $M$ whose boundary is a disjoint union of $3$-spheres and which satisfies either condition from (iii) \emph{stably} satisfies the doubling conjecture.

\medskip

Beyond existence questions, doubling constructions naturally give rise to distinguished hypersurfaces. Indeed, whenever a smooth metric on a closed manifold arises by doubling a metric on a manifold with boundary, the hypersurface along which the doubling occurs is necessarily minimal. This simple observation leads to a natural geometric problem:

\begin{bigquestion}\label{que:minimal-hypersurface}
  Given a closed, oriented manifold $M$ and a two-sided hypersurface $\Sigma\subset M$, does $M$ admit a psc-metric such that $\Sigma$ is minimal, stable minimal or totally geodesic?
\end{bigquestion}

We show that \cref{que:minimal-hypersurface} has a negative answer in general (see \cref{ex:not-stable-minimal} and \cref{ex:not-minimal}), even for manifolds admitting positive scalar curvature, and we provide explicit counterexamples demonstrating that additional hypotheses are unavoidable.
On the positive side, we identify a broad class of situations in which a given hypersurface can be realized as minimal in a background metric of positive scalar curvature, namely when the hypersurface satisfies a natural extension condition relative to the ambient manifold.

\medskip

Moreover, once a background psc-metric is fixed, \cref{que:minimal-hypersurface} admits a refined, relative formulation: can the metric be modified locally near $\Sigma$, so that $\Sigma$ becomes minimal? 
The following theorem provides criteria for an affirmative answer to both the absolute and the relative version of \cref{que:minimal-hypersurface}.

\begin{bigthm}\label{main:minimal-surfaces}
  Let $M$ be a closed, oriented manifold and $g$ a psc-metric on $M$.
  Let $\Sigma\subset M$ be a two-sided, connected hypersurface.
  Assume that the inclusion $\Sigma\embeds M$ induces split-injection on fundamental groups and that either $M$ is spin or $\Sigma$ is totally nonspin.
  If $\Sigma$ is non-separating, assume further that $\dim(M)\le 11$.
  Then the following hold true: 
  \begin{enumerate}
    \item If $\dim(M)\ge5$, then there exists a psc-metric $\widetilde{g}$ on $M$ which agrees with $g$ outside a tubular neighborhood of $\Sigma$ and $\Sigma$ is minimal with respect to $\widetilde{g}$.
    \item If $\dim (M)\ge6$, then there exists a psc-metric $\widetilde{g}$ on $M$ such that $\widetilde g = \dt^2 + h$ in a neighborhood of $\Sigma$. 
    If $\Sigma$ is separating, $\widetilde g$ can be chosen to agree with $\Sigma$ outside a tubular neighborhood of $\Sigma$.
  \end{enumerate}
\end{bigthm}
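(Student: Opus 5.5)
We plan to reduce \cref{main:minimal-surfaces} to the machinery behind \cref{main:doubling-conjecture-spin-and-nonspin}. First we cut $M$ open along $\Sigma$ to obtain a compact manifold $W$. If $\Sigma$ is separating, $W = W_+\amalg W_-$ with $\partial W_\pm\cong\Sigma$. If $\Sigma$ is non-separating, $W$ is connected and $\partial W\cong\Sigma\amalg\Sigma^\op$. The split-injectivity of $\pi_1(\Sigma)\to\pi_1(M)$ should pass to the split-injectivity required in \eqref{eq:underlying-criterion} for each boundary component of each piece of $W$. In the separating case we use van Kampen, $\pi_1(M)=\pi_1(W_+)\ast_{\pi_1(\Sigma)}\pi_1(W_-)$. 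Composing $\pi_1(\Sigma)/\ker\to\pi_1(W_\pm)\to\pi_1(M)$ with the retraction $\pi_1(M)\to\pi_1(\Sigma)$ gives a retraction on the image. In the non-separating case we use the HNN-description instead. Similarly, spinness of $M$ passes to $W$. If $\Sigma$ is totally nonspin, then so is each piece of $W$, since it contains $\Sigma$ and the universal cover of $\Sigma$ maps to that of $W$.

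The core step is to produce, on each piece of $W$, a psc-metric with mean convex (or even product) boundary. For this we do not use the double but $M$ itself. The surgery arguments underlying \cref{main:doubling-conjecture-spin-and-nonspin} only need a psc-metric on some manifold that is bordant to $W$ rel boundary in the appropriate tangential structure. Concretely, we view $M\setminus(\text{collar of }\Sigma)$ with $g$ restricted to it. We then use surgery rel boundary, Gromov--Lawson/Chernysh surgery in codimension $\ge3$ and using $\dim\ge5$, to move from $(M,g)$ to each piece. The split-injectivity is what lets the relevant bordism, in $B\pi_1(W)$ with its tangential structure, be realized by handles of codimension $\ge3$ attached away from $\Sigma$. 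We expect the $\ko$/$\alpha$-type obstructions not to arise, because the metric already comes from a psc-metric on the closed manifold $M$.

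In the non-separating case $W$ has two boundary components. Here we expect to need the monotonicity formula and an area-minimizing hypersurface in the class of $\Sigma$, as in case (i) of \cref{main:doubling-conjecture-spin-and-nonspin}, which explains the hypothesis $\dim(M)\le11$ (regularity). The minimizer carries a psc-conformal metric by Schoen--Yau, and this lets us separate the two copies of $\Sigma$.

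Once we have psc-metrics with mean convex boundary on the pieces of $W$, (i) follows by gluing back with the standard boundary-gluing/doubling deformation of \cite{Baer-Hanke_boundary-conditions-for-scalar-curvature}. The mean convex boundaries are deformed to totally geodesic boundaries with product collars, and the pieces are then glued along $\Sigma$, so that $\Sigma$ is minimal. To get the stronger statement that $\widetilde g$ agrees with $g$ outside a neighborhood of $\Sigma$, we instead perform the above surgery only inside a tubular neighborhood $\Sigma\times[-1,1]$, treating it as a psc-concordance problem. For (ii) we upgrade mean convex boundary to a product $dt^2+h$ with $h$ psc on $\Sigma$. This needs $\dim\Sigma\ge5$, i.e.\ $\dim M\ge6$, so that surgery on $\Sigma$ itself, combined with the split-injectivity, yields the required psc-metric $h$ and a psc-concordance. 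The main obstacle is the relative surgery step: showing that the split condition suffices to realize the rel-boundary bordism by codimension-$\ge3$ handles compatible with $g$ near $\Sigma$.
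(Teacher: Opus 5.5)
There is a genuine gap at exactly the step you flag as the main obstacle, and the principle you propose for it is not correct. Your preliminary reductions are fine. What the paper actually extracts from split-injectivity and the spin/totally nonspin hypothesis is only that the tangential $2$-type $\theta$ of $\Sigma$ extends to $M$ (\cref{prop:tangential-2-type-extends}).

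A psc-metric on something bordant to a piece rel boundary, such as $g|_W$, carries no information. By Gromov's $h$-principle, every compact manifold all of whose components have nonempty boundary admits psc-metrics, so everything hinges on control at the boundary. Gromov--Lawson surgeries on $(M,g)$ away from $\Sigma$ leave the metric near $\Sigma$ untouched. \cref{thm:lawson-michelsohn} only lets you \emph{enlarge} a mean convex region by handles of index $\le d-2$, and a general piece is not built that way.

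The missing idea is to put the information into a closed psc-manifold and use that doubles are $\theta$-cobordant to cylinders. Write $M=M_1\cup_\Sigma M_2$. The part $M_2\amalg M_2^\op\subset M\amalg M^\op$ is $\theta$-cobordant rel boundary to $\Sigma\times[0,1]$, so \cref{prop:psc-from-doubles-to-cylinders} turns $g\amalg g^\op$ into a psc-metric on $\double M_1$. Then \cref{lem:doubling-conjecture-criterion-one-component} gives $M_1$ a psc-metric with mean convex boundary. There one passes to a $\theta$-cobordant $M_1'$ with $\Sigma\embeds M_1'$ $1$-connected, applies Lawson--Michelsohn on $M_1'$, and transfers back by the same cylinder trick. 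Finally B\"ar--Hanke makes the metric doubling.

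Your gluing step also fails as stated. B\"ar--Hanke produces doubling metrics, which glue $M_1$ to $M_1^\op$, not to $M_2$. Metrics built independently on the two pieces induce different metrics on $\Sigma$. You also cannot in general deform mean convex boundaries to product collars: that would give a psc-metric on $\Sigma$. This is impossible in the $5$-dimensional example of \cref{rem:dim-restriction-is-necessary}, which satisfies the hypotheses of the theorem.

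The paper modifies only one side. It applies \cref{prop:psc-from-doubles-to-cylinders} once more to $\double M_1\amalg M$, replacing $M_1^\op\amalg M_1$ by $\Sigma\times[0,1]$. This splices the doubling metric on $M_1$ to $g$ on $M_2$ through a collar. That is why $\Sigma$ is minimal and why $\widetilde g$ agrees with $g$ away from $\Sigma$. Your ``psc-concordance problem'' is solved precisely by this rel-boundary bordism argument, not by surgery inside $\Sigma\times[-1,1]$.

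In the non-separating case, a minimizer in the class of $\Sigma$ in $M$ itself may meet $\Sigma$, and Schoen--Yau does not separate anything. The paper passes to the cyclic covers $m\cdot M_0$ and plays $\vol(S_m)\le\vol(\Sigma)$ against the lower bound from the monotonicity formula (\cref{lem:hypersurfaces-in-covers}). This produces a minimizer $S$ disjoint from some copy of $\Sigma$. $S$ then serves as a minimal interface, which B\"ar--Hanke makes strictly mean convex, and the argument of \cref{lem:criterion-for-non-connected-boundaries} transports mean convexity to $\Sigma$. Yamabe positivity of the stable $S$ enters only in (ii), via Akutagawa--Botvinnik.
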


\begin{remark}
  \begin{enumerate}
    \item The restriction to $\dim(M)\ge6$ in part $(ii)$ of \cref{main:minimal-surfaces} is necessary as shown in \cref{rem:dim-restriction-is-necessary}.
    \item We present a slightly more general result involving the language of tangential structures, see \cref{thm:criterion-for-minimal-surface}. 
    It is possible to derive an analogous result for almost spin manifolds.
  \end{enumerate}
\end{remark}

\noindent\textbf{Outline of the argument.}
To illustrate the argument for \cref{main:doubling-conjecture-spin-and-nonspin} and \cref{main:doubling-conjecture-almost-spin}, we give the proof in the case that $M$ is spin and $\partial M$ is simply connected.
Without loss of generality, we may assume that $M$ is connected.
We embed disjoint generators of $\pi_1(M)$ which all have trivial normal bundle since $M$ is oriented.
Then we perform $1$-surgeries along tubular neighborhoods of these generators to obtain a manifold with trivial fundamental group.
We thus obtain a manifold $M'$ which is simply connected.

\medskip

By the surgery theorem for positive mean curvature of Lawson--Michelsohn \cite[Theorem 3.1]{lawsonmichelsohn_embedding-and-surrounding-with-positive-mean-curvature}, $M'$ admits a psc-metric $g'$ with strictly mean convex boundary.
Furthermore, the disjoint union $M^\op\amalg M'$ is spin-cobordant to $\partial M\times [0,1]$ relative to the boundary by \cite[Proposition 3.25]{actionofmcg}.
Since $\partial M$ is simply connected, for any psc-metric $g$ on $M^\op\amalg M'$, there exists a psc-metric on $\partial M\times[0,1]$ which agrees with $g$ in a neighborhood of the boundary, see \cref{prop:psc-from-doubles-to-cylinders}.
Now, given a psc-metric $g_{\double}$ on the double $\double M$, we can consider $g_{\double}\amalg g'$ on $\double M\amalg M'=M\cup M^\op\amalg M'$.
By the above observation, we obtain a psc-metric on $M\cup (\partial M\times[0,1])\cong M$ which agrees with $g'$ near the boundary, hence is strictly mean convex, see \cref{fig:finish_m1-intro}

  \begin{figure}[ht]
    \scalebox{.8}{
    \begin{tikzpicture}
      \node(0) at (0,0) {\includegraphics[width=.9\textwidth]{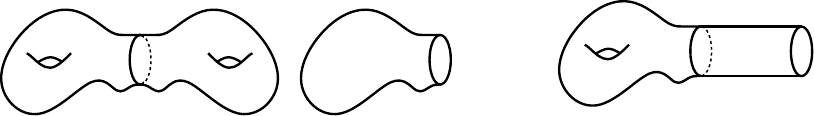}};
      \node(1) at (-5,1.2) {$M$};
      \node(1) at (4,1.2) {$M$};
      \node(2) at (-2.5,1.2) {$M^\op$};
      \node(3) at (-.5,1.1) {$M'$};
      \node(4) at (-1.6,-1.6) {$\underbrace{\quad\qquad\quad\qquad\qquad\qquad\qquad}_{\sim\  M^\op\,\amalg\, M\ \sim\  \partial M\times[0,1]}$};
      \node(6) at (1.4,0) {$\sim$};
    \end{tikzpicture}}
    \caption{}\label{fig:finish_m1-intro}
  \end{figure}

  \clearpage

The idea of this proof can be generalized to cover a larger class of manifolds, see \cref{lem:doubling-conjecture-criterion-one-component}.
However, the underlying principle can only work if the manifold $M$ has connected boundary.
In order to treat the case of several boundary components in \cref{lem:criterion-for-non-connected-boundaries}, we proceed as follows:
Given one component $\Sigma$ of the boundary, we find an area-minimal hypersurface $S$ inside the double $\double M$ of $M$, which is homologous to $\Sigma$.
Such an area-minimizer exists and is smooth in ambient dimension at most $11$ by \cite[Theorem 1.2]{Chodosh-Mantoulidis-Schulze-Wang_Generic-regularity-for-minimizing-hypersurfaces-in-dimension-11}.
A priori, submanifolds in the same homology class might still intersect, and we thus cannot assume that $\Sigma$ and $S$ bound a codimension 0 submanifold of $\double M$.
However, area-minimality of $S$ together with the monotonicity formula for arbitrary Riemannian metrics on the disk implies that a finite cover of $M$ contains disjoint copies of $\Sigma$ and $S$.
In contrast to the classical dimension-descent argument for scalar curvature using stable minimal hypersurfaces, we instead use the hypersurface $S$ as geometric interfaces along which we cut open this finite cover to obtain a manifold with a psc-metric and (stable) minimal boundary.
By \cite[Corollary 4.3]{Baer-Hanke_boundary-conditions-for-scalar-curvature} we can perturb the obtained metric so that $S$ becomes strictly mean convex and by a surgery argument we can make $\Sigma$ mean convex. 
Since the entire construction is local, we can treat one boundary component at a time to achieve a metric on $M$ with mean convex boundary.

\medskip

\noindent\textbf{Outline of the paper.}
For the proofs of our main results we use the language of tangential structures.
In \cref{sec:preliminaries} we gather the relevant notions, and we state the surgery and bordism principles for positive scalar and mean curvature using this language.
In the succeeding \cref{sec:criterion}, we give two criteria (\cref{lem:doubling-conjecture-criterion-one-component} and \cref{lem:criterion-for-non-connected-boundaries}) and one inheritance result (\cref{lem:inheritance-for-doubling-conjecture}) for the \cref{doubling-conjecture} which together form the basis for the proof of \cref{main:doubling-conjecture-spin-and-nonspin} and \cref{main:doubling-conjecture-almost-spin}, which is presented in \cref{sec:proof-of-main}.
Here, we also give the proof of the $4$-dimensional result from \cref{main:dimension-4}.
In \cref{sec:separating-hypersurfaces} we discuss \cref{que:minimal-hypersurface}, presenting examples and the proof of \cref{main:minimal-surfaces}.

\medskip

\noindent\textbf{Acknowledgements.} I thank Bernhard Hanke for numerous discussions about the doubling conjecture, minimal surfaces and scalar curvature cobordism. I would also like to thank Dieter Kotschick and Thomas Schick for their interest in this project. This work was partially supported by the DFG-SPP 2026 \enquote{Geometry at infinity} and by the University of Göttingen.

\section{Preliminaries}\label{sec:preliminaries}
\subsection{Tangential structures}

Our proofs are based on surgery principles for positive scalar and mean curvature.
The language of \emph{tangential structures} yields a convenient setting for working with these.
In order to define tangential structures, let $\bo(n)$ denote the classifying space for vector bundles of rank $n$, that is we have a bijection
\[\frac{\{V\to X \text{ vector bundle of rank } n\}}{\text{isomorphism}}\overset{{1:1}}\longleftrightarrow\frac{\{X\to \bo(n)\text{ continuous map}\}}{\text{homotopy}}\]
for every paracompact space $X$ (\cite[Theorem 1.16]{Hatcher_vector-bundles}).
We call the map $f\colon X\to\bo(n)$ corresponding to a vector bundle $V\to X$ the \emph{classifying map for $V$}.

\medskip

Note, that the classifying map for $V\oplus\underline\bbR\to X$ is given by the composition of the classifying map $f\colon X\to\bo(n)$ of $V\to X$ composed with the natural map $\bo(n)\to\bo(n+1)$.
The map $X\to \bo(n)\to\bo \coloneqq\colim_{n\to\infty}\bo(n)$ into the colimit of these natural maps is called the \emph{stable classifying map for $V$}.
The space $\bo$ classifies \emph{stable vector bundles}, see \cite[Section 1.2.1]{gollinger}.
Note, that a map $f\colon X\to \bo$ from a compact space lands in a finite stage $\bo(n)$ of this colimit and hence corresponds to (the stable isomorphism class of) a vector bundle.

\begin{defn}[Tangential structures]
  A \emph{(stable) tangential structure} is a map $\theta\colon B\to\bo$.
  Given a tangential structure $\theta$ and a vector bundle $V\to X$ with (stable) classifying map $f\colon X\to\bo$, a \emph{$\theta$-structure on $V$} is given by a lift $\ell$ of $f$ along $\theta$, that is $f=\theta\circ\ell$.
\end{defn}

\begin{example}
  Standard examples for tangential structures are $B=\bso(n)$ (orientations), $B=\bspin(n)$ (spin-structures) and $B=\ast$ (framings).
\end{example}

For studying positive scalar curvature, the most relevant tangential structures are \emph{tangential $2$-types} which we define next.

\begin{defn}[Tangential $2$-types]\label{def:tangential-2-types}
  Let $M$ be a manifold.
  A tangential structure $\theta\colon B\to \bo$ is called the \emph{(stable) tangential $2$-type of $M$} if the tangent bundle $TM\to M$ admits a $2$-connected $\theta$-structure $\ell$ and if $\theta$ is $2$-coconnected.%
  \footnote{That is $\ell$ induces isomorphisms on $\pi_0$ and $\pi_1$ and an epimorphism on $\pi_2$, whereas $\theta$ induces a monomorphism on $\pi_2$ and isomorphisms on $\pi_m$ for $m\ge 3$.}
\end{defn}

Tangential $2$-types always exist, as they are given by the second stage of the Moore-Postnikov tower for the classifying map $\tau\colon M\to\bo$ of the stable tangent bundle $TM$ of $M$.
Before giving the relevant examples of tangential $2$-types, we recall the following definition.
         
\begin{defn}\label{def:almost-spin-totally-nonspin}
  An orientable manifold $M$ is called
  \begin{enumerate}
    \item \emph{almost spin}, if $M$ does not admit a spin structure but its universal cover does.
    \item \emph{totally nonspin}, if its universal cover does not admit a spin structure.
  \end{enumerate}
\end{defn}

\noindent
Let us now give explicit descriptions of the relevant tangential $2$-types.

\begin{example}[Tangential $2$-types of orientable manifolds]\label{ex:tangential-two-types-of-manifolds}
  Let $M$ be a connected, orientable manifold of dimension $d\ge2$.
  \begin{enumerate}
    \item If $M$ is spin, then its tangential $2$-type is given by the composition of the projection and the natural map $\bspin\times B\pi_1(M)\to\bspin\to\bo$.
    \item If  $M$ is totally nonspin, then its tangential $2$-type is given by the composition of the projection and the natural map $\bso\times B\pi_1(M)\to\bo$.
    \item If $M$ is almost spin, then let $B$ be defined as the (homotopy) pullback
    \[\begin{tikzcd}
      B \arrow[r] \arrow[d]
          \arrow[dr, phantom, very near start, "{ \lrcorner }"]
        & B\pi_1(M) \arrow[d, "m"] \\
      \bso \arrow[r, "w_2"]
        & K(\bbZ/2,2)
    \end{tikzcd}\]
    Here, $w_2\in H^2(\bso;\bbZ/2)\cong[\bso,K(\bbZ/2,2)]$ is the universal second Stiefel--Whitney class and $m$ is the unique cohomology class such that $u^*m = w_2(TM)$ for $u\colon M\to B\pi_1(M)$ the map inducing the identity on fundamental groups \cite[p. 713]{kreck_surgery-and-duality}, which is unique up to homotopy.

    \medskip

    The tangential $2$-type of $M$ is given as the composition $\theta\colon B\to\bso\to\bo$ and a $\theta$-structure is given by an orientation and a map to $B\pi_1(M)$ such that the respective compositions with $m$ and $w_2$ are homotopic.
  \end{enumerate}
\end{example}

\noindent
We have the following characterization.

\begin{prop}\label{prop:criterion-for-spin-and-nonspin}
  Let $M$ be a connected, oriented manifold of dimension at least $5$.
  \begin{enumerate}
    \item $M$ is spin if and only if for every embedded surface $S\subset M$ the restriction $TM|_{S}$ of the tangent bundle to $S$ is trivial.
    \item $M$ is totally nonspin if and only if there exists an embedded sphere $S^2\subset M$ such that the restriction $TM|_{S^2}$ of the tangent bundle $TM$ to $S^2$ is nontrivial.
  \end{enumerate}
\end{prop}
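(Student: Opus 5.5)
Both parts come down to the second Stiefel--Whitney class. For an oriented vector bundle $V$ over a closed surface $S$ (or over $S^2$) of rank at least $3$, $V$ is stably trivial if and only if $w_2(V)=0$. Here $w_1=0$ because of the orientation, and oriented rank-$n$ bundles over a $2$-complex with $n\ge3$ are classified by $[S,\bso(n)]$, which is detected by $H^2(S;\pi_1\so(n))=H^2(S;\bbZ/2)$ via $w_2$. Since $\operatorname{rank}TM=\dim M\ge5$ exceeds $\dim S$, stable triviality and actual triviality agree. So in both statements we may replace ``$TM|_S$ trivial'' by ``$w_2(TM)|_S=0$''.

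\emph{Part (i).} If $M$ is spin, then $w_2(TM)=0$, so its restriction to every surface vanishes. For the converse, assume $w_2(TM)|_S=0$ for every embedded closed surface $S$, and show $w_2(TM)=0$. By the universal coefficient theorem over $\bbZ/2$, $H^2(M;\bbZ/2)\cong\operatorname{Hom}(H_2(M;\bbZ/2),\bbZ/2)$, so it suffices that $\langle w_2(TM),x\rangle=0$ for all $x\in H_2(M;\bbZ/2)$. Every such class is represented by a map $f\colon S\to M$ from a closed (possibly nonorientable) surface. Since $\dim M\ge5=2\cdot2+1$, we can perturb $f$ to an embedding, and then $\langle w_2(TM),x\rangle=\langle w_2(TM|_S),[S]\rangle=0$. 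Hence $M$ is spin. The step needing a little care is the representability of mod-$2$ classes by surfaces; it holds in degree $2$ for mod-$2$ homology (Thom), or one can argue directly via cellular $2$-chains.

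\emph{Part (ii).} Let $p\colon\widetilde M\to M$ be the universal cover, so $w_2(T\widetilde M)=p^*w_2(TM)$. Since $\widetilde M$ is simply connected, $H_1(\widetilde M)=0$. Universal coefficients then give $H^2(\widetilde M;\bbZ/2)\cong\operatorname{Hom}(H_2(\widetilde M;\bbZ),\bbZ/2)$, and Hurewicz gives $H_2(\widetilde M;\bbZ)\cong\pi_2(\widetilde M)\cong\pi_2(M)$. Therefore $\widetilde M$ is nonspin if and only if some $\phi\colon S^2\to M$ satisfies $\langle w_2(TM),\phi_*[S^2]\rangle\ne0$; here we use that $\phi$ lifts to $\widetilde M$ and $p^*$ is compatible with evaluation. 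Again $\dim M\ge5$ lets us perturb $\phi$ to an embedding without changing its homotopy class. By the first paragraph, $TM|_{S^2}$ is nontrivial exactly when this evaluation is nonzero. Both directions follow: a nontrivial restriction to an embedded $S^2$ gives a class in $\pi_2(M)$ detecting $w_2(T\widetilde M)\ne0$, and conversely.

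The main point to check is the first-paragraph identification of triviality with vanishing $w_2$ in the rank range $n\ge3$. Everything else is routine general position together with Hurewicz and universal coefficients.
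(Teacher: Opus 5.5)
Your proof is correct and follows essentially the same route as the paper. For (i), you detect $w_2(TM)$ by pairing with mod-$2$ classes represented by embedded surfaces (Thom, plus Whitney using $\dim M\ge5$); for (ii), you use $H_1(\widetilde M)=0$, Hurewicz and universal coefficients to represent a detecting class by a $2$-sphere, which you push down to $M$ and perturb to an embedding. The only cosmetic difference is that you state up front that an oriented bundle of rank $\ge3$ over a $2$-complex is trivial iff $w_2=0$, via $\pi_2(\bso(n))\cong\bbZ/2$; the paper instead lifts the classifying map to the $3$-connected space $\bspin$ for that step.
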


\begin{remark}\label{rem:criterion-for-spin-and-nonspin-dimension-4}
  If $\dim(M)=4$, the statement is still correct if \emph{embedded} surfaces and spheres are replaced by \emph{immersed} surfaces and spheres.
\end{remark}

\begin{proof}[Proof of \cref{prop:criterion-for-spin-and-nonspin}]
  Let $w_2(TM)\in H^2(M;\bbZ/2)$ denote the second Stiefel--Whitney class of $TM$.
  Since the pairing of cohomology and homology with $\bbZ/2$-coefficients is perfect, we have that $w_2(TM)=0$ if and only if $\scpr{w_2(TM);x}=0$ for every class $x\in H_2(M;\bbZ/2)$.
  \begin{enumerate}
    \item If $M$ is spin, and $\iota\colon S\embeds M$ is an embedding of a surface, then $w_i(TM|_S)=\iota^*w_i(TM)=0$ for $i=1,2$.
    Therefore, the classifying map $S\to\bo$ of $TM|_{S}$ lifts along $\bspin\to\bo$, which is $2$-connected if $d\ge2$.
    Since $S$ is 2-dimensional, the map $S\to\bspin$ factors through a point up to homotopy, for example by \cite[Lemma 4.6]{hatcher_at}, and thus $TM|_S$ is trivial.

    \medskip

    On the other hand, every class $x\in H_2(M;\bbZ/2)$ is represented by $\iota_*[S]$ for a smooth map $\iota\colon S\embeds M$ from a surface $S$ (see \cite[Théorème III.2]{thom_quelques-proprietes-globales-des-varietes-differentiable}).
    This map can be chosen to be an embedding by the Whitney embedding theorem, because $\dim(M)\ge5$ (resp. an immersion if $\dim(M)=4$).
    Since
    \[\scpr{w_2(TM),\iota_*[S]} = \scpr{\iota^*w_2(TM),[S]} = \scpr{w_2(TM|_S),[S]} = 0.\]
    By the above argument, we deduce that $w_2(TM)=0$.

    \medskip

    \item Let $\pi\colon \widetilde{M}\to M$ be the universal cover.
    If $M$ is totally nonspin, there is an embedding of a surface $\tilde \iota\colon S\embeds \widetilde M$ such that 
    \[\scpr{w_2(T\widetilde{M}|_{\tilde\iota(S)}),[S]} = \scpr{w_2(T\widetilde{M}),\tilde\iota_*[S]}\not=0\]
    by part (i).
    Since $\widetilde M$ is simply connected, the Hurewicz-homomorphism $\pi_2(\widetilde M)\to H_2(\widetilde M;\bbZ)$ is an isomorphism and $H_2(\widetilde{M};\bbZ/2)\cong H_2(\widetilde{M};\bbZ)\otimes\bbZ/2$ by the universal coefficient theorem.
    Therefore, we may assume that $S$ is a $2$-sphere.
    Since $T\widetilde{M} \cong \pi^*TM$ and covering maps are local diffeomorphisms, $\iota\coloneqq\pi\circ \tilde\iota$ is an immersion and by the Whitney embedding theorem it can be changed by a homotopy to an embedding $j\colon S^2\embeds M$.
    Since 
    \[w_2(TM|_{j(S^2)}) = w_2(j^*TM) = w_2(\iota^*TM) = w_2(\tilde \iota^*T\widetilde M) = w_2(T\widetilde M|_{\tilde\iota(S^2)})\not=0,\]
    we find that $j^*TM = TM|_{j(S^2)}$ is nontrivial.

    \medskip
    
    On the other hand, if there exists some embedding $\iota\colon S^2\embeds M$ such that $\iota^*TM$ is nontrivial, we can lift this to an embedding $\tilde\iota\colon S^2\embeds \widetilde M$ by the lifting property for coverings.
    Since
    \[T\widetilde M|_{\tilde\iota(S^2)}\cong\tilde\iota^*T\widetilde M = \tilde\iota^*\pi^*TM = \iota^*TM = TM|_{\iota(S^2)},\]
    we observe that $T\widetilde M|_{\tilde\iota(S^2)}$ is nontrivial and by (i), $\widetilde M$ is not spin.\qedhere
  \end{enumerate}
\end{proof}

Finally, we introduce the notion of \emph{extendable tangential $2$-type}, which will play a central role in the succeeding section.

\begin{defn}\label{def:extendable-tangential-2-type}
  Let $M$ be a manifold and let $\Sigma\subset M$ be a hypersurface.
  Let $\theta\colon B\to\bo$ be the tangential $2$-type of $\Sigma$.
  We say, that the \emph{tangential $2$-type of $\Sigma$ extends to $M$}, if there exist $\theta$-structures $\ell_\Sigma\colon \Sigma\to B$ and $\ell_M\colon M\to B$ such that
  \begin{enumerate}
    \item $\ell_\Sigma$ is $2$-connected
    \item $\ell_\Sigma$ is homotopic to $\ell_M\circ\iota$, where $\iota\colon \Sigma\to M$ denotes the inclusion.
  \end{enumerate}
\end{defn}

\begin{remark}\label{rem:on-extendable-tangential-2-type}
  \begin{enumerate}
    \item If $\Sigma\subset\partial M$ is a component such that $\Sigma\embeds M$ is $2$-connected, then the tangential $2$-type of $\Sigma$ extends to $M$.
    This follows from \cite[Lemma 4.6]{hatcher_at}, because $(M,\Sigma)$ is homotopy equivalent to a (relative) $CW$-complex with no cells of dimension $0$, $1$ and $2$ and $B\to\bo$ is $2$-coconnected.
    \item 
    If $\Sigma=\partial M$ and the tangential $2$-type of $\Sigma$ extends to $M$, then $M$ is a $\theta$-nullbordism of $\Sigma$ and $M^\op\amalg M$ is $\theta$-bordant to $\Sigma\times[0,1]$, see \cite[Proposition 3.25]{actionofmcg}.
    \item 
    If the tangential $2$-type of $\Sigma$ extends to $M$, then the map $\pi_0(\Sigma)\to\pi_0(M)$ induced by the inclusion is injective.
    Assuming that $M$ is connected, this forces $\Sigma$ to be connected as well.
  \end{enumerate}
\end{remark}

The following proposition provides example cases, where the tangential $2$-type of a hypersurface $\Sigma$ extends to $M$. 

\begin{prop}\label{prop:tangential-2-type-extends}
  Let $M$ be an oriented manifold and let $\iota\colon\Sigma\embeds M$ be an embedding of a connected hypersurface with trivial normal bundle such that the induced map $i\coloneqq \iota_\ast\colon \pi_1(\Sigma)\to\pi_1(M)$ is split-injective.
  Assume one of the following holds:
  \begin{enumerate}
    \item $M$ is spin
    \item $\Sigma$ is totally nonspin
    \item $M$ is almost spin and the map $Bi^*\colon \cohomology^2\bigl(B\pi_1(M);\bbZ/2\bigr)\to \cohomology^2\bigl(B\pi_1(\Sigma);\bbZ/2\bigr)$ is an isomorphism.
  \end{enumerate}
  Then the tangential $2$-type of $S$ extends to $M$.
\end{prop}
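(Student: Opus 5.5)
The plan is to use the explicit models of tangential $2$-types from \cref{ex:tangential-two-types-of-manifolds}, applied to $\Sigma$, and in each case build $\ell_\Sigma$ and $\ell_M$ by hand. Write $\pi=\pi_1(\Sigma)$, $G=\pi_1(M)$, and let $r\colon G\to\pi$ be a retraction with $r\circ i=\id$. The map $\iota\colon\Sigma\to M$ has trivial normal bundle, so $T\Sigma\oplus\underline\bbR\cong\iota^*TM$. Hence the stable classifying map of $T\Sigma$ equals $\tau_M\circ\iota$, where $\tau_M$ is the stable classifying map of $TM$. Let $u_\Sigma\colon\Sigma\to B\pi$ and $u_M\colon M\to BG$ be the classifying maps. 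Then $u_M\circ\iota\simeq Bi\circ u_\Sigma$, and so
\[Br\circ u_M\circ\iota\simeq B(r\circ i)\circ u_\Sigma = u_\Sigma.\]
This single homotopy drives the whole argument.

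In case (i), $M$ is spin, and so is $\Sigma$. Its tangential $2$-type is $\bspin\times B\pi\to\bo$. Fix a spin structure $s_M\colon M\to\bspin$ lifting $\tau_M$, and take $s_\Sigma:=s_M\circ\iota$, which lifts the stable tangent bundle of $\Sigma$. Set $\ell_M=(s_M,Br\circ u_M)$ and $\ell_\Sigma=(s_\Sigma,u_\Sigma)$. Then $\ell_\Sigma\simeq\ell_M\circ\iota$ by the homotopy above. It remains to check that $\ell_\Sigma$ is $2$-connected.
\begin{itemize}
\item On $\pi_0$ and $\pi_1$ it is an isomorphism, since $\bspin$ is $3$-connected and $u_\Sigma$ induces the identity on $\pi_1$.
\item On $\pi_2$ it is surjective, because $\pi_2(\bspin\times B\pi)=0$.
\end{itemize}

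In case (ii), $\Sigma$ is totally nonspin and its $2$-type is $\bso\times B\pi$. The same construction works with orientations: $M$ is oriented, and $\Sigma$ is oriented as a two-sided hypersurface. Here $\pi_2(\bso)=\bbZ/2$ must be hit, and the total nonspin property gives this. By \cref{prop:criterion-for-spin-and-nonspin}(ii) there is an embedded $2$-sphere in $\Sigma$ on which $T\Sigma$ is nontrivial. Its image in $\pi_2(\bso)$ is therefore the generator, so $\ell_\Sigma$ is surjective on $\pi_2$. (If $\dim\Sigma=4$, use immersed spheres as in \cref{rem:criterion-for-spin-and-nonspin-dimension-4}.) In both cases $\theta$ is $2$-coconnected by construction, so these models are indeed the tangential $2$-types.

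Case (iii) is the main obstacle. Here $M$ is almost spin with class $m_M\in H^2(BG;\bbZ/2)$, and $u_M^*m_M=w_2(TM)$. First note that $\Sigma$ is not totally nonspin: the universal cover of $\Sigma$ maps to the universal cover of $M$, which is spin, so $w_2$ of the universal cover of $\Sigma$ vanishes. So $\Sigma$ is spin or almost spin. Set $m_\Sigma:=(Bi)^*m_M$; then
\[u_\Sigma^*m_\Sigma=\iota^*u_M^*m_M=w_2(T\Sigma).\]
Since $(Bi)^*$ is an isomorphism and $m_M\neq0$ (because $M$ is not spin), we get $m_\Sigma\neq0$. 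Moreover $u_\Sigma^*\colon H^2(B\pi;\bbZ/2)\to H^2(\Sigma;\bbZ/2)$ is injective, as $u_\Sigma$ is $2$-connected. Hence $w_2(T\Sigma)\neq0$, so $\Sigma$ is almost spin, and its $2$-type $B_\Sigma$ is the pullback of $w_2$ and $m_\Sigma$.

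Next we need $Br$ to be compatible with these classes. From $(Bi)^*(Br)^*=\id$ and the bijectivity of $(Bi)^*$ we get $(Br)^*=((Bi)^*)^{-1}$, hence $(Br)^*m_\Sigma=m_M$. Choose a homotopy $H\colon m_\Sigma\circ Br\simeq m_M$. Then $(\text{orientation},\,Br\circ u_M,\ \text{homotopy } w_2\simeq m_M u_M \text{ composed with } H)$ defines $\ell_M\colon M\to B_\Sigma$. We set $\ell_\Sigma:=\ell_M\circ\iota$.

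The delicate part is checking that $\ell_\Sigma$ is $2$-connected. On $\pi_1$ this holds since its composite to $B\pi$ is $u_\Sigma$, and $\pi_1(B_\Sigma)\cong\pi$. For $\pi_2$, use the fibration $K(\bbZ/2,1)\to B_\Sigma\to\bso\times B\pi$: one finds $\pi_2(B_\Sigma)=\ker\bigl(\pi_2(\bso)\to\pi_2 K(\bbZ/2,2)\bigr)=0$, since $w_2$ is an isomorphism on $\pi_2$. So surjectivity on $\pi_2$ is automatic. The homotopy data in the pullback can be adjusted if needed, since different choices of $H$ differ by classes in $H^1(\Sigma;\bbZ/2)$ and do not affect connectivity.
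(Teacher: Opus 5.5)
Your proof is correct and follows essentially the same construction as the paper: in each case you take $\ell_M=(\text{tangential lift},\,Br\circ u_M)$, get compatibility from $Br\circ u_M\circ\iota\simeq u_\Sigma$, and in case (iii) use $(Br)^*=((Bi)^*)^{-1}$ to obtain $(Br)^*m_\Sigma=m_M$. Your deviations are all refinements rather than a different route: the covering argument that $\Sigma$ is not totally nonspin, the explicit use of total nonspinness to make $\ell_\Sigma$ surjective onto $\pi_2(\bso)$ in case (ii) (where the paper just says \enquote{verbatim}), and, in case (iii), defining $\ell_\Sigma:=\ell_M\circ\iota$ and checking $2$-connectivity via $\pi_2(B_\Sigma)=0$ — this last point is more careful than the paper, which asserts $\ell_M\circ\iota\simeq(o|_\Sigma,u_\Sigma)$ without addressing the homotopy datum in the pullback.
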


\begin{proof}\leavevmode
  \begin{enumerate}
    \item Let $f\colon M\to\bspin$ be a lift of the classifying map for the stable tangent bundle of $M$.
    Since the normal bundle of $\Sigma$ is trivial, the restriction of $f$ to $\Sigma$ is a lift of the classifying map for the stable tangent bundle of $\Sigma$.
    Therefore, $\Sigma$ is spin and the tangential $2$-type of $\Sigma$ is given by 
    \[\theta\colon \bspin\times B\pi_1(\Sigma)\to\bo.\]
    Let $u_\Sigma\colon \Sigma\to B\pi_1(\Sigma)$ be the map inducing the identity on fundamental groups (which is unique up to homotopy) and define $\ell_\Sigma = (f|_\Sigma, u_\Sigma)\colon\Sigma\to\bspin\times B\pi_1(\Sigma)$, which is an isomorphism on $\pi_0$ and $\pi_1$ and surjective on $\pi_2$, hence $2$-connected.

    \medskip

    Let $s\colon \pi_1(M)\to\pi_1(\Sigma)$ be a split of the map $i$ induced by the inclusion and let $u\colon M\to B\pi_1(M)$ be the map which induces the identity on fundamental groups.
    We define $\ell_M\coloneqq (f,Bs\circ u)$.
    Since maps from a connected $CW$-complex $X$ into $B\pi_1(M)$ are determined uniquely up to homotopy by the homomorphism $\pi_1(X)\to \pi_1(M)$ (\cite[Proposition 1B.9]{hatcher_at}) and $u \circ \iota$ and $Bi\circ u_\Sigma$ both induce the map $i=\iota_\ast$ on fundamental groups, they are homotopic.
    We thus obtain
    \[Bs\circ u \circ \iota \sim Bs\circ Bi\circ u_\Sigma \sim u_\Sigma,\]
    because $Bs$ is a split of $B\iota$ as well.
    Therefore, $\ell_\Sigma$ and $\ell_M\circ\iota$ are homotopic.

    \medskip

    \item If $\Sigma$ is totally nonspin, then so is $M$ by \cref{prop:criterion-for-spin-and-nonspin}.
    Choosing an orientation on $M$, we get a map $f\colon M\to\bso$, and as the normal bundle of $\Sigma$ is trivial, the restriction $f|_\Sigma\colon \Sigma\to\bso$ yields an orientation on $\Sigma$.
    We define $u_\Sigma$, $u$, $\ell_\Sigma$ and $\ell_M$ as in the spin-case with $\bspin$ replaced by $\bso$ and the proof is finished verbatim.

    \medskip

    \item First, we show that $\Sigma$ is almost spin, too.
    Note, that $\Sigma$ cannot be totally nonspin, for if it was, there would be an immersed $2$-sphere in $N_i$ with non-trivial normal bundle by \cref{prop:criterion-for-spin-and-nonspin} and \cref{rem:criterion-for-spin-and-nonspin-dimension-4}.
    This immersed $2$-sphere would be homotopic to an embedded $2$-sphere inside the collar by the Whitney embedding theorem, which still has non-trivial normal bundle.
    But $M$ was assumed to be almost spin and hence contains no embedded $2$-sphere with non-trivial normal bundle.

    \medskip

    In order to show that $\Sigma$ is nonspin, we choose a split $s\colon \pi_1(M)\to \pi_1(\Sigma)$ of $i$.
    Since $M$ is almost spin we have $w_2(TM)=u^*m\not=0$ for $u\colon M\to B\pi_1(M)$ and $m \in \cohomology^2(B\pi_1(M);\bbZ/2)$ as in \cref{ex:tangential-two-types-of-manifolds}.
    Let $u_\Sigma\colon \Sigma\to B\pi_1(\Sigma)$ be the map inducing the identity on fundamental groups. 
    As before, we have $u\circ\iota\sim Bi\circ u_\Sigma$.
    Hence,
    \begin{equation}\label{eq:w2}
      w_2(T\Sigma) = \iota^*w_2(TM) = \iota^*u^*m = u_\Sigma^* Bi^* m.
    \end{equation}
    The induced map $u_\Sigma^*$ is injective on second cohomology by \cite[p. 713]{kreck_surgery-and-duality}.
    Since $m\not=0$ and $Bi^*$ is an isomorphism by assumption, $w_2(T\Sigma)$ is nonzero and hence $\Sigma$ is almost spin.
    Note, that by the defining property $u_\Sigma^*m_\Sigma=w_2(T\Sigma)$, we have $Bi^*m = m_\Sigma\in \cohomology^2(B\pi_1(\Sigma);\bbZ/2)$.

    \medskip

    Now, let $\theta$ be the tangential $2$-type of $\Sigma$.
    A $\theta$-structure on $M$ is given by an orientation $o\colon M\to\bso$ and a map $v\colon M\to B\pi_1(\Sigma)$ such that $m_\Sigma\circ v$ is homotopic to $w_2\circ o$ for $m_\Sigma\in \cohomology^2(B\pi_1(\Sigma);\bbZ/2)$ as above and $w_2\in \cohomology^2(\bso;\bbZ/2)$ the universal second Stiefel--Whitney class.
    Let $o$ be the orientation on $M$ coming from its almost spin structure and let $o|_\Sigma$ be the induced orientation on $\Sigma$.
    We define $\ell_\Sigma\coloneqq (o|_{\Sigma}, u_\Sigma)$.
    By \eqref{eq:w2} this defines a $\theta$-structure on $\Sigma$.

    \medskip

    In order to define an extension of $\ell_\Sigma$ to $M$, let  $u\colon M\to B\pi_1(M)$ be the map inducing the identity on fundamental groups and let $s\colon\pi_1(M)\to \pi_1(\Sigma)$ be a split of $i = \iota_\ast$.
    Since $Bi^*$ is an isomorphism on $\cohomology^2(-;\bbZ/2)$, the map $Bs^*$ is a two-sided inverse for $Bi^*$ on $\cohomology^2(-;\bbZ/2)$.\footnote{Since $i$ is assumed to be split-injective we would only get $Bs\circ Bi=\id$ and hence $Bi^*Bs^*=\id$ without the additional assumption on the induced map in second cohomology.}
    Therefore, we have
    \[w_2(TM) = u^*m = u^*Bs^*Bi^*m = (Bs\circ u)^*m_\Sigma = v^*m_\Sigma,\]
    where we used $Bi^*m = m_\Sigma$.
    We obtain a $\theta$-structure $\ell_M\coloneqq (o,Bs\circ u)$ on $M$ which satisfies $\ell_M\circ\iota \sim \ell_\Sigma$, because $Bs\circ u\circ\iota$ and $u_\Sigma$ both induce the identity on fundamental groups.\qedhere
  \end{enumerate}
\end{proof}

\subsection{Surgery principles for scalar and mean curvature}

With the language of tangential structures at hand, we can translate the Gromov--Lawson--Schoen--Yau surgery theorem for positive scalar curvature into the cobordism setting by adhering to the handle cancellation technique from the proof of the $h$- and $s$-cobordism theorem (see for example \cite{smale_on-the-structure-of-manifolds}, \cite{milnor_lectures_on_the_h_cobordism_theorem} or \cite{wall_geometric-connectivity}). 
This gives the following result:
\begin{theorem}[{\cite[Theorem 1.5]{ebertfrenck},\cite{gl80a},\cite{schoenyau_classical}}]\label{thm:cobordism-theorem-for-psc}
  Let $M_0,M_1$ be closed manifolds of dimension at least $5$.
  Let furthermore $W\colon M_0\leadsto M_1$ be a cobordism from $M_0$ to $M_1$ such that the tangential $2$-type of $M_1$ extends to $W$.
  Then, if $M_0$ admits a psc-metric $g_0$, there is a psc-metric $g_1$ on $M_1$ as well.
\end{theorem}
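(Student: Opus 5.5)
Here $W$ is a cobordism with $\partial W = M_0^\op\amalg M_1$. The plan is to decompose $W$ into handles attached to $M_0$, and then use the extendability of the tangential $2$-type of $M_1$ to remove all handles of index $\le 2$ relative to $M_1$. After that, the Gromov--Lawson--Schoen--Yau surgery theorem carries $g_0$ to $g_1$, handle by handle.

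\textbf{Step 1: setup.} Let $\theta\colon B\to\bo$ be the tangential $2$-type of $M_1$, with $2$-connected $\ell_{M_1}$ and an extension $\ell_W$ such that $\ell_W|_{M_1}\sim\ell_{M_1}$. Since $\ell_{M_1}$ is $2$-connected and factors through $W$ up to homotopy, the inclusion $M_1\embeds W$ is surjective on $\pi_0$ and $\pi_1$. It is also injective on $\pi_0$, and $\ell_W$ is $2$-connected onto $B$.

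\textbf{Step 2: surgery below the middle.} I would do $\theta$-surgery on the interior of $W$, relative to the boundary, to make $M_1\embeds W$ $2$-connected. By Step~1 it is already surjective on $\pi_1$. Kernel elements of $\pi_1(M_1)\to\pi_1(W)$ can be killed by attaching $2$-handles to $W$ along $M_1\times[0,1]$. The cleaner route is the standard one: modify $W$ by surgeries in its interior. I would kill $\ker(\pi_1(W)\to\pi_1(B))$ by $1$-surgeries on embedded circles, whose normal bundles are trivial since $\theta$ factors through $\bso$ along loops. I would then kill $\ker(\pi_2(W)\to\pi_2(B))$ by $2$-surgeries; the $\theta$-structure makes the needed normal bundles trivial. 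This is possible since $\dim W\ge6$, so these spheres embed disjointly. The result is a $\theta$-cobordism $W'$ with $\ell_{W'}$ $2$-connected. Since $\ell_{M_1}$ is also $2$-connected, $M_1\embeds W'$ is an isomorphism on $\pi_1$ and surjective on $\pi_2$, i.e.\ $2$-connected. Interior surgeries do not change the boundary, so $W'$ is still a cobordism $M_0\leadsto M_1$.

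\textbf{Step 3: handle cancellation.} Since $(W',M_1)$ is $2$-connected and $\dim W'\ge 6$, Wall's geometric connectivity / handle-trading argument from the $h$-cobordism proof gives a handle decomposition of $W'$ relative to $M_1$ with no handles of index $0,1,2$. Dually, relative to $M_0$ there are only handles of index $\le d-2$, where $d=\dim W'-1=\dim M_i$. So $M_1$ is obtained from $M_0$ by a sequence of surgeries of codimension $\ge3$ in $M_0$-dimension terms. Each such surgery preserves psc by Gromov--Lawson / Schoen--Yau, which transports $g_0$ to a psc metric $g_1$ on $M_1$.

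\textbf{Main obstacle.} I expect the main obstacle to be Step~3: the handle trading must work without simple-connectivity assumptions and with arbitrary $\pi_1$. Cancelling $0$- and $1$-handles is standard. Trading $2$-handles for $3$-handles needs the $\pi_1$-isomorphism and $\pi_2$-surjectivity together with the Whitney trick, and this uses $\dim W'\ge6$. I would follow the argument in \cite[Theorem 1.5]{ebertfrenck} and Wall's paper here rather than redo it.
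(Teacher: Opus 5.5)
Your outline matches what the paper intends: interior surgery to make $M_1\embeds W$ $2$-connected, Wall's handle cancellation (this is where $\dim W\ge 6$ is used), then Gromov--Lawson/Schoen--Yau for the remaining codimension-$\ge3$ surgeries. Step~3 is fine. The gap is in the $\pi_2$-part of Step~2. Your deduction that $\pi_2(M_1)\to\pi_2(W')$ is surjective needs $\ell_{W'}$ to be \emph{injective} on $\pi_2$, which is why you kill all of $\ker\bigl(\pi_2(W)\to\pi_2(B)\bigr)$. That kernel need not be finitely generated over $\bbZ[\pi_1(W)]$, and in general the target cannot be reached. If $M_1$ is spin, then $\pi_2(B)=\pi_2(\bspin\times B\pi)=0$, so you would need a compact $W'$ with $\pi_1(W')\cong\pi=\pi_1(M_1)$ and $\pi_2(W')=0$. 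The cellular chains of $\widetilde{W'}$ would then give a resolution of $\bbZ$ by finitely generated free $\bbZ[\pi]$-modules up to degree $3$, so $\pi$ would be of type $FP_3$. This fails, for example, for Stallings' finitely presented group, which is $\pi_1$ of closed spin manifolds of every dimension $\ge5$. Step~1 also has the directions reversed. From $\ell_{M_1}\simeq\ell_W\circ\iota$ you get that $\iota$ is \emph{injective} on $\pi_0,\pi_1$ and $\ell_W$ is \emph{surjective} on $\pi_0,\pi_1,\pi_2$. So $\pi_1(M_1)\to\pi_1(W)$ has no kernel, and $\ell_W$ is not $2$-connected until you kill $\ker(\pi_1(W)\to\pi_1(B))$ and join superfluous components. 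That part of your Step~2 is fine.

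The repair is to kill only the relative group $\pi_2(W,M_1)$; this is what the references cited in \cref{rem:extension-of-surgery-principle} do. Once $\ell_W$ is bijective on $\pi_0,\pi_1$, so is $\iota$. Hence by \cref{thm:wall-geometric-connectivity} there are no $0$- and $1$-handles relative to $M_1$, and $\pi_2(W,M_1)\cong H_2(\widetilde W,\widetilde{M_1})$ is a quotient of the free $\bbZ[\pi]$-module on the finitely many $2$-handles. Each generator lifts to $\pi_2(W)$, because the boundary map to $\pi_1(M_1)$ vanishes by injectivity. Changing the lift by an element of $\pi_2(M_1)$ does not change its relative class, and since $\ell_{M_1}$ is surjective on $\pi_2$ you can arrange that it maps to $0\in\pi_2(B)$. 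The resulting disjointly embedded $2$-spheres have trivial normal bundle, and the $\theta$-structure extends over the traces. So finitely many interior $2$-surgeries yield $\pi_2(W',M_1)=0$, i.e.\ $M_1\embeds W'$ is $2$-connected, and your Step~3 then applies.
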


\begin{remark}\label{rem:extension-of-surgery-principle}
  This theorem has a few improvements:
  \begin{enumerate}
    \item It also holds for manifolds $M_0$, $M_1$ with boundary if the cobordism is trivial on the boundary: In this case one can again perform handle cancellation (see \cite{wall_geometric-connectivity}) and all surgeries take place away from the boundary.
    Since the construction in \cite{gl80a} only changes the metric locally near the surgery embeddings (see also \cite[Theorem 6.1]{FrenckHankeHirsch2026} for a quantitative statement), the metric remains unchanged in a neighborhood of the boundary.
    \medskip
    \item If the cobordism $W$ contains a trivial cobordism $A\times[0,1]$ for a closed set $A\subset M_0$, we can assume that the psc-metrics $g_0$ and $g_1$ agree on $A$.
    Again, this follows because the construction in \cite{gl80a,FrenckHankeHirsch2026} is local.
    \medskip
    \item If the inclusion $M_1\embeds W$ is already $2$-connected, then $W$ admits a psc-metric which extends $g_0$ on $M_0$ and is of product type near both boundaries.
    This is a result of Gajer \cite{gajer}.
    It can also be derived from the main result of \cite{ebertfrenck} or \cite{chernysh_on_the_homotopy_type_of_the_space}, see \cref{prop:extension-to-cobordism}.
    
    \medskip

    It is always possible to make the embedding $M_1\embeds W$ $2$-connected by performing surgeries on the interior of $W$, see \cite[Appendix B]{hebestreitjoachim_twisted-spin-cobordism-and-positive-scalar-curvature} or \cite[Proposition 6.3]{ebertfrenck}.
    \end{enumerate} 
\end{remark}

Let us consider the special case of doubles of cobordisms.
Let $M\colon S_0\leadsto S_1$ be a $\theta$-cobordism for $\theta\colon B\to \bo$ the tangential $2$-type of $S_0$.
Then there is a $\theta$-structure on $M^\op\colon S_1\leadsto S_0$ such that the double $M\cup_{S_1} M^\op$ is $\theta$-cobordant to $S_0\times[0,1]$ relative to the boundary (see \cite[Proposition 3.25]{actionofmcg}).
Hence, we obtain the following:

\begin{prop}\label{prop:psc-from-doubles-to-cylinders}
  Let $M\colon S_0\leadsto S_1$ be a cobordism of dimension at least $5$.
  Assume that the tangential $2$-type $\theta$ of $S_0$ extends to $M$ and let $W\colon S_0\leadsto S_0$ be $\theta$-cobordant to the double $M\cup_{S_1} M^\op$ relative to the boundary.
  If $W$ admits a psc-metric $g$, then $S_0\times[0,1]$ admits a psc-metric $g'$ such that $g$ and $g'$ agree in a neighborhood of the boundary.
\end{prop}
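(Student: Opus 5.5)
The plan is to apply \cref{thm:cobordism-theorem-for-psc} in its relative form (\cref{rem:extension-of-surgery-principle} (i) and (ii)) to a cobordism from $W$ to $S_0\times[0,1]$. Both are compact manifolds with boundary $S_0\amalg S_0$. By hypothesis there is a $\theta$-cobordism $V_1$ rel boundary between $W$ and the double $\double_{S_1}M\coloneqq M\cup_{S_1}M^\op$. By \cite[Proposition 3.25]{actionofmcg}, as recalled just before the statement, there is a $\theta$-cobordism $V_2$ rel boundary between $\double_{S_1}M$ and $S_0\times[0,1]$. For this second cobordism the hypothesis that $\theta$ extends to $M$ is used: it equips $M$, and hence $M^\op$ and the double, with $\theta$-structures that restrict to the given $2$-connected structure on $S_0$.

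First, we glue $V_1$ and $V_2$ along $\double_{S_1}M$. This gives a cobordism $V\colon W\leadsto S_0\times[0,1]$ relative to the boundary, meaning $V$ contains a trivial cobordism $(S_0\amalg S_0)\times[0,1]$ along the boundary. It carries a $\theta$-structure restricting to the given $\theta$-structures on both ends. In particular, the $\theta$-structure on $S_0\times[0,1]$ is $2$-connected, since $S_0\times[0,1]\simeq S_0$ and the structure on $S_0$ is $2$-connected by \cref{def:tangential-2-types}. Hence the tangential $2$-type of $S_0\times[0,1]$, which is $\theta$, extends to $V$ in the sense of \cref{def:extendable-tangential-2-type}. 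One should check the connectivity: if $S_0$ is disconnected one works componentwise, or notes that \cref{thm:cobordism-theorem-for-psc} only needs a $\theta$-structure on $V$ restricting to a $2$-connected one on the target end.

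Second, we apply \cref{thm:cobordism-theorem-for-psc} with $M_0=W$, carrying the psc-metric $g$, and $M_1=S_0\times[0,1]$. By \cref{rem:extension-of-surgery-principle} (i), the handle cancellation works relative to the boundary, so all surgeries happen in the interior. By (ii), taking $A$ to be a collar neighborhood of $\partial W$ inside which $V$ is a product, the resulting psc-metric $g'$ on $S_0\times[0,1]$ agrees with $g$ on $A$, hence near the boundary. The dimension hypothesis of at least $5$ is what \cref{thm:cobordism-theorem-for-psc} requires.

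The main obstacle is the relative version of the cobordism theorem. We must make sure that the surgery and handle-cancellation arguments of \cite{ebertfrenck} go through when the cobordism is a product near $\partial W$, so that every handle lies in the interior and away from the collar $A$. We also need the Gromov--Lawson construction to leave $g$ untouched near $\partial$. I would cite \cite{wall_geometric-connectivity} for relative handle cancellation (creating handles of index at least $3$ relative to the target end via the $2$-connectivity), and the locality from \cite{gl80a,FrenckHankeHirsch2026} to keep the metric fixed near the boundary, exactly as stated in \cref{rem:extension-of-surgery-principle}.
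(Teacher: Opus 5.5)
Your proposal is correct and matches the paper's argument. The paper gives no separate proof; it derives the proposition (``Hence, we obtain the following'') directly from \cref{thm:cobordism-theorem-for-psc} in its relative form, \cref{rem:extension-of-surgery-principle} (i)--(ii), applied to the composite $\theta$-cobordism from $W$ through the double $M\cup_{S_1}M^\op$ to $S_0\times[0,1]$ given by \cite[Proposition 3.25]{actionofmcg}. You also check that the $\theta$-structure on $S_0\times[0,1]$ is $2$-connected, so that its tangential $2$-type extends to the glued cobordism; this is exactly the step the paper leaves implicit.
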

\noindent Now, we turn to the corresponding surgery principle for positive mean curvature.

\begin{theorem}[{\cite[Theorem 3.1]{lawsonmichelsohn_embedding-and-surrounding-with-positive-mean-curvature}}]\label{thm:lawson-michelsohn}
  Let $\Sigma$ be a (normally oriented) strictly mean convex hypersurface in a Riemannian manifold $M$.
  If $\Sigma'$ is obtained from $\Sigma$ by attaching a $p$-handle inside $M$ to the positive side of $\Sigma$ for $p\le \dim(M)-2$, then $\Sigma'$ can be chosen to be strictly mean convex.
\end{theorem}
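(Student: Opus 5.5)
The statement is \cite[Theorem 3.1]{lawsonmichelsohn_embedding-and-surrounding-with-positive-mean-curvature}. If I had to reprove it, I would build $\Sigma'$ explicitly as a thin tube around the core of the handle, smoothly joined to $\Sigma$. Let $n+1=\dim(M)$. The handle is determined by an embedded disc $D^p\subset M$ which lies on the positive side of $\Sigma$, satisfies $D^p\cap\Sigma=\partial D^p$, and meets $\Sigma$ orthogonally. After an isotopy we may assume this, and also that $g$ is of the form $\dt^2+h_t$ in Fermi coordinates near $\Sigma$. Then $\Sigma'$ is obtained by removing from $\Sigma$ a small tubular neighborhood of $\partial D^p$ (a copy of $S^{p-1}\times D^{n-p+1}$) and gluing in the boundary of a small tubular neighborhood of $D^p$ (a copy of $D^p\times S^{n-p}$). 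The orientation is chosen so that the unit normal points away from the core. The codimension of the core is $n+1-p\ge 2$, so the normal spheres $S^{n-p}$ have positive dimension; this is exactly where the hypothesis $p\le\dim(M)-2$ enters.

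\textbf{Step 1: the tube.} Let $T_r$ be the boundary of the $r$-tubular neighborhood of the core away from $\Sigma$. In normal exponential coordinates around the core, $T_r$ has $n-p$ principal curvatures of order $1/r+O(1)$, coming from the round normal spheres, and $p$ principal curvatures that are $O(1)$ uniformly in $r$. The $O(1)$ bounds come from the second fundamental form of the core and the curvature of $g$. Its mean curvature is therefore at least $(n-p)/r-C$, which is positive for $r$ small because $n-p\ge1$.

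\textbf{Step 2: the neck.} Near $\partial D^p$ we work in coordinates $(x,t,\rho,\omega)$. Here $x\in S^{p-1}\cong\partial D^p$, $t\ge0$ is the distance to $\Sigma$, $\rho$ is the distance in $\Sigma$ to $\partial D^p$, and $\omega\in S^{n-p}$ is the normal direction. The region is in fact $S^{p-1}\times\{t\}\times D^{n-p+1}$. We take $\Sigma'$ to be the hypersurface $\{(x,t,\rho,\omega):(t,\rho)\in\gamma\}$, where $\gamma$ is a curve in the quarter plane. The curve $\gamma$ starts along the $\rho$-axis at $\rho=\rho_0$ (giving $\Sigma$ itself), bends upward, and ends on the vertical line $\rho=r$ (giving the tube). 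Its mean curvature then splits as a sum of four contributions:
\begin{enumerate}
  \item the geodesic curvature $\kappa_\gamma$ of the profile curve;
  \item $(n-p)\,\dot t/\rho$ from the normal spheres of radius $\rho$, up to an $O(1)$ error, where $\dot t\ge0$ is the vertical component of the unit tangent;
  \item a term bounded below by $\min(H_\Sigma,0)-C$ from the $S^{p-1}$-directions and from the original second fundamental form of $\Sigma$;
  \item error terms of size $O(1)$ coming from the metric.
\end{enumerate}
The second contribution and the positivity of $H_\Sigma$ on the initial flat part are what keep everything positive.

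\textbf{Main obstacle: choosing $\gamma$.} The bending of $\gamma$ from horizontal to vertical has to be done so that $\kappa_\gamma$ never makes the total negative. I would first try the Gromov--Lawson style bending. Take $\gamma$ as a graph $t=f(\rho)$ over a very short interval with $f$ convex, so the bend curves the correct way relative to the outward normal (away from the core). With this orientation $\kappa_\gamma$ has the favorable sign and only needs to be controlled where it could be negative. At the start of the bend, $\dot t$ is small; there the positive $H_\Sigma$ and the smallness of the bending angle take care of the errors. Once $\dot t$ is bounded below, the term $(n-p)\dot t/\rho$ with $\rho$ small dominates all $O(1)$ errors. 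If the naive sign turns out wrong, the fallback is to spread the bending over a long interval. Then $|\kappa_\gamma|$ can be made arbitrarily small compared to $(n-p)\dot t/\rho$, as in the proof of the Gromov--Lawson surgery theorem. The result is a smooth strictly mean convex $\Sigma'$ isotopic to the handle attachment, as required.
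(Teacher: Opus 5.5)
\textbf{There is a genuine gap: the sign of the bending curvature is wrong.} The paper does not prove this statement; it quotes it from Lawson--Michelsohn. Your architecture is the natural one: a thin tube around the core, joined to $\Sigma$ by a neck whose profile $\gamma$ lies in the $(\rho,t)$-quarter plane. But the decisive step fails.

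Near $\partial D^p$ the enclosed region is $\{t<0\}\cup\{\rho<r\}$. This is the complement of the quadrant $\{\rho>r,\ t>0\}$, so you are smoothing a \emph{concave} corner. The tangent of $\gamma$ turns from $-\partial_\rho$ to $\partial_t$, and the outward normal $\nu=\sin\theta\,\partial_\rho+\cos\theta\,\partial_t$ (with $\dot t=\sin\theta$) lies on the side toward which $\gamma$ curves. A convex graph $t=f(\rho)$ is exactly such a fillet. Its curvature vector points along $\nu$ and therefore \emph{decreases} $H$. A net turning of $\pi/2$ in the unfavorable direction is forced, since the neck is saddle-shaped like a catenoid. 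So your primary plan cannot work, and the fallback has to carry the whole proof.

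As written, the fallback does not do so, for two reasons:
\begin{enumerate}
  \item Requiring $|\kappa_\gamma|\le\varepsilon(n-p)\dot t/\rho$ cannot even start the bend. With $\theta=0$ initially, Gronwall forces $\theta\equiv0$.
  \item \enquote{Spreading the bending over a long interval} is not available, because the bend has to take place at small $\rho$.
\end{enumerate}

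\textbf{What is needed.} Pull $\rho$ back from $\Sigma$ via Fermi coordinates and let $s$ be arclength of $\gamma$. Then
$H\ge-\theta'+\cos\theta\,(H_\Sigma-Ct)+\sin\theta\,\bigl(\tfrac{n-p}{\rho}(1-Ct)-C\bigr)$ with $\theta'=|\kappa_\gamma|$.

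At $\theta=0$ the only positive term is $H_\Sigma$. So the bend must begin with $\theta'<\min_\Sigma H_\Sigma$, and you must check that the metric errors vanish there: they are $O(t+\sin\theta)$, not $O(1)$. This is exactly where strict mean convexity enters. Your item (iii), with lower bound $\min(H_\Sigma,0)-C=-C$, throws it away.

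Once a small angle $\theta_0$ is reached at a radius $\rho_0$ with $\tfrac{n-p}{4\rho_0}>C$, impose $\theta'=\tfrac{n-p}{2}\tfrac{\sin\theta}{\rho}$. Together with $\rho'=-\cos\theta$ this integrates to $\rho^{(n-p)/2}\sin\theta=\mathrm{const}$. Hence $\gamma$ becomes vertical at the positive radius $\rho_1=\rho_0(\sin\theta_0)^{2/(n-p)}$. It gains height only $O(\rho_0)$, which keeps the $O(t)$ errors under control. This is where the tube $T_{\rho_1}$ is attached. Tapering $\theta'$ to $0$ near $\theta=\pi/2$ is harmless, since $\sin\theta\approx1$ there.

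The room for the bend is logarithmic in $\rho$, and it exists exactly because $n-p\ge1$. This quantitative step is the missing core of your argument.
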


The following corollary translates this into an external statement. Its proof is very similar to the proof of \cite[Theorem 5.1]{lawsonmichelsohn_embedding-and-surrounding-with-positive-mean-curvature}. Note that there are no assumptions on the dimension of $W$ here.

\begin{cor}\label{lem:cobordism-principle-for-hpsc-bounding}
  Let $M\colon S\leadsto \Sigma$ be a cobordism that admits a metric of positive scalar curvature and strictly mean convex boundary $\Sigma$.
  If $M'\colon\Sigma\leadsto\Sigma'$ is another cobordism that only consists of handles of codimension at least $2$, then $M\cup_\Sigma M'$ also admits a psc-metric with strictly mean convex boundary $\Sigma'$.
\end{cor}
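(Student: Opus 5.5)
Decompose $M'$ into handles attached to $\Sigma\times[0,1]$ and attach them one at a time, in increasing order of index, using \cref{thm:lawson-michelsohn} at each step. First fix a metric $g$ on $M$ with positive scalar curvature and strictly mean convex boundary $\Sigma$. By the collar neighborhood theorem we can choose a collar $\Sigma\times(-\varepsilon,0]\subset M$. After a small perturbation, each hypersurface $\Sigma\times\{-t\}$ is still strictly mean convex for small $t$, with respect to the normal pointing outward, i.e. towards $\Sigma$. By hypothesis, $M'$ is obtained from $\Sigma\times[0,1]$ by attaching handles $D^p\times D^{d-p}$ with $p\le d-2$, where $d=\dim(M)$.

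The idea is to realise $M\cup_\Sigma M'$, up to diffeomorphism, as a compact region inside an enlarged Riemannian manifold with positive scalar curvature. Every handle of $M'$ is attached along $\Sigma$ (or its successors), and by the handle-reordering argument we may attach them successively. The core $D^p$ has codimension $d-p\ge 2$, so we can thicken by a psc-extension. Concretely, extend $M$ by the open collar $\Sigma\times[0,\delta)$ and a thin neighborhood of the attaching cores, equipped with any metric extending $g$. Positive scalar curvature is an open condition, so it persists on a small neighborhood $U$ of $M\cup(\text{cores})$.

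A cleaner alternative avoids this extension: shrink instead. Let $\Sigma_0=\Sigma\times\{-\varepsilon/2\}$ inside $M$. Embed the cores of the handles of $M'$ into the region $\Sigma\times(-\varepsilon/2,0]$ on the positive side of $\Sigma_0$. This is possible because their attaching spheres lie in $\Sigma$ and the cores are low-dimensional relative to the collar, after bending them into the collar. Inductively, apply \cref{thm:lawson-michelsohn} to obtain a strictly mean convex hypersurface $\Sigma_k$ that bounds the region $M_{\le -\varepsilon/2}\cup(\text{first }k\text{ handles})$. The final hypersurface $\Sigma'$ then bounds a compact domain diffeomorphic to $M\cup_\Sigma M'$. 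The ambient metric restricted to this domain has positive scalar curvature, since it is the restriction of $g$, and strictly mean convex boundary. This argument never needs a dimension restriction, matching the remark in the statement.

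The main obstacle is the geometric embedding step: realising the handles of $M'$ inside the fixed manifold $(M,g)$ so that they are attached on the positive side of the current hypersurface. A handle is not a priori a subset of $M$. The naive collar $\Sigma\times(-\varepsilon,0]$ is a product, and the thickened handle cannot be embedded in it, since $M\cup M'$ is generally not diffeomorphic to a subset of $M$. I would therefore use the first approach to supply the ambient space. Glue to $(M,g)$ the model manifold $M'\cup$(an open neighborhood), with any metric extending $g$ smoothly across $\Sigma$. Then restrict to a small open neighborhood $U$ of $M\cup\bigcup(\text{cores})$ on which the scalar curvature is still positive. Finally, apply \cref{thm:lawson-michelsohn} inside $U$ with arbitrarily thin handles, as in the proof of \cite[Theorem 5.1]{lawsonmichelsohn_embedding-and-surrounding-with-positive-mean-curvature}. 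The thinness is exactly what that theorem provides, and the result is diffeomorphic to $M\cup_\Sigma M'$ because $M'$ deformation retracts onto $\Sigma\times[0,1]$ union the cores.
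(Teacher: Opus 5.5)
Your overall architecture matches the paper's. You produce an ambient Riemannian manifold of positive scalar curvature containing $M$ and the handles of $M'$, attach thin handles to the strictly mean convex hypersurface $\Sigma$ on its positive side using \cref{thm:lawson-michelsohn}, and restrict the metric to the region enclosed by the new hypersurface.

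The gap is in how you obtain the ambient psc-metric. You take \emph{any} metric on $M\cup_\Sigma M'$ (or on $M$ plus a neighbourhood of the cores) extending $g$. You then argue that, since positive scalar curvature is an open condition, it persists on a small neighbourhood $U$ of $M$ together with the cores. Openness only propagates positivity from the set where it already holds, i.e.\ from $M$ into a thin collar $\Sigma\times[0,\delta)$. The cores are $p$-discs reaching a definite distance into $M'$. In general they cannot be pushed rel boundary into $M\cup\Sigma\times[0,\delta)$; think of a $2$-handle killing a loop that is essential in $M$. Along their interiors an arbitrary extension can have negative scalar curvature. Making the handles thin does not help: the thin handle produced by \cref{thm:lawson-michelsohn} still contains the whole core, so the enclosed region has psc only if the ambient metric is psc along the entire core. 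Your remark that ``the core has codimension $\ge 2$, so we can thicken by a psc-extension'' is exactly the claim that needs proof. It is not purely local, since the psc-metric near the core must be matched with $g$ near the attaching region. For handles of index $d-2$ the Gromov--Lawson surgery construction, which needs codimension $\ge 3$, is not available either.

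The missing ingredient is Gromov's relative $h$-principle for the open, diffeomorphism-invariant relation of positive scalar curvature. Since $M\cup_\Sigma M'$ arises from $M$ by attaching handles of index $\le d-2<d$, the metric $g$ (psc on a neighbourhood of $M$) extends to a psc-metric on all of $M\cup_\Sigma M'$. This is the first step of the paper's proof. With this ambient metric in place, the rest of your argument goes through and coincides with the paper's. One minor point: the enclosed region is diffeomorphic to $M\cup_\Sigma M'$ because a thin neighbourhood of $\Sigma$ together with the cores is isotopic to the original handle attachment, by uniqueness of tubular neighbourhoods. A deformation retraction alone only gives a homotopy equivalence.
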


\begin{proof}
  By Gromov's (relative) $h$-principle, there exists a psc-metric on $M\cup_\Sigma M'$ which extends the given psc-metric on $M$.
  Note, that not this extended metric may not have mean convex boundary.
  Since $M'$ only consists of handles of index at most $2$, \cref{thm:lawson-michelsohn} implies that there is an embedding $M'\embeds M\cup_\Sigma M'$ whose incoming boundary agrees with $\Sigma$ and whose outgoing boundary is strictly mean convex.
  Furthermore, the union of $M$ and this embedded copy of $M'$ has positive scalar curvature (as a codimension $0$ submanifold of $M\cup M'$) and strictly mean convex boundary.
\end{proof}

The condition on the codimension of handles can again be translated using handle cancellation.
Since we only want to cancel $0$- and $1$-handles, this also works for $5$-dimensional manifolds by the following result of Wall.

\begin{theorem}[{\cite[Theorem 3]{wall_geometric-connectivity}}]\label{thm:wall-geometric-connectivity}
  Let $M\colon S_0\leadsto S_1$ be a cobordism such that the inclusion $S_1\embeds M$ is $r$-connected for $r\le \dim(M)-4$.
  Then $M$ contains no handles of index $\ge \dim(M)-r$.\footnote{\cite[Thorem 3]{wall_geometric-connectivity} states that there are no handles of index $\le r$ if $S_0\embeds M$ is $r$-connected.
  We use this statement for $S_1$ instead and turning $M$ upside down turns an $s$-handle into a $(\dim(M)-s)$-handle which yields this statement.}
\end{theorem}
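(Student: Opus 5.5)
The plan is to reduce the statement to Wall's original formulation, which concerns the incoming end, by turning the cobordism upside down. Write $d=\dim(M)$ and let $\overline M\colon S_1\leadsto S_0$ denote $M$ read backwards. By hypothesis the inclusion of the incoming boundary $S_1\embeds\overline M$ is $r$-connected with $r\le d-4$. Wall's theorem, in the form stated in the footnote, then provides a handle decomposition of $\overline M$ relative to $S_1$ with no handles of index $\le r$.

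The second step is to transport this decomposition back to $M$. Each $s$-handle $D^s\times D^{d-s}$ of $\overline M$, attached along $S^{s-1}\times D^{d-s}$, becomes a $(d-s)$-handle of $M$ relative to $S_0$, attached along $D^s\times S^{d-s-1}$. This is the usual dual handle decomposition, obtained from a Morse function $f$ by replacing it with $-f$, which sends critical points of index $s$ to critical points of index $d-s$. Since $\overline M$ has only handles of index $s\ge r+1$, $M$ has only handles of index $d-s\le d-r-1$. In other words, $M$ has no handles of index $\ge d-r$, as claimed.

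If Wall's theorem is not simply quoted, the substantive work is its proof, which I would sketch by induction on the handle index $k\le r$. First, $0$-handles are cancelled against $1$-handles; this is possible because $\pi_0(S_1)\to\pi_0(\overline M)$ is surjective. Next, suppose all handles of index $<k$ have been eliminated. A $k$-handle is then traded for a $(k+2)$-handle as follows. By $k$-connectivity of the pair, the core of the $k$-handle, together with a disk in the lower level, is homotopic rel boundary into $S_1\times[0,1]$. One introduces a cancelling $(k+1,k+2)$-pair and slides the $(k+1)$-handle so that its attaching sphere meets the belt sphere of the $k$-handle geometrically once. The $k$-handle then cancels against the $(k+1)$-handle, leaving only the new $(k+2)$-handle.

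The main obstacle is turning the homotopy into an embedding that intersects the belt sphere exactly once. This is where the bound $r\le d-4$ enters. For $k\le d-4$, embeddings of $(k+1)$-disks in the relevant $(d-1)$-dimensional level sets can be arranged by general position, since $2(k+1)<d-1$ fails only in borderline cases that are controlled by Whitney-trick-free arguments for $k\le 1$. For $k\ge2$, the Whitney trick is available because the level sets are simply connected at that stage. It is exactly this step that lets the argument also work for $d=5$ with $r=1$, which is the case used later in the paper.
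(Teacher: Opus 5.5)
Your reduction is exactly the paper's argument, which is given in its footnote: apply Wall's Theorem 3 to $M$ read as a cobordism starting at $S_1$, then dualize, so that handles of index $s\ge r+1$ become handles of index $\dim(M)-s\le \dim(M)-r-1$. The sketch of Wall's proof is not needed for this, but as written it is inaccurate for $k\ge 2$. The level sets need not be simply connected, since in the paper's applications $S_1$ has arbitrary fundamental group. So the trading step needs Wall's version of the Whitney trick with $\mathbb{Z}[\pi_1]$-valued intersection numbers, based on surjectivity of $\partial_{k+1}$ in the chain complex of the universal cover, not the simply connected Whitney trick.
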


Combining \cref{lem:cobordism-principle-for-hpsc-bounding} and \cref{thm:wall-geometric-connectivity} we obtain the following consequence, see {\cite[Theorem 5.1]{lawsonmichelsohn_embedding-and-surrounding-with-positive-mean-curvature}}.

\begin{cor}\label{cor:boundaries-with-hpsc-metrics}
  Let $M$ be a manifold with boundary of dimension $d\ge5$ such that $\partial M\embeds M$ is $1$-connected.
  Then $M$ admits a metric of positive scalar curvature with strictly mean convex boundary.
\end{cor}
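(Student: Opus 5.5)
We treat $M$ as a cobordism $M\colon\emptyset\leadsto\partial M$, read upside down. The goal is to show that it consists only of handles of index at most $d-2$ when built from $\partial M$. Then \cref{lem:cobordism-principle-for-hpsc-bounding} finishes the proof, starting from a thin collar $\partial M\times[0,\varepsilon]$ on which a suitable metric is already available.

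First I would reduce to the case where $M$ and $\partial M$ are connected. Components of $M$ without boundary do not occur in the relevant sense, and $1$-connectedness of $\partial M\embeds M$ forces each component of $M$ to have connected boundary, since $\pi_0(\partial M)\to\pi_0(M)$ is a bijection. Fix a component and apply \cref{thm:wall-geometric-connectivity} with $r=1$; this is allowed because $r=1\le d-4$ for $d\ge5$. Read from the $\partial M$ side, $M$ then admits a handle decomposition with no handles of index $\ge d-1$. Turning this around, $M$ is obtained from a collar of $\partial M$ by attaching handles of index at most $d-2$ to $\partial M\times\{1\}$, proceeding inward and ending at the closed side, which is empty. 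Equivalently, $M$ is built from the empty set by handles of index at least $2$, and each such handle has codimension at least $2$ relative to the outgoing boundary $\partial M$.

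Next I would set up the starting metric. The piece built first is a union of $0$-handles, and also of $1$-handles when read in the original direction. What I actually want is a small piece $M_0\subset M$ with a psc-metric and strictly mean convex boundary such that $M\setminus M_0$ is a cobordism $\partial M_0\leadsto\partial M$ built only from handles of index at most $d-2$. Taking the dual decomposition, the $0$-handles of $M$ (from the empty side) are disks. A round hemisphere cap, or simply a small Euclidean ball $D^d$ with its round metric, has psc (after a slight bending, e.g. a region in a round sphere) and strictly mean convex boundary. So $M_0$ is a disjoint union of such balls. All remaining handles of the decomposition from the empty side have index between $1$ and $d-2$, because Wall's theorem excludes top indices: indices $\ge d-1$ from the $\partial M$ side correspond to indices $\le 1$ from the empty side. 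Strictly, $0$-handles must remain, and we then need no $1$-handles. So I would apply Wall to exclude dual indices $d-1$ and $d$, which correspond to $1$-handles and $0$-handles. Cancelling $0$-handles against $1$-handles, one $0$-handle always remains for a connected $M$; the appropriate formulation is to start from a single $0$-handle and have only handles of index $2,\dots,d-2$ afterwards. This is exactly what the relative version yields, and it fits the convention of Lawson--Michelsohn's Theorem~5.1.

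Finally, apply \cref{lem:cobordism-principle-for-hpsc-bounding} with $M_0=D^d$ carrying a psc-metric with strictly mean convex boundary, and $M'=M\setminus\mathring{D}^d$, which consists of handles of index $p$ with $2\le p\le d-2$. The result is a psc-metric on $M$ with strictly mean convex boundary.

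The main obstacle is the bookkeeping in the handle cancellation. One has to check that Wall's theorem, as applied with the footnoted dualization, removes exactly the $1$-handles (beyond a single $0$-handle per component) and not more. One also has to check that the codimension condition in \cref{thm:lawson-michelsohn}, namely $p\le d-2$, matches handles attached to the positive (outward) side. I would verify this directly from the dual decomposition.
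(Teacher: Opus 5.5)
Your strategy is the paper's: apply \cref{thm:wall-geometric-connectivity} with $r=1$ to $\partial M\embeds M$, start from small round balls, and finish with \cref{lem:cobordism-principle-for-hpsc-bounding}. However, the index bookkeeping you flag as the main obstacle is inverted, and as written the argument relies on decompositions that do not exist.

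In your second paragraph you conclude that $M$ is obtained from a collar of $\partial M$ by handles of index at most $d-2$, or equivalently from $\emptyset$ by handles of index at least $2$. Both are false. Relative to $\partial M$, every component needs a $d$-handle, as $M=D^d$ already shows. Built from $\emptyset$, a nonempty $M$ needs $0$-handles.

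You then try to repair this by cancelling $0$-handles against $1$-handles, and assert that $M\setminus D^d$ consists of handles of index $2\le p\le d-2$. This fails whenever $\pi_1(M)\neq 1$, because one $0$-handle plus handles of index $\ge 2$ always gives a simply connected manifold. For example, $M=S^1\times D^{d-1}$ satisfies the hypothesis, since $S^1\times S^{d-2}\embeds M$ is bijective on $\pi_0$ and an isomorphism on $\pi_1$. Yet every handle decomposition of it from $\emptyset$ contains $1$-handles.

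Your opening idea of starting from a collar of $\partial M$ also points the wrong way. \cref{lem:cobordism-principle-for-hpsc-bounding} attaches handles to the outgoing boundary of the piece that already carries the metric, so the construction must grow towards $\partial M$.

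The correct reading is as follows. \cref{thm:wall-geometric-connectivity}, in the form stated in the paper with $S_0=\emptyset$, $S_1=\partial M$ and $r=1\le d-4$, says directly that $M\colon\emptyset\leadsto\partial M$ has no handles of index $d-1$ or $d$. Equivalently, relative to $\partial M$ there are no $0$- and $1$-handles.

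The $1$-handles seen from the $\emptyset$ side survive, since they carry $\pi_1(M)$, but nothing requires removing them. \cref{thm:lawson-michelsohn} needs only $p\le d-2$, which includes $p=1$.

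So let $M_0$ be the union of all $0$-handles, each given the metric of a geodesic ball of radius less than $\pi/2$ in a round sphere; this has positive scalar curvature and strictly mean convex boundary. Then $M\setminus M_0$ consists of handles of index $1,\dots,d-2$, and \cref{lem:cobordism-principle-for-hpsc-bounding} applies. No cancellation, no reduction to a single $0$-handle, and no reduction to connected $M$ is needed. With this correction your argument is exactly the paper's.
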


We can use \cref{lem:cobordism-principle-for-hpsc-bounding} and \cref{thm:wall-geometric-connectivity} to show that every (nullbordant) oriented manifold is the mean convex boundary of some Riemannian manifold of positive scalar curvature, see \cref{thm:every-manifold-bounds-hpsc-metrics}.
This can be rephrased to the statement that every (nullbordant) oriented manifold carries a Riemannian metric which admits a fill-in with positive scalar and mean curvature.

\section{Criteria for the doubling conjecture}\label{sec:criterion}
In this section we present two criteria (\cref{lem:doubling-conjecture-criterion-one-component} and \cref{lem:criterion-for-non-connected-boundaries}) and an inheritance result (\cref{lem:inheritance-for-doubling-conjecture}) for the \cref{doubling-conjecture}.
These form the basis for the proof of \cref{main:doubling-conjecture-spin-and-nonspin} and \cref{main:doubling-conjecture-almost-spin}.

\begin{lemma}\label{lem:doubling-conjecture-criterion-one-component}
  Let $M$ be a manifold of dimension at least $5$ such that the tangential $2$-type of $\partial M$ extends to $M$.
  Then the \cref{doubling-conjecture} holds for $M$.
\end{lemma}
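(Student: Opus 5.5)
One direction is classical: if $M$ carries a psc-metric with mean convex boundary, its double carries a psc-metric by \cite[Corollary 4.3]{Baer-Hanke_boundary-conditions-for-scalar-curvature}. So the work lies in the converse. Assume $g_{\double}$ is a psc-metric on $\double M = M\cup_{\partial M} M^\op$. Note first that, by \cref{rem:on-extendable-tangential-2-type}~(iii), extendability forces $\pi_0(\partial M)\to\pi_0(M)$ to be injective, so we may treat each component of $M$ separately. We may therefore assume that $M$ is connected, and then $\partial M$ is connected as well.

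The first step is to build a model $M'$ with a good metric. Let $\theta\colon B\to\bo$ be the tangential $2$-type of $\partial M$, and let $\ell_M\colon M\to B$ be a $\theta$-structure extending the $2$-connected structure $\ell_{\partial M}$. Perform surgeries on the interior of $M$, compatible with $\theta$ (below the middle dimension, as in \cite[Appendix B]{hebestreitjoachim_twisted-spin-cobordism-and-positive-scalar-curvature} or \cite[Proposition 6.3]{ebertfrenck}), so that $\ell$ becomes $2$-connected on the result $M'$. Concretely, we kill the kernel on $\pi_1$ and on $\pi_2$ by $1$- and $2$-surgeries, which is possible since $\ell_{\partial M}$ is already $2$-connected. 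Since $\ell_{\partial M}=\ell_{M'}\circ\iota$ and both of these maps are $2$-connected, the inclusion $\partial M=\partial M'\embeds M'$ is $2$-connected, in particular $1$-connected. Because $\dim M\ge5$, \cref{cor:boundaries-with-hpsc-metrics} then gives a psc-metric $g'$ on $M'$ with strictly mean convex boundary, and we may take it to be of product type after a collar, or at least to have a fixed germ near $\partial M'$. Moreover, $M'$ is $\theta$-bordant to $M$ relative to the boundary, since both are $\theta$-nullbordisms of $\partial M$ obtained from one another by surgery.

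The second step glues $M'$ into the double. By \cref{rem:on-extendable-tangential-2-type}~(ii), $M^\op\amalg M$ is $\theta$-bordant to $\partial M\times[0,1]$ relative to the boundary. Replacing the second copy of $M$ by the $\theta$-bordant $M'$, we find that $W\coloneqq M^\op\cup_{\partial M} M'$ is $\theta$-cobordant to the double of the cobordism $M\colon\partial M\leadsto\emptyset$ viewed appropriately. We use \cref{prop:psc-from-doubles-to-cylinders} with $S_0=\partial M$, applied to the cobordism $M^\op\colon\emptyset\leadsto \partial M$ regarded relative to $\partial M$ (equivalently, apply \cref{thm:cobordism-theorem-for-psc} with \cref{rem:extension-of-surgery-principle}~(i),(ii)). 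This lets us transport a psc-metric across this bordism. The cleanest route is to view $\double M\amalg M' = M\cup (M^\op\amalg M')$, where $M^\op\amalg M'$ is a manifold with boundary $\partial M\amalg\partial M$ that is $\theta$-bordant rel boundary to $\partial M\times[0,1]$. The target $\partial M\times[0,1]$ has tangential $2$-type $\theta$ and its boundary inclusion is $2$-connected. Start with the psc-metric $g_{\double}|_{M^\op}\amalg g'$, after cutting $\double M$ slightly inside $M$ along a collar, so that the cut is along a copy of $\partial M$ in $M$. The relative version of \cref{thm:cobordism-theorem-for-psc} then yields a psc-metric on $\partial M\times[0,1]$ which agrees with $g_{\double}$ near one end and with $g'$ near the other. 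Gluing this to $(M\setminus\text{collar}, g_{\double})$ gives a psc-metric on $M\cup_{\partial M}(\partial M\times[0,1])\cong M$ that coincides with $g'$ near the new boundary, which is therefore strictly mean convex.

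The main obstacle is the bookkeeping in the relative cobordism step. We must check that the $\theta$-structures on $M^\op$ and $M'$ glue to a $\theta$-bordism rel boundary onto the cylinder. We must also check that the bordism is trivial near $A=$ the two boundary copies, so that \cref{rem:extension-of-surgery-principle}~(i),(ii) keep the metric fixed there. Finally, the $2$-type of the target, $\partial M\times[0,1]$, must extend to the bordism. For this last point we would use that $\ell_{\partial M}$ is $2$-connected and that \cite[Proposition 3.25]{actionofmcg} provides the required bordism. The dimension hypothesis $\dim\partial M\ge4$ is exactly what \cref{thm:cobordism-theorem-for-psc} needs, since the cylinder has dimension $\dim M\ge5$.
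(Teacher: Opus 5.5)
Your proposal is correct and follows the paper's proof. You do $\theta$-surgery on the interior to get a $2$-connected $M'$, apply \cref{cor:boundaries-with-hpsc-metrics} to get a strictly mean convex psc-metric $g'$ on $M'$, and then use the rel-boundary bordism $M^\op\amalg M'\sim\partial M\times[0,1]$ (that is, \cref{prop:psc-from-doubles-to-cylinders}) to produce a collar metric joining $g_\double|_M$ to the boundary germ of $g'$.

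One small slip: from $\ell_{\partial M}=\ell_{M'}\circ\iota$ with both maps $2$-connected you only get that $\iota$ is bijective on $\pi_0$ and $\pi_1$, not surjective on $\pi_2$, and no $2$-surgeries are needed. This is harmless, since $1$-connectivity is all the corollary uses.
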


\begin{remark}
  Considering one component of $M$ at a time, it suffices to study the \cref{doubling-conjecture} for connected manifolds.
  As mentioned in \cref{rem:on-extendable-tangential-2-type}, the assumption from \cref{lem:doubling-conjecture-criterion-one-component} then dictates that $\partial M$ is connected, as well.
  In conjunction with \cref{lem:inheritance-for-doubling-conjecture} or by employing \cref{lem:criterion-for-non-connected-boundaries} we obtain an approach to the \cref{doubling-conjecture} for manifolds with non-connected boundary.
\end{remark}

\begin{proof}[Proof of \cref{lem:doubling-conjecture-criterion-one-component}]
  Let $g_\double$ be a psc-metric on the double $\double M$ of $M$.
  Let $\theta\colon B\to\bo$ be the tangential $2$-type of $\Sigma$ and let $\ell_\Sigma$, $\ell_M$ be as in \cref{def:extendable-tangential-2-type}.
  The manifold $M$ is then $\theta$-cobordant to a manifold $M'$ whose $\theta$-structure $\ell'\colon M'\to B$ is $2$-connected.
  Since $\ell_\Sigma\colon\Sigma\to B$ is $2$-connected, it follows that the inclusion $\Sigma\embeds M'$ induces an isomorphism on $\pi_0$ and $\pi_1$ and is hence $1$-connected.
  Therefore, $M'$ admits a metric $g'$ of positive scalar curvature with strictly mean convex boundary by \cref{cor:boundaries-with-hpsc-metrics}.
  The disjoint union of metrics $g_\double\amalg g'$ on $\double M\amalg M'$ has positive scalar curvature and mean convex boundary.
  \begin{figure}[ht]
    \begin{tikzpicture}
      \node(0) at (0,0) {\includegraphics[width=.9\textwidth]{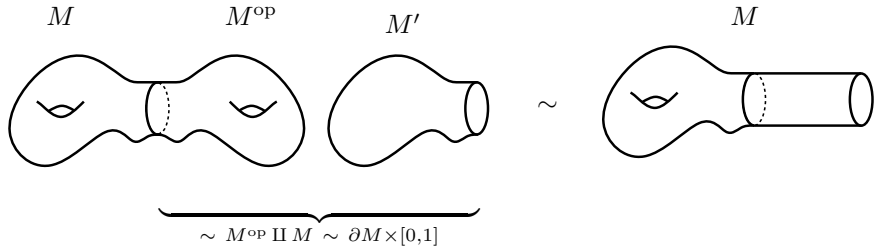}};
      \node(1) at (-5,1.2) {$M$};
      \node(1) at (4,1.2) {$M$};
      \node(2) at (-2.5,1.2) {$M^\op$};
      \node(3) at (-.5,1.1) {$M'$};
      \node(4) at (-1.6,-1.6) {$\underbrace{\quad\qquad\quad\qquad\qquad\qquad\qquad}_{\sim\  M^\op\,\amalg\, M\ \sim\  \partial M\times[0,1]}$};
      \node(6) at (1.4,0) {$\sim$};
    \end{tikzpicture}
    \caption{Constructing a psc-metric on $M$ with strictly mean convex boundary.}\label{fig:finish_m1}
  \end{figure}
  Since $M'$ is $\theta$-cobordant to $M$, $M^\op\amalg M'$ is $\theta$-cobordant to the double $M^\op\amalg M$ of $M$ relative to the boundary.
  By \cref{prop:psc-from-doubles-to-cylinders} there exists a psc-metric $g''$  on $\partial M\times[0,1]$ which agrees with $g_\double|_{M^\op}$ in a neighborhood of the incoming boundary and its outgoing boundary is mean convex.
  Thus, we can extend $g''$ by $g_\double|_M$ to a psc-metric on $M\cup \partial M\times[0,1]\cong M$ with mean convex boundary, see \cref{fig:finish_m1}.
\end{proof}

If the dimension of $M$ is at least $6$, we can adhere to \cref{thm:cobordism-theorem-for-psc} instead of \cref{cor:boundaries-with-hpsc-metrics} and derive the existence of a psc-metric on $M$ of product type.

\begin{cor}\label{cor:doubling-for-extendable-tangential-two-type}
  Let $M$ be a manifold of dimension at least $6$ with boundary $\partial M$ such that the tangential $2$-type of $\partial M$ extends to $M$.
  If $\double M$ admits a psc-metric, then $M$ admits a psc-metric that is of product type near the boundary.
\end{cor}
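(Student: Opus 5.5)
The plan is to run the proof of \cref{lem:doubling-conjecture-criterion-one-component} again, but to replace the mean-convexity step (\cref{cor:boundaries-with-hpsc-metrics}) with the cobordism theorem for psc, \cref{thm:cobordism-theorem-for-psc}. This works on the boundary $\partial M$ itself, which has dimension at least $5$ because $\dim M\ge6$. That dimension count is exactly where the extra dimension is needed.

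\emph{Step 1.} Let $g_\double$ be a psc-metric on $\double M=M\cup_{\partial M}M^\op$. Let $\theta$, $\ell_\Sigma$ and $\ell_M$ be as in \cref{def:extendable-tangential-2-type}, with $\Sigma=\partial M$. By \cref{rem:on-extendable-tangential-2-type}~(ii), $M$ is a $\theta$-nullbordism of $\Sigma$, and $M^\op$ is a $\theta$-nullbordism of $\Sigma$ as well. Removing a small open disc from the interior of $M^\op$ gives a $\theta$-cobordism $W\colon S^{d-1}\leadsto\Sigma$. The tangential $2$-type of $\Sigma$ extends to $W$, since the $\theta$-structure on $W$ is the restriction of $\ell_M$ on the $M^\op$-side.

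\emph{Step 2.} I use the psc-metric $g_\double|_{M^\op}$ directly. More cleanly, view the double as $M^\op\cup_\Sigma M$ and cut along a hypersurface parallel to $\Sigma$ inside the $M$-half. Then apply \cref{rem:extension-of-surgery-principle}~(ii) to the cobordism from $\Sigma$ (the incoming collar) to itself given by the double relative to the boundary. The concrete route I would take is as follows. By \cref{prop:psc-from-doubles-to-cylinders}, applied with $W=M^\op\amalg M$ and the $\theta$-cobordism to $\Sigma\times[0,1]$ relative to the boundary, there is a psc-metric $g''$ on $\Sigma\times[0,1]$. It agrees with $g_\double|_{M^\op}$ near $\Sigma\times\{0\}$ and with $g_\double|_M$ near $\Sigma\times\{1\}$.

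Next, restrict $g''$ to a slice: near $\Sigma\times\{1/2\}$ I want product form. To arrange this, I first obtain a psc-metric $h$ on $\Sigma$ from \cref{thm:cobordism-theorem-for-psc}. The input is a $\theta$-cobordism from $\double M$ (which carries $g_\double$) to $\Sigma$ whose $2$-type extends, obtained by taking $\double M\times[0,1]$ and attaching $M\times\{1\}$ appropriately. Alternatively, I take $M^\op$ with a disc removed and apply \cref{rem:extension-of-surgery-principle}~(iii) after surgeries making $\Sigma\embeds W$ $2$-connected. This gives a psc-metric on $M^\op$ of product type $\dt^2+h$ near $\Sigma$.

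\emph{Step 3.} I will use the relative version \cref{rem:extension-of-surgery-principle}~(iii) directly on a collar cobordism. Let $W'=M\cup_\Sigma(\Sigma\times[0,1])$, viewed as a cobordism from $\double M$ with the $M^\op$-part removed. Put the metric $g_\double$ on a neighborhood of $M$ in $\double M$. Then, after surgeries in the interior making $\Sigma\embeds W'$ $2$-connected (possible by the $\theta$-structure and \cite[Proposition 6.3]{ebertfrenck}), Gajer's theorem extends $g_\double|_{M}$ to a psc-metric on the enlarged cobordism that is of product type near the new boundary copy of $\Sigma$. Finally, I use \cref{prop:psc-from-doubles-to-cylinders} as in \cref{lem:doubling-conjecture-criterion-one-component} to replace the modified part by $\Sigma\times[0,1]$, which yields a psc-metric on $M\cup\Sigma\times[0,1]\cong M$ of product type near $\partial M$.

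The main obstacle is Step 3. The difficulty is arranging that the surgered piece is $\theta$-cobordant relative to the boundary to the piece it replaces, so that \cref{prop:psc-from-doubles-to-cylinders} applies while the metric near the outgoing end remains a product. I would handle this by keeping all surgeries in the interior, away from the collar, which the locality statements in \cref{rem:extension-of-surgery-principle} guarantee.
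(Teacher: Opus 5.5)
Your overall strategy, rerunning the proof of \cref{lem:doubling-conjecture-criterion-one-component} with the psc cobordism theorem in place of \cref{cor:boundaries-with-hpsc-metrics}, is the paper's. The execution has a genuine gap, though: none of your attempts actually produces a product collar.

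\begin{enumerate}
\item Applying \cref{prop:psc-from-doubles-to-cylinders} to $M^\op\amalg M$ with $g_\double$ on both pieces only returns a metric on $\Sigma\times[0,1]$ that agrees with $g_\double$ near both ends. That is no better than a collar of $\Sigma$ in $(\double M,g_\double)$, and the proposition gives no control over a middle slice.
\item There is no ``$\theta$-cobordism from $\double M$ to $\Sigma$'', since these have dimensions $d$ and $d-1$.
\item Surgery on $M^\op\setminus D$ followed by Gajer's theorem gives a product-type psc-metric on the \emph{surgered} manifold, not on $M^\op$. If it did so on $M^\op$, the hypothesis on $\double M$ would be superfluous. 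This fails for $M=T^d\setminus D^d$: the $2$-type $\bspin$ of $S^{d-1}$ extends, but $\double M\cong T^d\#T^d$ is enlargeable and carries no psc-metric.
\item In Step 3, Gajer's theorem (\cref{rem:extension-of-surgery-principle}~(iii), \cref{prop:extension-to-cobordism}) extends a psc-metric given on the closed \emph{incoming} end of a cobordism. $g_\double|_M$ is not of that kind: it is not a product near $\Sigma$, and $g_\double|_\Sigma$ need not have positive scalar curvature. Moreover, surgeries in the interior of $M$ remove the very region where you want to keep $g_\double|_M$, so locality does not resolve the obstacle you identify.
\end{enumerate}

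The missing idea is to decouple the auxiliary manifold from $g_\double$ and insert it as the \emph{outgoing} piece in \cref{prop:psc-from-doubles-to-cylinders}.
\begin{enumerate}
\item Perform interior surgeries on $M$ to get $M'$, $\theta$-cobordant to $M$ relative to $\Sigma$, with $\Sigma\embeds M'$ $2$-connected.
\item Apply Gajer's theorem to $M'\setminus D\colon S^{d-1}\leadsto\Sigma$ and cap off with a torpedo metric. This yields a psc-metric $g'$ on $M'$ of product type near $\Sigma$. This is where $d\ge6$ enters, via \cref{thm:wall-geometric-connectivity} with $r=2$.
\item Now $M^\op\amalg M'$ is $\theta$-cobordant relative to the boundary to $M^\op\amalg M\sim\Sigma\times[0,1]$. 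So \cref{prop:psc-from-doubles-to-cylinders}, applied to $g_\double|_{M^\op}\amalg g'$, gives a psc-metric on $\Sigma\times[0,1]$ equal to $g_\double|_{M^\op}$ near $\Sigma\times\{0\}$ and to the product metric $g'$ near $\Sigma\times\{1\}$.
\item Gluing $g_\double|_M$ onto the first end gives the desired metric on $M\cup_\Sigma\Sigma\times[0,1]\cong M$.
\end{enumerate}
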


In contrast to \cref{lem:doubling-conjecture-criterion-one-component}, the following criterion is applicable to manifolds with non-connected boundary but has a dimensional constraint.

\begin{lemma}\label{lem:criterion-for-non-connected-boundaries}
  Let $M$ be an oriented manifold with boundary $\partial M$ satisfying $5\le \dim(M)\le 11$.
  If the tangential $2$-type of every component of $\partial M$ extends to $M$, then the \cref{doubling-conjecture} holds for $M$.
\end{lemma}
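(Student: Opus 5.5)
We prove only the nontrivial direction. Suppose $\double M$ carries a psc-metric $g_\double$; we must produce a psc-metric on $M$ with mean convex boundary. We may assume $M$ is connected, with boundary components $N_1,\dots,N_n$. The strategy follows the outline in the introduction: treat one boundary component at a time, each time replacing the other boundary components by a minimal interface. This reduces each step to a situation in which \cref{lem:doubling-conjecture-criterion-one-component} applies with connected boundary.

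\emph{Step 1 (area minimizer).} Fix a component $\Sigma=N_1\subset\double M$, viewed as a two-sided hypersurface. Since $\dim(\double M)\le 11$, minimize area in the homology class $[\Sigma]\in H_{d-1}(\double M;\bbZ)$. The minimizer exists as an integral current. By \cite[Theorem 1.2]{Chodosh-Mantoulidis-Schulze-Wang_Generic-regularity-for-minimizing-hypersurfaces-in-dimension-11}, after a small perturbation of $g_\double$ (which preserves psc) it is a smooth embedded, two-sided, stable minimal hypersurface $S$.

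\emph{Step 2 (separating $\Sigma$ and $S$).} We want $\Sigma$ and $S$ to bound a region. Pass to a finite cyclic cover of $\double M$ associated to the class Poincaré dual to $[\Sigma]$; if $\Sigma$ separates, no cover is needed. In this cover, lifts of $\Sigma$ and $S$ are homologous and are cut off by disjoint lifts. We then use the monotonicity formula for area-minimizers to show that $S$ lies within bounded distance of $\Sigma$ in the universal sense. Consequently a sufficiently large cyclic cover contains a copy of $\Sigma$ disjoint from a copy of $S$. Cutting the cover along $S$ yields a compact manifold $X$ containing $\Sigma$ in its interior, with psc-metric and stable minimal boundary. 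By \cite[Corollary 4.3]{Baer-Hanke_boundary-conditions-for-scalar-curvature}, we deform the metric near $\partial X$ to be strictly mean convex.

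\emph{Step 3 (surgery back to $M$).} Cut $X$ along $\Sigma$ and keep a piece $Y$ whose boundary is $\Sigma$ together with components of $\partial X$, which are strictly mean convex. We then need to compare $Y$ with $M$ near $\Sigma$. The tangential $2$-type $\theta$ of $\Sigma$ extends to $M$, and we aim to show that it also extends to $Y$, via covering maps and the fact that $Y$ maps to $\double M$ by a local diffeomorphism restricting to the identity on $\Sigma$. Then, as in the proof of \cref{lem:doubling-conjecture-criterion-one-component}, we use \cref{prop:psc-from-doubles-to-cylinders} together with \cref{rem:extension-of-surgery-principle} (i),(ii). These keep the metric fixed near the strictly mean convex components, and turn the psc-metric on the relevant side of $\Sigma$ into a metric on a collar $\Sigma\times[0,1]$. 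The collar agrees with $g_\double$ on the $M^\op$-side and ends in a strictly mean convex copy of $\Sigma$, which comes from a $2$-connected filling $M'$ as in \cref{cor:boundaries-with-hpsc-metrics}. Gluing this collar to $g_\double|_M$ along $N_1$ makes $N_1$ mean convex while leaving $g_\double$ unchanged away from a neighborhood of $N_1$. The construction is local near $N_1$, so we iterate over $N_2,\dots,N_n$ and obtain a metric on $M$ whose whole boundary is mean convex.

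\emph{Main obstacle.} The hardest step is Step 2: producing disjoint copies of $\Sigma$ and $S$ in a finite cover, which is what turns homologous into cobounding. We would first try the following argument. Intersections of lifts of $S$ with a fixed lift of $\Sigma$ are controlled by area. The monotonicity formula gives a lower bound on the area of $S$ in each ball of fixed radius, so $S$ cannot wander far from the region it must cross. Hence the covering translates of $S$ eventually separate from $\Sigma$. A secondary difficulty is verifying that the tangential $2$-type hypothesis survives the passage to covers and cuttings, so that \cref{prop:psc-from-doubles-to-cylinders} applies in Step 3.
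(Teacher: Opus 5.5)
\textbf{The gap is in your Step~2,} the step you yourself flag as the main obstacle.

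Your minimizer $S$ is chosen once in $\double M$. So every lift of it to the infinite cyclic cover is compact, and therefore meets only finitely many lifts of $\Sigma$. This is automatic, and the monotonicity argument you sketch proves nothing that is actually needed.

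What is needed, and is only asserted, is that some copy of $S$ in the cover is homologous to, and cobounds a region with, a \emph{single} lift of $\Sigma$. Homology in $\double M$ only controls the sum over all deck translates of the lifts, and a naive choice of lifts fails. For example, suppose $\partial M=N_1\amalg N_2\amalg N_3$ and the minimizer in $[N_1]$ is $-(N_2\amalg N_3)$. Lift $N_2$ into one fundamental domain and $N_3$ into the next. The resulting hypersurface has connected complement in the cover. Cutting along it therefore does not produce a region $X$ in which $\Sigma$ separates the two copies of $S$, and Step~3 requires exactly that.

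Separately, if $\Sigma$ separates $\double M$ then $[\Sigma]=0$ and there is nothing to minimize. That case has to go through \cref{lem:doubling-conjecture-criterion-one-component}.

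\textbf{How the paper avoids this.} It minimizes directly in the $m$-fold cyclic cover $\overline W_m$, in the class of one lift of $\Sigma$ (\cref{lem:hypersurfaces-in-covers}). There the minimizer $S_m$ depends on $m$, and this is precisely where monotonicity enters. On one hand $\area(S_m)\le\area(\Sigma)$ uniformly in $m$. On the other hand, meeting all $m$ lifts would force $\area(S_m)\ge m\,e^{-1}w_{d-1}r^{d-1}$, with $r$ independent of $m$. Once $S_m$ misses some lift, cutting there makes $S_m$ homologous to a boundary component of $m\cdot M_0$, hence separating.

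\textbf{Repairing your route without monotonicity.}
\begin{enumerate}
\item Write $S-\Sigma=\partial C$, with $C$ given by an integer-valued function $n$ that is locally constant off $S\cup\Sigma$.
\item Let $\hat\phi$ be the step function on the infinite cyclic cover that increases by $1$ across each lift of $\Sigma$, and put $F=\hat\phi+n\circ\pi$.
\item Then $F$ jumps by $1$ exactly across lifts of $S$, and $F\circ t=F+1$.
\item Consequently $\partial\{F\ge1\}$ consists of exactly one lift of each component of $S$. It differs from the lift $\partial\{\hat\phi\ge1\}$ of $\Sigma$ by the boundary of the compact region $\{F\ge1\}\triangle\{\hat\phi\ge1\}$.
\item Compactness then gives the disjoint cobounding copies in a large finite cover.
\end{enumerate}

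\textbf{Step~3.} With Step~2 repaired, your one-sided Step~3 is a valid and slightly more economical alternative to the paper's two-sided capping and re-doubling. State it explicitly in this form, rather than via a ``filling $M'$'':
\begin{enumerate}
\item Cap the piece $Y\colon\Sigma\leadsto S$ by $(Y')^\op$, where $Y'$ is $\theta$-cobordant to $Y$ relative to the boundary and $\Sigma\embeds Y'$ is $1$-connected.
\item Apply \cref{prop:psc-from-doubles-to-cylinders}.
\item Glue the resulting collar to $g_\double|_M$.
\end{enumerate}
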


We need some preparation for the proof of \cref{lem:criterion-for-non-connected-boundaries}.
We start by giving an analogue of the monotonicity formula for area-minimizing hypersurfaces of Riemannian manifolds with bounded sectional curvature.

\begin{lemma}\label{lem:monotonicity-for-bounded-curvature}
  Let $g$ be a Riemannian metric on $D^{n+1}$ with $-\kappa\le\sec_g\le \kappa$ for a constant $\kappa>0$ and let $R\coloneqq\min(\injrad(g),\tfrac{1}{2\sqrt{\kappa}})$.
  Furthermore, Let $S\subset \bbR^{n+1}$ be an area-minimizing hypersurface with respect to this metric which passes through the origin and satisfies $\partial S\cap B(0,R)=\emptyset$.
  Then, for $f(r)\coloneqq r^n\cdot\exp(-\sqrt{\kappa}n^2r)$ the function
  \[\alpha\colon r\mapsto \frac{\vol\bigl(B(0,r)\cap S\bigr)}{f(r)}\]
  is monotonously increasing on $(0,R)$.
  For $r\to0$ we furthermore have
  \[\alpha(r)\to \vol_{\eucl}\bigl(B(0,1)\cap\bbR^n\times\{0\}\bigr)\eqqcolon w_n\]
  In particular, we have for $r<R$
  \[\vol\bigl(B(0,r)\cap S\bigr)\ge w_n\cdot f(r)\]
  and for $r \le \tfrac12\min(\injrad(g),\tfrac1{n^2\sqrt{\kappa}})$ we get
  \begin{equation}\label{eq:uniform-volume-estimate}
    \vol\bigl(B(0,r)\cap S\bigr)\ge w_n f(r) = w_n r^n \exp\left(-n^2\sqrt{\kappa}r\right)\ge \exp(-1)w_nr^n.
  \end{equation}
\end{lemma}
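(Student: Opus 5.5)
We prove the monotonicity statement by the standard first-variation argument, working in geodesic normal coordinates centred at the origin and absorbing the curvature error terms into the exponential weight. Let $\rho(x)=d_g(0,x)$, which is smooth on $B(0,R)\setminus\{0\}$ because $R\le\injrad(g)$. Set $V(r)\coloneqq\vol(B(0,r)\cap S)$; this function is monotone, and by the coarea formula it is differentiable almost everywhere with
\[V'(r)\ge \int_{S\cap\partial B(0,r)}\frac{1}{|\nabla^S\rho|}\,d\sigma\ge \area\bigl(S\cap \partial B(0,r)\bigr).\]

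The first step is to compare $S\cap B(0,r)$ with the geodesic cone $C_r$ over $S\cap\partial B(0,r)$ with vertex $0$. Because $S$ is area-minimizing and $\partial S\cap B(0,R)=\emptyset$, the cone is an admissible competitor, which gives $V(r)\le \vol(C_r)$. By Rauch comparison with $\sec_g\ge-\kappa$, the exponential map distorts tangential lengths at radius $t\le r$ by at most the factor $\sinh(\sqrt\kappa t)/(\sqrt\kappa t)$ relative to the radial scaling. Hence
\[\vol(C_r)\le \int_0^r \Bigl(\tfrac{t}{r}\Bigr)^{n-1}\Bigl(\tfrac{\sinh(\sqrt\kappa t)/\sqrt\kappa t}{\sinh(\sqrt\kappa r)/\sqrt\kappa r}\Bigr)^{n-1}\cdots\,dt\cdot \area(S\cap\partial B_r).\]
A cleaner way to organise this is the divergence route: use $X=\tfrac12\nabla\rho^2$. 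By Hessian comparison for $|\sec|\le\kappa$ and $\rho<R\le 1/(2\sqrt\kappa)$, we have $\operatorname{div}_S X\ge n(1-c\sqrt\kappa\rho)$ with, say, $c\le n$ after crude bounds of the form $\sqrt\kappa\rho\coth(\sqrt\kappa\rho)\ge 1$ and $\sqrt\kappa\rho\cot(\sqrt\kappa\rho)\ge 1-\kappa\rho^2\ge 1-\sqrt\kappa\rho$. Since $S$ is minimal (being area-minimizing), the first variation formula applied with a cutoff $\phi(\rho)$ converging to $\chi_{[0,r]}$ gives
\[n(1-n\sqrt\kappa r)V(r)\le rV'(r).\]

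The second step is to integrate. The inequality says $\frac{d}{dr}\log V\ge \frac{n}{r}-n^2\sqrt\kappa\ge \frac{f'}{f}$, since $\log f=n\log r-\sqrt\kappa n^2r$. Therefore $\alpha=V/f$ is nondecreasing on $(0,R)$. I will keep the constants deliberately lossy so that they land exactly on the weight $\exp(-\sqrt\kappa n^2 r)$.

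For the limit as $r\to 0$, note that $S$ is smooth at $0$ and the metric is Euclidean to first order in normal coordinates. Hence $V(r)=w_nr^n(1+o(1))$ and $f(r)=r^n(1+o(1))$, so $\alpha(r)\to w_n$. Monotonicity then gives $V(r)\ge w_nf(r)$ for $r<R$. Finally, for $r\le\tfrac12\min(\injrad,\tfrac1{n^2\sqrt\kappa})$ we have $r<R$ (as $n\ge1$) and $n^2\sqrt\kappa r\le\tfrac12\le1$, which yields \eqref{eq:uniform-volume-estimate}.

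The main obstacle is the Hessian/divergence comparison: one has to get a clean lower bound $\operatorname{div}_S\nabla(\rho^2/2)\ge n(1-n\sqrt\kappa\rho)$, valid on the whole range $\rho<1/(2\sqrt\kappa)$, using only two-sided sectional curvature bounds. I would first try the standard comparison $\nabla^2\rho\ge\sqrt\kappa\cot(\sqrt\kappa\rho)(g-d\rho^2)$ and trace it over $TS$. The only loss is in the radial component, where $|\nabla^S\rho|^2\le 1$, and this is harmless because $\rho\nabla^2\rho\ge 0$ in that range.
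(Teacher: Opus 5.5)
Your proof is correct, but it takes a different route from the paper. The paper uses area-minimality directly. It compares $S\cap B(0,r)$ with the geodesic cone $C_r$ over $S\cap\partial B(0,r)$. It then controls the cone slices through the polar-coordinate comparison $g_\kappa\le g\le g_{-\kappa}$, which needs both curvature bounds and produces the factor $(\sinh(\sqrt{\kappa}\rho)/\sin(\sqrt{\kappa}r))^{n-1}$. Integrating gives $\vol(S_r)\le\frac{(\exp(\sqrt\kappa r)-1)^n}{n\sqrt\kappa\,\sin(\sqrt\kappa r)^{n-1}}\area(\partial S_r)$. Crude elementary estimates ($e^x-1\le x+x^2$, $\sin x\ge x-x^3$, Bernoulli) then yield $rV'\ge n(1-n\sqrt\kappa r)V$, and these lossy estimates are where the $n^2$ in $f$ comes from.

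Your divergence argument with $X=\rho\nabla\rho$ and Hessian comparison is the classical first-variation proof of monotonicity. It needs only that $S$ is stationary and only the upper bound $\sec_g\le\kappa$. It also gives the sharper bound $\operatorname{div}_S X\ge n\sqrt\kappa\rho\cot(\sqrt\kappa\rho)\ge n(1-\sqrt\kappa\rho)$, hence monotonicity even for the weight $r^n e^{-n\sqrt\kappa r}$. The paper's route avoids the first variation formula and Hessian comparison, at the price of requiring a minimizer and two-sided curvature bounds. That price is harmless for the application in \cref{lem:hypersurfaces-in-covers}, where the hypersurfaces are minimizers anyway. From the differential inequality on (integration, the limit $r\to0$, the final numerical estimate), the two proofs coincide.

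Three small corrections to your write-up:
\begin{enumerate}
\item The radial term is harmless not because $\rho\nabla^2\rho\ge0$, but because $x\cot x\le 1$ on $(0,\pi/2)$. Hence $\nabla^2(\rho^2/2)=d\rho\otimes d\rho+\rho\nabla^2\rho\ge \sqrt\kappa\rho\cot(\sqrt\kappa\rho)\,g$ in every direction (the radial eigenvalue is exactly $1$). Its trace over any $n$-plane is therefore at least $n\sqrt\kappa\rho\cot(\sqrt\kappa\rho)$; only this inequality is used, not a lower bound on the sectional curvature.
\item Passing from the a.e.\ inequality $n(1-\sqrt\kappa r)V\le rV'$ to monotonicity of $V/f$ uses that $V$ is nondecreasing. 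This ensures the singular part of its distributional derivative is nonnegative and only helps.
\item You should drop the cone sketch. The factor you display simplifies to $\sinh(\sqrt\kappa t)/\sinh(\sqrt\kappa r)<t/r$, so it already fails for a flat metric. The denominator has to come from the upper curvature bound, e.g.\ $\sin(\sqrt\kappa r)$ as in the paper.
\end{enumerate}
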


\begin{proof}
  Let $r\in(0,R)$ and let us introduce for $\rho\in(0,r]$ the following notation
  \begin{align*}
    S_\rho&\coloneqq S\cap B(0,\rho)  &\partial S_\rho&\coloneqq S\cap \partial B(0,\rho)\\
    C_\rho&\coloneqq \cone(0,\partial S_r)\cap B(0,\rho)  &\partial C_\rho&\coloneqq C_\rho\cap \partial B(0,\rho)
  \end{align*}
  \begin{figure}[ht]
    \scalebox{.6}{
    \begin{tikzpicture}
      \node at (0,0) {\includegraphics[width=.6\textwidth]{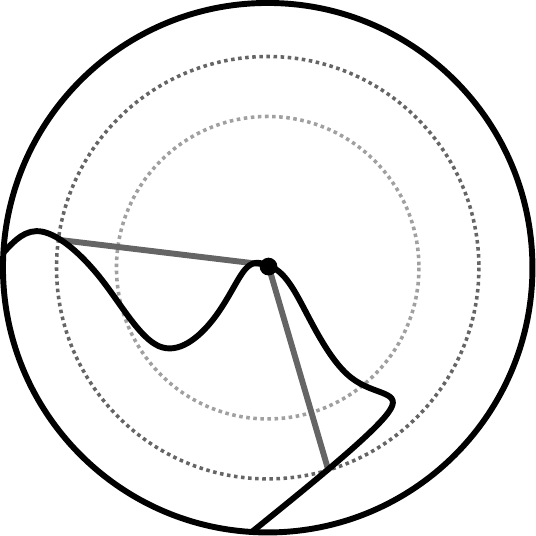}};
      \node at (1,-.6) {$S$};
      \node at (-1.6,-.1) {$C_r$};
      \node[rotate=-7] at (-1,0.6) {$\overbrace{\qquad\qquad\quad\ \ }^{C_\rho}$};
      \node at (0,3.3) {$\partial B(0,R)$};
      \node at (0,2.5) {$\partial B(0,r)$};
      \node at (0,1.7) {$\partial B(0,\rho)$};
    \end{tikzpicture}}
  \end{figure}

  \noindent Since the injectivity radius of $g$ is smaller than $R$, the metric $g$ is given by
  \[g=\drho^2 + g_\rho\]
  for a family of metrics $g_\rho$ on $S^{n-1}$.
  By \cite[Theorem 11.10]{Lee_Introduction-to-Riemannian-manifolds}, we have the following comparison to the metrics of constant curvature on $B(0,R)$
  \[\drho^2 + \frac1{{\kappa}}\sin(\sqrt{\kappa}\rho)^2g_\circ\eqqcolon g_{\kappa}\le g\le g_{-\kappa}\coloneqq\drho^2 + \frac1{{\kappa}}\sinh(\sqrt{\kappa}\rho)g_\circ,\]
  where $h\le h'$ means that $h(v,v)\le h'(v,v)$ for all $v\in TD^n$ and $g_\circ$ denotes the round metric.
  In particular, 
  \[\frac1{{\kappa}}\sin(\sqrt{\kappa}\rho)^2g_\circ\le g_\rho\le \frac1{{\kappa}}\sinh(\sqrt{\kappa}\rho)^2g_\circ\]
  Thus, for any $\rho\in(0,r]$ we have
  \[g_\rho\le \left(\frac{\sinh(\sqrt{\kappa}\rho)}{\sin(\sqrt{\kappa}r)}\right)^2g_r\]
  and in particular
  \begin{equation}\label{eq:area-scaling}
    \area(\partial C_\rho) = \left(\frac{\sinh(\sqrt{\kappa}\rho)}{\sin(\sqrt{\kappa}r)}\right)^{n-1}\area(\partial C_r)
  \end{equation}
  Since $S$ is area-minimizing, we have 
  \begin{align*}
    \vol(S_r)\le{}& \vol(C_r)\\
      ={}& \int_{0}^{r} \area(\partial C_\rho)\drho\\
      \overset{(\ref{eq:area-scaling})}{\le}{}& \int_{0}^{r} \left(\frac{\sinh(\sqrt{\kappa}\rho)}{\sin(\sqrt{\kappa}r)}\right)^{n-1}\area(\partial C_r)\drho\\
      ={}& \frac{\area(\partial C_r)}{\sin(\sqrt{\kappa}r)^{n-1}}\int_{0}^{r}\sinh(\sqrt{\kappa}\rho)^{n-1}\drho
  \end{align*}
  Since $\sinh(x) \le \exp(x)-1$ for every $x\ge0$ and 
  \[(\exp(\sqrt\kappa \rho)-1)^{n-1} = \tfrac1{n\sqrt\kappa}\dbyd{\rho} (\exp(\sqrt\kappa \rho)-1)^n\]
  for any $\rho\ge0$, we can estimate
  \begin{align*}
    \int_{0}^{r}\sinh(\sqrt{\kappa}\rho)^{n-1}\drho\le{}&\int_{0}^{r}\bigl(\exp(\sqrt{\kappa}\rho)-1\bigr)^{n-1}\drho\\
      ={}& \frac1{n\sqrt{\kappa}}\int_{0}^{r}\dbyd{\rho} \bigl(\exp(\sqrt{\kappa}\rho)-1\bigr)^n\drho\\
      ={}& \frac1{n\sqrt{\kappa}}\bigl(\exp(\sqrt{\kappa}r)-1\bigr)^n
  \end{align*}
  and thus
  \begin{align*}
    \vol(S_r)\le{}&\frac1{n\sqrt{\kappa}}\frac{(\exp(\sqrt{\kappa}r)-1)^n}{\sin(\sqrt{\kappa}r)^{n-1}}\area(\partial C_r).
  \end{align*}
  Furthermore, $\partial C_r = \partial S_r$ by definition and hence
  \begin{align*}
    \dbyd{r}\vol(S_r) ={}& \area(\partial S_r ) = \area(\partial C_r)\\
      \ge{}&n\sqrt{\kappa}\frac{\sin(\sqrt{\kappa}r)^{n-1}}{(\exp(\sqrt{\kappa}r)-1)^n}\vol(S_r).
  \end{align*}
  The functions $a(x) = \exp(x)-1-x-x^2$ and $b(x)=\sin(x)-x+x^3$ satisfy
  \begin{align*}
    a(0)&=0 &a'(0)&=0 & a''(x) &= \exp(x)-2<0\quad\text{for}\quad x\le\frac12\le \log(2)\\
    b(0)&=0 &b'(0)&=0 & b''(0) &= 0\qquad b'''(x)=-\cos(x)+6>0
  \end{align*}
  and thus $a(x)$ is non-positive and $b(x)$ is non-negative for $x\le 1/2$.
  Therefore, $\exp(x)-1\le x+x^2$ and $\sin(x)\ge x-x^3$.
  Furthermore, we have 
  \[\tfrac1{1+x}\ge 1-x \quad\text{and}\quad (1-x)^n\ge 1-nx \quad\text{for}\quad x\in(0,2),\]
  because $(1+x)(1-x) = 1-x^2\le 1$ and $c(x)=(1-x)^n-(1-nx)$ satisfies
  \begin{align*}
    c(0)&=0 &c'(x)&=n(1-x)^{n-1} + n = n(1+(1-x)^n)>0 \text{ for } x\in(0,2).
  \end{align*}
  Putting these together, we obtain 
  \begin{align*}
    \begin{split}
      \frac{\sin(x)^{n-1}}{(\exp(x)-1)^n}\ge{}& \frac{(x-x^3)^{n-1}}{(x+x^2)^n} = \frac{x^{n-1}(1-x)^{n-1}(1+x)^{n-1}}{x^n(1+x)^{n}}\\
        \ge{}& \frac{(1-x)^{n-1}}{x(1+x)} \ge \frac{(1-x)^n}{x} \ge \frac{1-nx}{x}\\
    \end{split}
  \end{align*}
  for $x\in(0,\tfrac{1}{2}]$.
  Thus, for $r\le1/(2\sqrt{\kappa})$, we get 
  \begin{align}\label{eq:fraction-sin-exponential}
    \begin{split}
      \dbyd{r}\vol(S_r) \ge{}&n\sqrt{\kappa}\frac{\sin(\sqrt{\kappa}r)^{n-1}}{(\exp(\sqrt{\kappa}r)-1)^n}\vol(S_r)\\
      \ge{}&n\frac{1-n\sqrt{\kappa}r}{r}\vol(S_r)
    \end{split}
  \end{align}
  We can now compute the $r$-derivative of $\alpha$:
  \begin{align*}
    \alpha'(r) ={}& \dbyd{r}\frac{\vol(S_r)}{f(r)}\\
      ={}& \frac1{f(r)}\left(\dbyd{r}\vol(S_r)\right) - \frac1{f(r)^2}f'(r)\vol(S_r)\\
      ={}& \frac1{f(r)^2}\left(f(r)\left(\dbyd{r}\vol(S_r)\right) - f'(r)\vol(S_r)\right)\\
      \ge{}& \frac1{f(r)^2}\left(nf(r)\frac{1-n\sqrt{\kappa}r}{r}\vol(S_r)- f'(r)\vol(S_r)\right)\\
      ={}& \frac{\vol(S_r)}{f(r)^2}\left(nf(r)\frac{1-n\sqrt{\kappa}r}{r}  - f'(r)\right)
  \end{align*}
  where the expression in the last line vanishes, since $f$ satisfies
  \begin{align*}
    f'(r) ={}& nr^{n-1}\exp(-\sqrt{\kappa}n^2r) - r^n\exp(-\sqrt{\kappa}n^2r)\sqrt{\kappa}n^2 \\
      ={}& nf(r)\frac{1 - n\sqrt{\kappa}r}{r}.
  \end{align*}
  This proves monotonicity of $\alpha$.
  The convergence follows from the fact that in normal coordinates the $1$-jets of $g$ and $g_\eucl$ agree at $0$.
  Therefore, $\vol(S_r)/(w_nr^n)$ converges to $1$ for $r\to0$ and hence
  \begin{align*}
    \alpha(r) = \frac{\vol(S_r)}{f(r)}= w_n\cdot \frac{\vol(S_r)}{w_nr^n}\exp(\sqrt{\kappa}n^2r)\too w_n
  \end{align*}
  for $r\to 0$.
  The monotonicity of $\alpha$ implies that $\alpha(r)\ge w_n$ for all $r>0$, and thus we have
  \[\vol(S_r) = \alpha(r)\cdot f(r) \ge w_n\cdot f(r)\qedhere\]
\end{proof}

\begin{lemma}\label{lem:hypersurfaces-in-covers}
  Let $d\le 11$ and let $W\colon\Sigma\leadsto \Sigma$ be a $d$-dimensional, orientable self-cobordism of a connected manifold $\Sigma$, equipped with a Riemannian metric $g$ that induces a smooth metric when gluing together the boundaries of $W$.
  Then, there exists an $m\in\bbN$ such that 
  \[m\cdot W\coloneqq \underbrace{W\cup_\Sigma\dots\cup_\Sigma W}_{m\text{-times}}\]
  equipped with the metric $g_m\coloneqq g\cup\dots\cup g$ contains a smooth, area-minimzing hypersurface $S$ in its interior which separates the two boundary components of $m\cdot W$.
\end{lemma}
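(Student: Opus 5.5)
Minimize area in the homology class of the boundary slice, then use monotonicity to keep the minimizer away from the boundary once enough copies are stacked.

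\textbf{Setup.} Fix $m$ (to be chosen) and consider $m\cdot W$ with metric $g_m$. Denote its boundary components by $\Sigma_0$ (incoming) and $\Sigma_m$ (outgoing), and the gluing interfaces by $\Sigma_1,\dots,\Sigma_{m-1}$. Since $W$ is orientable, each $\Sigma_j$ is a two-sided separating hypersurface, and the class $[\Sigma_0]\in H_{d-1}(m\cdot W;\bbZ)$ is nonzero. I would minimize area among integral currents in this class; equivalently, among boundaries of regions $\Omega$ with $\partial\Omega\setminus\Sigma_0$ in the interior, i.e.\ hypersurfaces separating $\Sigma_0$ from $\Sigma_m$. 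By compactness of integral currents a minimizer $T$ exists.

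\textbf{Regularity.} Because $d\le 11$, I would invoke \cite[Theorem 1.2]{Chodosh-Mantoulidis-Schulze-Wang_Generic-regularity-for-minimizing-hypersurfaces-in-dimension-11} to get smoothness of the minimizer in the interior. Strictly, that result gives smoothness only for generic metrics. To apply it, I would perturb $g$ slightly on the interior of a single copy of $W$, away from its boundary, so that the glued metric stays smooth and periodic elsewhere. Alternatively one may read the lemma's hypotheses as permitting this.

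\textbf{Main obstacle: keeping $T$ off the boundary.} The minimizer may touch $\Sigma_0$ or $\Sigma_m$, and this is where Lemma \ref{lem:monotonicity-for-bounded-curvature} and the choice of $m$ enter.
\begin{enumerate}
\item The periodic metric $g_m$ has curvature bounds $\kappa$ and injectivity radius bounds (in the interior, away from the ends) that are independent of $m$. So \eqref{eq:uniform-volume-estimate} gives a uniform $r_0>0$ and $c>0$ such that $\vol(T\cap B(x,r_0))\ge c$ for every interior point $x$ of $T$ at distance at least $r_0$ from $\partial(m\cdot W)$.
\item The area of $T$ is bounded by $A\coloneqq\area(\Sigma)$, since each $\Sigma_j$ is a competitor.
\item Each copy of $W$ has diameter bounded independently of $m$. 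Hence, if $T$ reached from near $\Sigma_0$ to near $\Sigma_m$, it would have to cross on the order of $m$ disjoint slabs. Being connected there, or at least having a component spanning the slabs, it would contain about $m/\text{const}$ points whose $r_0$-balls are pairwise disjoint. This forces $\area(T)\gtrsim c\,m$, which exceeds $A$ for large $m$.
\end{enumerate}

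\textbf{Separating and avoiding the ends.} The step above alone does not stop $T$ from hugging $\Sigma_0$, and turning it into a full argument is the step I expect to be delicate. I would handle it by first building $3m\cdot W$ and replacing $T$ by its restriction to the middle third, where the argument applies cleanly. In more detail:
\begin{enumerate}
\item Since $T$ is homologous to every $\Sigma_j$, it must intersect every slab, otherwise the intersection number with an arc from $\Sigma_0$ to $\Sigma_m$ would vanish.
\item By the area bound, $T$ can meet at most $A/c$ of the "deep" interior balls, so it must be disconnected in most slabs.
\item A slab whose interface $\Sigma_j$ has already been crossed by $T$ only in small-area pieces leads to a contradiction via monotonicity.
\item Choosing $m > A/(c\cdot\text{const})$, some component $S$ of $T$ lies in the interior and is still homologous to $[\Sigma_0]$, so it separates the two boundary components.
\item Each component of a minimizer is itself area-minimizing in its class, so $S$ is smooth and area-minimizing, as required.
\end{enumerate}
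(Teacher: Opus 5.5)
There is a genuine gap, and it sits exactly where you expect difficulty. Because you minimize in the manifold with boundary $m\cdot W$, nothing keeps the minimizer off $\partial(m\cdot W)$. A homological minimizer there may touch or coincide with $\Sigma_0$. For instance, $T=\Sigma_0$ is a minimizer whenever $\Sigma$ is area-minimizing in its class in the closed-up manifold. At such points neither the regularity theory nor Lemma \ref{lem:monotonicity-for-bounded-curvature} (which needs $\partial S\cap B(0,R)=\emptyset$ inside the manifold) applies.

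Your repair via $3m\cdot W$ does not close this gap. The claim that $T$ must intersect every slab is false: $T=\Sigma_0$ is homologous to every $\Sigma_j$ but meets only the first slab. The intersection-number argument only shows that $T$ meets every arc from $\Sigma_0$ to $\Sigma_m$. Consequently the restriction of $T$ to the middle third may be empty or null-homologous. Monotonicity does show that $T$ misses many interior interfaces. It does not show that the homologically essential part of $T$ lies away from the ends, so the claim that some interior component is still homologous to $[\Sigma_0]$ is unsupported. Your counting step also needs a connected piece of $T$ spanning many slabs, which you do not have.

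The missing idea is to close up the ends instead of working relative to fixed ones. Glue the boundary of $m\cdot W$ to get the closed manifold $\overline W_m$. With the pulled-back metric $\overline g_m=p_m^*\overline g$ it is an $m$-fold cyclic cover of $\overline W$. Its $m$ copies of $\Sigma$ are all homologous and represent a nonzero class $\sigma$, since each has intersection number $1$ with a loop. Minimize in $\sigma$ in $\overline W_m$. The minimizer $S_m$ is closed and satisfies $\vol(S_m)\le\vol(\Sigma)$. Monotonicity applies at every point of $S_m$ with $m$-independent constants, because the curvature bounds and the injectivity radius lower bound lift to the cover.

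The dichotomy is then clean and needs no connectivity. Suppose $S_m$ meets all $m$ copies, and pick one point $x_i\in S_m\cap\Sigma_i$ on each. The balls $B(x_i,r)$ with $r\le\frac14\width(W)$ are pairwise disjoint, so \eqref{eq:uniform-volume-estimate} gives $\vol(S_m)\ge m\,e^{-1}w_{d-1}r^{d-1}$. This contradicts the area bound for large $m$. Hence $S_m$ misses some copy of $\Sigma$. Cutting $\overline W_m$ open along that copy, rather than along a pre-chosen $\Sigma_0$, yields $m\cdot W$ again by periodicity. In it $S_m$ lies in the interior, is homologous to the ends, and therefore separates them.

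Your remark that the regularity result of Chodosh--Mantoulidis--Schulze--Wang is only generic is a fair point; the paper also passes over it. It is secondary to the boundary issue above.
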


\begin{proof}
  Let $\overline W$ (resp. $\overline W_m$) be the closed manifolds obtained by gluing the two boundary components of $W$ (resp. $m\cdot W$), see \cref{fig:self-cobordism-and-gluing}.
  We equip $\overline W$ (resp. $\overline W_m$) with the metric $\overline g$ (resp. $\overline g_m$) induced by $g$ (resp. $g_m$).
  Note, that the canonical map $p_m\colon \overline W_m\to\overline W$ is a covering and that $\overline g_m\coloneqq p_m^*\overline g$.
  \begin{figure}[ht]
    \scalebox{.8}{
      \begin{tikzpicture}
        \node at (-3,0){\includegraphics[width=.25\textwidth]{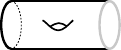}};
        \node at (-3,1.2) {$W$};
        \node at (-1.6,1.2) {$\Sigma$};
        
        \node at (3,0){\includegraphics[width=.45\textwidth]{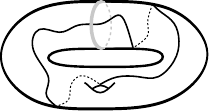}};
        \node at (1.5,2) {$\overline W$};
        \node at (3,2) {$\overline\Sigma$};
        \node (0) at (0,-2) {$\overline\gamma$};
        \draw[-stealth, bend left=30] (0) to (1.3,-.8);
      \end{tikzpicture}
    }
    \caption{The self-cobordism $W\colon \Sigma\leadsto\Sigma$ and the closed manifold $\overline W$ obtained by gluing the boundaries of $W$ containing $\overline\Sigma\cong \Sigma$ which intersects a loop generating a free cyclic subgroup of $\pi_1(W)$ transversely in a single point.}\label{fig:self-cobordism-and-gluing}
  \end{figure}
  The manifold $\overline W_m$ contains $m$ isometric copies of $\Sigma$ and $m\cdot W$ can be obtained by cutting open $\overline W_m$ along any of these copies.
  Any of these copies yields a non-trivial class in $\sigma\in\homology_{d-1}(\overline{W}_m;\bbZ)$, since it has intersection number $1$ with an appropriately chosen loop $\overline\gamma$ which generates an infinite cyclic subgroup of $\pi_1(\overline{W}_m)$.
  By \cite[Theorem 1.2]{Chodosh-Mantoulidis-Schulze-Wang_Generic-regularity-for-minimizing-hypersurfaces-in-dimension-11}, there exists for every $m\ge1$ a smooth, area-minimizing hypersurface $S_m\subset \overline{W}_m$ in the homology class $\sigma$ which thus satisfies 
  \begin{equation}\label{eq:volume-of-minimizer}
    \vol(S_m)\le\vol(\Sigma_m) = \vol(\Sigma)\quad\text{for all } m\ge1.
  \end{equation}
  If there exists some copy $\overline\Sigma$ of $\Sigma$ such that $\overline\Sigma\cap S_m=\emptyset$, then we can cut open $\overline{W}_m$ along $\overline\Sigma$, and we obtain a hypersurface $S$ in the interior of $m\cdot W$ which is homologous to either boundary component.
  Since $\Sigma$ was assumed to be connected, $S_m$ has to separate the boundary components of $m\cdot W$.

  \medskip

  For a contradiction, let us assume that for every $m\ge1$ the manifold $S_m$ intersects every copy of $\Sigma$ in $W_m$ and let $x_i\in S_m$ lie in the intersection with the $i$-th copy of $\Sigma$.
  Since $\overline W$ is compact, there exists a constant $\kappa>0$ (independent of $m$) such that $-\kappa\le \sec_{\overline g}\le \kappa$ and since $\overline g_m$ is a pullback of $\overline g$, it satisfies the same estimate.
  Similarly, the injectivity radius of $\overline g_m$ is bounded from below by the injectivity radius of $\overline g$ and the width of $g$.
  We define
  \[r\coloneqq\frac14\min\left({\rm injrad}(\overline{W}),\; \frac{1}{(d-1)^2\sqrt{\kappa}},\; \width({W})\right),\]
  which is independent of $m$ but only depends on $g$ and $d=\dim(W)$.
  We observe that the balls $B(x_i,r)$ are pairwise disjoint by the choice of $x_i$ and $r$.
  By the monotonicity formula (\cref{lem:monotonicity-for-bounded-curvature} and \eqref{eq:uniform-volume-estimate} for $n=d-1$) we get
  \[\vol\bigl(S_m\cap B(x_i,r)\bigr) \ge \exp(-1)w_{d-1}r^{d-1}\]
  for every $i\in\{1,\dots,m\}$.
  Therefore, we get
  \begin{align*}
    \vol(S_m)&\ge \sum_{i=1}^{n} \vol\bigl(S_m\cap B(x_i,r)\bigr)\\      
      &\ge m\cdot \exp(-1)w_{d-1}r^{d-1}
  \end{align*}
  and hence $\vol(S_m)\to\infty$ for $m\to\infty$.
  However, this is a contradiction to the inequality $\vol(S_m)\le \vol(\Sigma)$ from \eqref{eq:uniform-volume-estimate} for $m$ large enough.
  Thus, there exists an $m\ge1$ such that $S_m$ does not intersect some copy of $\Sigma$.
\end{proof}

\begin{proof}[Proof of \cref{lem:criterion-for-non-connected-boundaries}]
  If $\partial M$ is connected, then this is a weaker statement than \cref{lem:doubling-conjecture-criterion-one-component}, so we may assume that there are at least $2$ boundary components.
  Let $g_\double$ be a psc-metric on $\double M$ and let $\Sigma$ be a component of $\partial M$.
  We define $M_0\coloneqq M\cup_{\partial M\setminus\Sigma} M^\op$, which is a self-cobordism of $\Sigma$, see \cref{fig:double-at-hypersurface}.
  For $m\in\bbN$ we furthermore define:
  \[m\cdot M_0\coloneqq \underbrace{M_0\cup_\Sigma\dots\cup_\Sigma M_0}_{m\text{-times}}\]
  and we equip $M_0$ (resp. $m\cdot M_0$) with the psc-metric $g_0$ (resp. $m\cdot g_0\coloneqq g_0\cup\dots\cup g_0$) induced by $g_\double$.
  \begin{figure}[ht]
    \scalebox{.9}{
    \begin{tikzpicture}
      \node at (-3,0) {\includegraphics[height=10em]{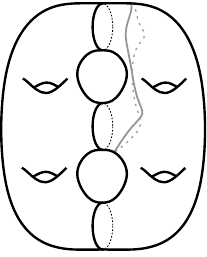}};
      \node at (-4,2){$\double M$};
      \node at (-2.5,2){\color{gray}$S$};
      \node at (-3,-2){$\Sigma$};
      
      \node at (3,0) {\includegraphics[height=8em]{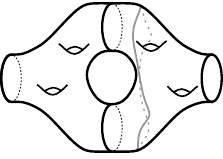}};
      \node at (1.5,1){$M_0$};
      \node at (1,-1){$\Sigma$};
      \node at (5,-1){$\Sigma$};
    \end{tikzpicture}}
    \caption{Cutting open $\double M$ along the boundary component $\Sigma$.
    Note, that unlike in this picture, $S$ might not be contained in $M_0$, but it will be contained in $m\cdot M_0$ for some possibly large $m\in\bbN$ by \cref{lem:hypersurfaces-in-covers}.}\label{fig:double-at-hypersurface}
  \end{figure}

  \medskip
  
  An application of \cref{lem:hypersurfaces-in-covers} yields a smooth, area-minimizing hypersurface $S$ contained in the interior of $m\cdot M_0$ which separates the two  boundary components of $m\cdot M_0$.
  Thus, we get a decomposition
  \[m\cdot M_0 = M_1\cup_S M_2.\]
  We now consider the cobordism 
  \[M_2\cup_\Sigma M_0\cup_\Sigma M_1\colon S\leadsto S,\]
  see \cref{fig:double-cut-open-extended}.
  Since $S$ is minimal inside $m\cdot M_0$, we deduce that $m\cdot g_0\cup g_0\cup m\cdot g_0$ restricts to a psc-metric $g_1$ on $M_2\cup_\Sigma M_0\cup_\Sigma M_1$ with minimal boundary.
  By \cite[Corollary 4.3]{Baer-Hanke_boundary-conditions-for-scalar-curvature} the metric $g_1$ can be deformed in a neighborhood of the boundary so that the boundary becomes strictly mean convex.
  As the deformations in \cite{Baer-Hanke_boundary-conditions-for-scalar-curvature} only take place locally near the boundary, we observe that $g_1$ restricts to $g_0$ on $M_0$.
  \begin{figure}[ht]
    \scalebox{.9}{
    \begin{tikzpicture}
      \node at (0,0) {\includegraphics[width=.8\textwidth]{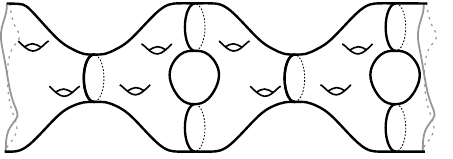}};
      \node at (-1,2.1){$M_0$};
      \node at (3,2.1){$M_1$};
      \node at (-4.5,2.1){$M_2$};
      \node at (4.5,-2){\color{gray}$S$};
      \node at (-5,-2){\color{gray}$S$};
      \node at (1.5,-1){$\Sigma$};
      \node at (-3,-1){$\Sigma$};
    \end{tikzpicture}}
    \caption{The self-cobordism $M_2\cup_\Sigma\cup M_0\cup_\Sigma M_1$ of $S$.}\label{fig:double-cut-open-extended}
  \end{figure}
  
  By our assumption, the tangential $2$-type of $\Sigma$ extends to $M$ and hence also to $M_0$, $m\cdot M_0$, $M_1$ and $M_2$.
  Therefore, $M_1$ and $M_2$ are $\theta$-cobordant to manifolds $M_1'$ and $M_2'$ for which the inclusions $\Sigma\embeds M_1',M_2'$ are $2$-connected, see \cite[Appendix B]{hebestreitjoachim_twisted-spin-cobordism-and-positive-scalar-curvature} or \cite[Proposition 6.3]{ebertfrenck}.
  Employing \cref{lem:cobordism-principle-for-hpsc-bounding} we can extend the psc-metric $g_1$ on $M_2\cup_\Sigma M_0\cup_\Sigma M_1$ to a psc-metric $g_2$ with mean convex boundary on 
  \[(M_2')^\op\cup_S M_2\cup_\Sigma M_0\cup_\Sigma M_1\cup_S (M_1')^\op,\]
  see \cref{fig:double-at-hypersurfaces}.
  Since both $M_1\cup_S (M_1')^\op$ and $(M_2')^\op\cup_S M_2$ are both $\theta$-bordant to doubles relative to the boundary $\Sigma\amalg\Sigma$, \cref{prop:psc-from-doubles-to-cylinders} implies that there exists a psc-metric $g_3$ on $M_0$ with mean strictly convex boundary, which equals $g_0$ away from the boundary.
  By \cite[Corollary 4.3]{Baer-Hanke_boundary-conditions-for-scalar-curvature}, $g_3$ can be deformed in a neighborhood of the boundary so that it is doubling, that is, it induces a smooth metric on the double.
  In particular, both boundary components are minimal.
  \begin{figure}[ht]
    \scalebox{.9}{\begin{tikzpicture}
      \node at (0,0) {\includegraphics[width=1\textwidth]{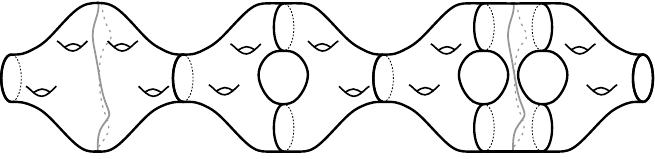}};
      \node at (-1,1.9){$M_0$};
      \node at (2,1.9){$M_1$};
      \node at (5.2,1.9){$(M_1')^\op$};
      \node at (-5.5,1.9){$(M_2')^\op$};
      \node at (-3.5,1.9){$M_2$};
      \node at (-4.5,-1.7){\color{gray}$S$};
      \node at (3.6,-1.7){\color{gray}$S$};
      \node at (1,-1){$\Sigma$};
      \node at (-2.8,-1){$\Sigma$};      
      \node(4) at (-4.5,-2.2) {$\underbrace{\qquad\qquad\qquad\quad\qquad}_{=(M_2')^\op\cup_{S}M_2\ \sim\  \Sigma\times[0,1]}$};
      \node(4) at (3.6,-2.2) {$\underbrace{\quad\qquad\qquad\qquad\qquad\quad\qquad\qquad}_{=M_1\cup_{S}(M_1')^\op\ \sim\  \Sigma\times[0,1]}$};
    \end{tikzpicture}}
    \caption{Attaching $(M_1')^\op$ and $(M_2')^\op$ to $M_2\cup_\Sigma M_0\cup_\Sigma M_1$.}\label{fig:double-at-hypersurfaces}
  \end{figure}
  
  \medskip

  It remains to adjust the boundary restrictions of the metric $g_3$, so that we obtain a smooth metric on $\double M$ upon gluing the boundaries. 
  We observe, that $M_0$ itself is also $\theta$-cobordant to $\Sigma\times[0,1]$ relative to the boundary and hence there exists a psc-metric $g_{\rm{cyl}}$ on $\Sigma\times[0,1]$ which is doubling and whose boundary restriction equals the one of $g_3$.
  Hence, we can flip it and glue the flipped metric $g_{\rm{cyl}}^\op$ on one side onto $g_3$ to obtain a metric $g_4$ on $M_0$ with equal boundary restrictions and both boundary components are doubling and in particular minimal, see \cref{fig:back-to-collars}.
  \begin{figure}[ht]
    \scalebox{.9}{\begin{tikzpicture}
      \node at (0,0) {\includegraphics[width=1\textwidth,trim=0 0 0 10,clip]{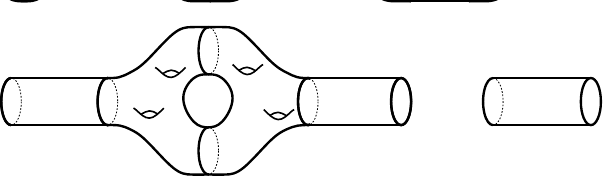}};
      \node at (-2,1.9){$M_0$};
      \node at (5,1){$\Sigma\times[0,1]$};
      \node at (-3.6,-1.4){$g_4$};
      \node at (-6,-1){$h_0$};
      \node at (2,-1){$h_1$};
      \node at (5,-1.4){$g_{\rm{cyl}}^\op$};      
      \node at (4,-1){$h_1$};
      \node at (6,-1){$h_0$};
    \end{tikzpicture}}
    \caption{Adjusting the boundary metrics on $M_0$ by gluing on the metric $g_{\rm cyl}^\op$ onto $g_3$.}\label{fig:back-to-collars}
  \end{figure}

  Gluing the boundary components of $(M_0,g_4)$, we obtain a smooth psc-metric $\overline g_4$ on $\double M$, such that 
  \begin{enumerate}
    \item $\overline g_4$ agrees with $g$ outside a neighborhood of $\Sigma\subset \double M$
    \item $\Sigma$ is minimal with respect to $\overline g_4$.
  \end{enumerate}
  In particular, we have $\overline g_4=g$ near all boundary components except $\Sigma$, which is now a minimal hypersurface in $\double M$.
  
  \medskip

  Therefore, we can perform the same procedure for every boundary component of $M$, and we obtain a psc-metric on $\double M$ such that $\partial M\subset \double M$ is a minimal hypersurface.
  Cutting open $\double M$ along the boundary, we obtain a psc-metric with minimal boundary on $M$, which, in particular has mean convex boundary, proving the \cref{doubling-conjecture} for $M$.
\end{proof}

\begin{remark}\label{rem:construction-is-local}
  The constructions in the proofs of \cref{lem:doubling-conjecture-criterion-one-component}, \cref{cor:doubling-for-extendable-tangential-two-type} and \cref{lem:criterion-for-non-connected-boundaries} are local in the following sense:
  If $g$ is a psc-metric on $\double M$, then the psc-metric on $M$ constructed in the respective proofs agrees with $g|_{M}$ away from the boundary $\partial M$.
\end{remark}

Next, we prove an inheritance lemma for the \cref{doubling-conjecture}, which is visualized in \cref{fig:inheritance}.

\begin{lemma}\label{lem:inheritance-for-doubling-conjecture}
  Let $M_1\colon\emptyset\leadsto\Sigma'$ and let $M_2\colon \Sigma'\leadsto \Sigma$ be a cobordism of dimension at least $5$ such that the following holds:
  \begin{enumerate}
    \item $M_1$ satisfies the \cref{doubling-conjecture}.
    \item The tangential $2$-type of $\Sigma'$ extends to $M_2$.
    \item For one component $\Sigma_1\subset\Sigma$ the inclusion $\Sigma_1\embeds M_2$ is $1$-connected.
    \item All components $\Sigma_i\subset \Sigma$ other than $\Sigma_1$ admit a psc-metric.
  \end{enumerate} 
  Then $M_1\cup_{\Sigma'} M_2$ also satisfies the \cref{doubling-conjecture}.
\end{lemma}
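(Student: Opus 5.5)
We prove the nontrivial direction. Let $M\coloneqq M_1\cup_{\Sigma'}M_2$, with $\partial M=\Sigma$, and let $g_\double$ be a psc-metric on $\double M$. The converse direction is the classical doubling construction recalled in the introduction. The plan has three steps: replace the middle part of $\double M$ by a cylinder to get a psc-metric on $\double M_1$, use (i) to fill $M_1$ with a mean convex metric, and then reattach $M_2$ by handles of codimension at least $2$.

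\emph{Step 1: a psc-metric on $\double M_1$.} We decompose
\[\double M = M_1\cup_{\Sigma'} \bigl(M_2\cup_\Sigma M_2^\op\bigr)\cup_{\Sigma'} M_1^\op.\]
The middle piece $W\coloneqq M_2\cup_\Sigma M_2^\op$ is the double of the cobordism $M_2\colon\Sigma'\leadsto\Sigma$. By (ii), the tangential $2$-type of $\Sigma'$ extends to $M_2$. So \cref{prop:psc-from-doubles-to-cylinders}, applied with $S_0=\Sigma'$, $S_1=\Sigma$ and $g_\double|_W$, gives a psc-metric on $\Sigma'\times[0,1]$ that agrees with $g_\double$ near both ends. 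Gluing this cylinder in place of $W$ yields a smooth psc-metric on $M_1\cup(\Sigma'\times[0,1])\cup M_1^\op\cong\double M_1$.

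\emph{Step 2: a mean convex metric on $M_1$.} By (i), $M_1$ admits a psc-metric with mean convex boundary $\Sigma'$. Using \cite[Corollary 4.3]{Baer-Hanke_boundary-conditions-for-scalar-curvature} (or a small inward push along the collar), we may assume the boundary is \emph{strictly} mean convex.

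\emph{Step 3: reattaching $M_2$.} We want to apply \cref{lem:cobordism-principle-for-hpsc-bounding}. The problem is that the inclusion $\Sigma\embeds M_2$ need not be $1$-connected when $\Sigma$ is disconnected. We therefore reread $M_2$ as a cobordism
\[M_2\colon \Sigma'\amalg\textstyle\coprod_{i\ge2}\Sigma_i^\op\leadsto\Sigma_1.\]
By (iii), its outgoing boundary $\Sigma_1\embeds M_2$ is $1$-connected. Since $1\le d-4$ for $d\ge5$, \cref{thm:wall-geometric-connectivity} shows that this cobordism has a handle decomposition with handles of index at most $d-2$ relative to the incoming end, that is, handles of codimension at least $2$.

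Next we supply the incoming pieces $\Sigma_i$, $i\ge2$, with suitable metrics. By (iv), each $\Sigma_i$ carries a psc-metric $h_i$, which we scale so that $\scal_{h_i}$ is large. Then the warped product $\dt^2+\cosh(t)^2h_i$ on $\Sigma_i\times[-1,1]$ has positive scalar curvature. Its two boundary components have mean curvature $\pm\tanh(1)$, which is strictly positive with respect to the outward normal at each end. We then form
\[X\coloneqq M_1\amalg\textstyle\coprod_{i\ge 2}\Sigma_i\times[-1,1],\]
which has a psc-metric with strictly mean convex boundary. We attach the handles of $M_2$ to $\Sigma'$ and to one end $\Sigma_i\times\{-1\}$ of each cylinder. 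By \cref{lem:cobordism-principle-for-hpsc-bounding} (applied to each component), the result carries a psc-metric with strictly mean convex boundary. Its boundary is $\Sigma_1\amalg\coprod_{i\ge2}\Sigma_i\times\{1\}\cong\Sigma$, and the manifold is diffeomorphic to $M_1\cup_{\Sigma'}M_2=M$, since gluing collars does not change the diffeomorphism type.

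\emph{Main obstacle.} The delicate point is Step 3. We must check that the orientation and normal conventions match: the handles are attached on the positive side of the strictly mean convex hypersurface, as required in \cref{thm:lawson-michelsohn}. We must also check that turning the components $\Sigma_i$, $i\ge2$, from outgoing into incoming boundary is compatible with Wall's theorem. That theorem needs only the outgoing inclusion $\Sigma_1\embeds M_2$ to be $1$-connected, which is exactly hypothesis (iii); hypothesis (iv) is then used only to make the reversed boundary components mean convex from both sides.
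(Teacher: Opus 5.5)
Your proposal is correct and follows the paper's proof essentially step by step. You replace $M_2\cup_\Sigma M_2^\op$ by a cylinder via \cref{prop:psc-from-doubles-to-cylinders} to get psc on $\double M_1$. You then regard $M_2$ as a cobordism $\Sigma'\amalg\coprod_{i\ge2}\Sigma_i\leadsto\Sigma_1$, put mean convex warped cylinders on the components $\Sigma_i$, $i\ge2$, and attach handles of codimension at least $2$ using \cref{thm:wall-geometric-connectivity} and \cref{lem:cobordism-principle-for-hpsc-bounding}.

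The differences are cosmetic. You use a $\cosh$ warping with rescaled $h_i$ instead of $f(t)=1+\tfrac{t^2}{8(d-1)}$. You also explicitly upgrade to strict mean convexity via Bär--Hanke, which the paper leaves implicit. Your alternative ``inward push'' is not justified in general, since $\mathrm{Ric}(\nu,\nu)$ may be negative where $H=0$, so rely on the Bär--Hanke citation there.
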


\begin{remark}
  In the case that $\Sigma'$ is connected, $(iv)$ is vacuously satisfied.
\end{remark}

\begin{figure}[ht]
  \scalebox{.8}{
    \begin{tikzpicture}
      \node at (0,0) {\includegraphics[width=.8\textwidth]{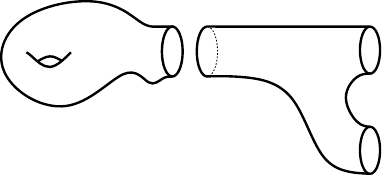}};
      \node at (-3,3) {$M_1$};
      \node at (-3,2.6){\scriptsize satisfies \cref{doubling-conjecture}};
      \node at (3,2.3) {$M_2$};
      \node at (5,2) {$\Sigma$};
      \node[anchor=west] at (5.1,1) {$\Sigma_1$};
      \node[anchor=west] at (5.1,-1.7) {$\Sigma_i$, $i\ge2$};
      \node at (5.9,-2.1){\scriptsize admits psc};
      \node at (4,1.3) {\scriptsize $1$-conn.};
      \draw[-stealth](.2,1) to (1.2,1);
      \node[fill=white] at (1.5,1.27) {\scriptsize $2$-type extends};  
      \draw[-stealth](4.5,1) to (3.5,1);
      \node at (0.5,2) {$\Sigma'$};
    \end{tikzpicture}
  }
  \caption{The situation from \cref{lem:inheritance-for-doubling-conjecture}}\label{fig:inheritance}
\end{figure}

\begin{proof}[Proof of \cref{lem:inheritance-for-doubling-conjecture}]
  Let us assume that the double $\double(M_1\cup M_2)$ of $M_1\cup M_2$ admits a metric $g_\double$ of positive scalar curvature.
  Since the tangential $2$-type $\theta$ of $\Sigma'$ extends to $M_2$, the manifold $M_2\cup_{\Sigma} M_2^\op$ is a $\theta$-cobordism from $\Sigma'$ to itself by (ii).
  By \cref{prop:psc-from-doubles-to-cylinders}, there exists a psc-metric on $\Sigma'\times[0,1]$ that agrees with $g_\double|_{M_2\cup_{\Sigma}M_2^\op}$ near the boundary and can hence be extended over $M_1$ and $M_1^\op$ to give a psc-metric on 
  \(\double M_1\cong M_1\cup_{\Sigma'\times\{0\}}\Sigma'\times[0,1]\cup_{\Sigma'\times\{1\}} M_1^\op,\)
  see \cref{fig:remove_m2}.
  Thus, $\double M_1$ admits a psc-metric and since $M_1$ was assumed to satisfy the \cref{doubling-conjecture}, $M_1$ admits a psc-metric with mean convex boundary.

  \begin{figure}[ht]
    \scalebox{.85}{\begin{tikzpicture}
      \node(0) at (0,0) {\includegraphics[width=\textwidth]{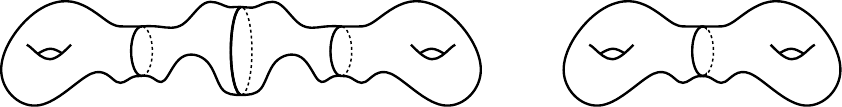}};
      \node(1) at (-6,1.2) {$M_1$};
      \node(2) at (-0.6,1.2) {$M_1^\op$};
      \node(3) at (-3.5,1.3) {$M_2$};
      \node(4) at (-2.7,-1.3) {$\underbrace{\quad\qquad\quad\qquad\qquad}_{=M_2\cup_{\Sigma'}M_2^\op\ \sim\  \Sigma'\times[0,1]}$};
      \node(5) at (4.1,1.1) {$\double M_1$};
      \node(6) at (1.5,0) {$\sim$};
    \end{tikzpicture}}
    \caption{The double of $M_1\cup M_2$ is cobordant to the double of $M_1$.}\label{fig:remove_m2}
  \end{figure}
  
  Next, we choose for every $i\ge2$ a metric $h_i$ on $\Sigma_i$ with $\scal(h_i)\ge1$, and we consider the metric $g_i = \dt^2 + f(t)^2 h_i$ for $f(t) = 1+\tfrac{t^2}{8(d-1)}$ on $\Sigma_i\times[-1,1]$.
  It has scalar curvature given by
  \begin{align*}
    \scal(g_i) 
      =&{}\frac{1}{f^2}\left(\scal(h_i)-(d-1)(d-2)f'^2-2(d-1)ff''\right)
      \ge\frac{1}{f^2}\left(\frac12-\frac{t^2}{16}\right),
  \end{align*}
  which is positive since $t^2\le 1$.
  The mean curvature of the slices $\Sigma_i\times\{\pm1\}$ with respect to the outward pointing normal vector fields is given by
  \begin{align*}
    H(g_i)|_{\Sigma_i\times\{\pm1\}} = \pm(d-1) \frac{f'(t)}{f(t)} 
    = \frac{2(d-1)}{8d-7} > 0.
  \end{align*}  
  So, the metrics $g_i$ have positive scalar curvature and strictly mean convex boundary.
  If we define $\widetilde\Sigma\coloneqq\Sigma\setminus\Sigma_1$, we get a cobordism 
  \[W\coloneqq M_1\amalg\left(\widetilde\Sigma \times[-1,1]\right)\colon\widetilde\Sigma\leadsto \Sigma'\amalg\widetilde{\Sigma},\]
  which admits a psc-metric with strictly mean convex boundary, see \cref{fig:weak-inheritance}.
  $M_2$ can now be interpreted as a cobordism
  \[M_2\colon \Sigma\amalg \widetilde \Sigma\leadsto \Sigma_1,\]
  and, since the inclusion of $\Sigma_1\embeds M_2$ is $1$-connected, \cref{thm:wall-geometric-connectivity} and \cref{lem:cobordism-principle-for-hpsc-bounding} imply the existence of a psc-metric on $W\cup_{\Sigma'\amalg\widetilde{\Sigma}} M_2$ with mean convex boundary, see \cref{fig:weak-inheritance}.
  The proof is finished by the observation that $W\cup_{\Sigma'\amalg\widetilde{\Sigma}} M_2$ is diffeomorphic to $M_1\cup_{\Sigma'} M_2$.
  \begin{figure}[ht]
    \scalebox{0.8}{
    \begin{tikzpicture}
      \node at (-3.7,0) {\includegraphics[width=.5\textwidth]{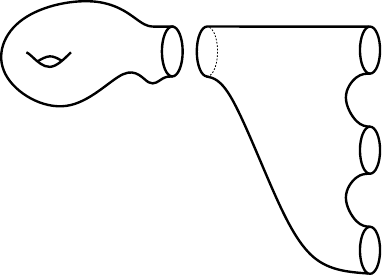}};
      \node at (-5.2,2.5){$M_1$};
      \node at (-2,2.4){$M_2$};
      \node at (-3.9,2.3){$\Sigma'$};
      \node at (-.5,2.4){$\Sigma$};
      \node at (-0.2,1.2){$\Sigma_1$};
      \node at (-0.2,-0.4){$\Sigma_2$};
      \node at (-0.2,-2){$\Sigma_3$};
      
      \node at (3.7,0) {\includegraphics[width=.5\textwidth]{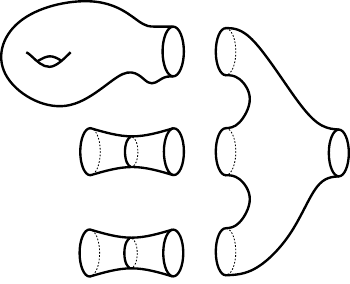}};
      \node at (2,2.8){$M_1$};
      \node at (6,1.9){$M_2$};
      \node at (2.3,-3.2){$\underbrace{\qquad\qquad\qquad\qquad}$};
      \node at (2.3,-3.7) {$=W$};
      \node at (6.8,1){$\Sigma_1$};
      \node at (4.1,1.3){$\Sigma'$};
      \node at (4.2,-0.5){$\Sigma_2$};
      \node at (4.2,-2.3){$\Sigma_3$};
      \node at (2.9,-.9){$\Sigma_2\times[-1,1]$};
      \node at (2.9,-2.7){$\Sigma_3\times[-1,1]$};
    \end{tikzpicture}}
    \caption{Reinterpreting the cobordism $M_2\colon \widetilde{\Sigma}\amalg\Sigma\leadsto \Sigma_1'$.}\label{fig:weak-inheritance}
  \end{figure}
\end{proof}

\section{Proof of \texorpdfstring{\cref{main:doubling-conjecture-spin-and-nonspin} and \cref{main:doubling-conjecture-almost-spin}}{the doubling conjecture}}\label{sec:proof-of-main}

\begin{proof}[Proof of \cref{main:doubling-conjecture-spin-and-nonspin}]
  We start by considering the case that $M$ is spin.
  Let $N_i\subset\partial M$ be a component and let $N_i\times[0,1]\subset M$ be a collar of $N_i$, where $N_i\times\{1\}$ corresponds to the boundary.
  We claim that $\ker\bigl((\iota_i)_\ast\colon\pi_1(N_i)\to\pi_1(M)\bigr)$ is normally finitely generated:
  By the split-injectivity assumption, $\pi_1(M) \cong \im(\iota_i)_\ast\times R$.
  The group $R$ is finitely generated since the projection $\pi_1(M)\to R$ is surjective and $\pi_1(M)$ is finitely generated.
  Let $\pi_1(M) = \scpr{a_1,\dots,a_n\ |\ r_1,\dots,r_m}$ be a presentation of $\pi_1(M)$ and let $b_1,\dots,b_l$ be a set of generators of $R$.
  Then $\scpr{a_1,\dots,a_n\ |\ r_1,\dots,r_m,b_1,\dots,b_l}$ is a presentation of $\im(\iota_i)_\ast$.
  Hence, $(\iota_i)_\ast\colon \pi_1(N_i)\to \im(\iota_i)_\ast$ is a surjective homomorphism from a finitely generated group to a finitely presented group and hence, $\ker(\iota_i)_\ast$ is normally finitely generated by \cite[Lemma 3.2]{Schick-Zenobi_Positive-scalar-curvature-due-to-the-cokernel-of-the-classifying-map}.

  \medskip

  We choose embedded loops $\alpha_1,\dots,\alpha_m\colon S^1\embeds N_i\times\{0\}$ whose representatives generate $K_i\coloneqq\ker\bigl(\pi_1(N_i\times\{0\})\to\pi_1(M)\bigr)$ normally.
  Since $M$ is orientable, all these loops have trivial normal bundle.
  Furthermore, there exist disjoint embedded $2$-handles $H_j\colon D^2\times D^{d-2}\embeds M\setminus \bigl(N_i\times(0,1]\bigr)$ such that $H_j|_{D^2\times\{0\}}$ extends $\alpha_j$ for each $j$, because $\dim(M)\ge5$ and the generators $[\alpha_j]$ are in the kernel of $(\iota_i)_\ast$.
  After smoothing corners, we obtain a codimension $0$ submanifold $W$ of $M$ given by
  \[W\coloneqq N_i\times[0,1]\cup\bigcup_{j=1}^m H_j(D^2\times D^{d-2})\colon N_i\leadsto \Sigma_i,\]
  where $\Sigma_i$ is obtained from $N_i$ by performing surgeries along  $H_j|_{S^1\times D^{d-2}}$ on $N_i$, see \cref{fig:2-handle}.
  The manifold $W$ consists of $2$-handles attached to $N_i$ and therefore the inclusion $N_i\embeds W$ is bijective on path components and surjective on $\pi_1$, hence it is $1$-connected.
  \begin{figure}[ht]
    \scalebox{.8}{
    \begin{tikzpicture}
      \node at (0,0) {\includegraphics[width=0.5\textwidth]{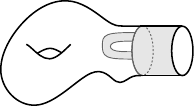}};
      \node at (-4,1) {$M$};
      \node at (1.2,1.5) {\textcolor{gray}{$W$}};
      \node(0) at (4,1.6) {$N_i\times[0,1]$};
      \node at (-0.4,0.6) {$\im(H_i)$};
      \node at (1.4,-1.5) {$\Sigma_i$};
      \node at (3,-1.5) {$N_i$};
      \draw[->, bend right=30] (0.west) to (2,1);
    \end{tikzpicture}}
    \caption{The codimension $0$ submanifold $W$ of $M$ consisting of $2$-handles (gray area) attached to $N_i$.}\label{fig:2-handle}
  \end{figure}
  Furthermore, $W$ is diffeomorphic to the manifold obtained by attaching $(d-2)$-handles to $\Sigma_i\times[0,1]$.
  This implies that the inclusion $\Sigma_i\embeds W$ is $2$-connected and therefore, the tangential $2$-type of $\Sigma$ extends to $M$ by \cref{rem:on-extendable-tangential-2-type}.
  By \cref{lem:inheritance-for-doubling-conjecture}, it thus suffices to prove the \cref{doubling-conjecture} for $M\setminus W$.

  \medskip

  Note, that $\pi_1(\Sigma_i)\cong \pi_1(N_i)/K$ and therefore, the inclusion $\Sigma_i\embeds M\setminus W$ induces a split-injection on $\pi_1$.
  By \cref{prop:tangential-2-type-extends}, the tangential $2$-type of the $\Sigma_i$ extends to $M\setminus W$.
  By performing the same procedure for every boundary component, we may reduce to the case that the tangential $2$-type of every boundary component of $M\setminus W$ extends to $M\setminus W$.

  \medskip

  An application of \cref{lem:doubling-conjecture-criterion-one-component} (in case $\partial M$ is path-connected) or \cref{lem:criterion-for-non-connected-boundaries} (if $\dim(M)\le11$) finishes the proof in the respective cases.
  If $\partial M$ is not path-connected and $\dim(M)\ge12$, we observe that the extra assumption from \cref{main:doubling-conjecture-spin-and-nonspin} implies that every component $\Sigma_i$ of $\partial(M\setminus W)$ is simply connected and admits a psc-metric by \cite[Theorem A]{stolz_simplyconnected}, except for possibly one, say $\Sigma_1$.
  For every $i\ge2$ we choose some thickened path connecting $\Sigma_i$ to $\Sigma_1$ and consider the codimension $0$ submanifold $W'\subset M\setminus W$ given by the union of a collar of the boundary and the chosen paths.
  We note that $W'$ is a cobordism from $\Sigma' \cong \Sigma_1\#\dots\#\Sigma_k$ to $\Sigma$ and the inclusions $\Sigma'\embeds W'$ and $\Sigma_1\embeds W'$ both induce isomorphisms on fundamental groups.
  Hence, $\Sigma_1\embeds W'$ is $1$-connected and the tangential $2$-type of $\Sigma'$ extends to $W'$. 
  \cref{lem:inheritance-for-doubling-conjecture} implies that it is sufficient to prove the \cref{doubling-conjecture} for $M\setminus(W\cup W')$, which has connected boundary $\Sigma'$.
  Thus, the spin-case is finished by an application of \cref{lem:doubling-conjecture-criterion-one-component}.

  \medskip

  If $M$ is totally nonspin, we first reduce to the case that every component of $\partial M$ is totally nonspin, too.
  By \cref{prop:criterion-for-spin-and-nonspin}, there exists an embedded $2$-sphere in the interior of $M$ with non-trivial normal bundle and we take a tubular neighborhood $T$ of this $2$-sphere.
  If $N_i$ is a component of $\partial M$, we perform the internal boundary connected sum $W'\coloneqq T\natural (N_i\times[0,1])$, that is, we take the union of $T$, a collar $N_i\times[0,1]$ of $N_i$ and a thickened path inside $M$ connecting $\partial T$ and $N_i\times\{0\}$, see \cref{fig:totally-nonspin}.

  \begin{figure}[ht]
    \scalebox{.8}{
    \begin{tikzpicture}
      \node at (0,0) {\includegraphics[width=0.5\textwidth]{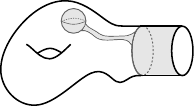}};
      \node(0) at (4,1.8) {$N_i\times[0,1]$};
      \node at (-1.5,1) {$T$};
      \node at (1.4,-1.5) {$N_i\times\{0\}$};
      \node at (3,-1.5) {$N_i$};
      \draw[->, bend right=30] (0.west) to (2,1);
    \end{tikzpicture}}
    \caption{The cobordism $W'$ from $\Sigma$ to a totally nonspin-manifold $\Sigma'$.}\label{fig:totally-nonspin}
  \end{figure}
  
  \medskip

  Note, that $W'$ is a cobordism $\Sigma_i\leadsto N_i$ for $\Sigma_i\coloneqq N_i\# \partial T$.
  Since $\partial T$ is simply connected and totally nonspin, the manifold $\Sigma_i$ also is totally nonspin and $\pi_1(\Sigma_i)\cong\pi_1(W')$.
  Hence, the tangential $2$-type of $\Sigma_i$ extends to $W'$ by \cref{prop:tangential-2-type-extends}.
  Furthermore, the inclusion $N_i\embeds W'$ is $1$-connected.
  By \cref{lem:inheritance-for-doubling-conjecture} it thus suffices to show that $M\setminus W'$ satisfies the \cref{doubling-conjecture}.
  Iterating this construction we may assume that every boundary component is totally nonspin.

  \medskip

  By the same argument as above, we may further reduce to the case that the inclusion of every boundary component induces a split-injection on $\pi_1$.
  By \cref{prop:tangential-2-type-extends}, the tangential $2$-type of every boundary component extends to $M$.
  The \cref{doubling-conjecture} follows from \cref{lem:doubling-conjecture-criterion-one-component} (if $\partial M$ is connected), \cref{lem:criterion-for-non-connected-boundaries} (if $\dim(M)\le 11$) or from the combination of \cref{lem:inheritance-for-doubling-conjecture} and \cref{lem:doubling-conjecture-criterion-one-component} as before.
\end{proof}

\noindent The proof for the almost spin case is very similar.

\begin{proof}[Proof of \cref{main:doubling-conjecture-almost-spin}]
  As in the proof of \cref{main:doubling-conjecture-spin-and-nonspin}, we may assume that the inclusion of every boundary component induces a split-injection on fundamental groups.
  By \cref{prop:tangential-2-type-extends}, the tangential $2$-type of every component of $\partial M$ extends to $M$.
  Hence, an application of \cref{lem:doubling-conjecture-criterion-one-component} or \cref{lem:criterion-for-non-connected-boundaries} finishes the proof in this case.
\end{proof}

Let us now explain how to adjust our techniques to prove the $4$-dimensional result from \cref{main:dimension-4}.

\begin{proof}[Proof of \cref{main:dimension-4}]
  For $(i)$ let $M$ be a $4$-manifold whose boundary consists of a disjoint union of $3$-spheres and let $g_\double$ be a psc-metric on $\double M$.
  Fix one boundary component $S\subset\partial M$, a collar $S\times[0,1]\subset M$, and choose disjoint embedded paths $\gamma_i$ connecting $S$ to the other boundary components.
  We perform internal boundary connected sums along $\gamma_i$ to connect the respective collars.
  This yields a codimension $0$ submanifold $W\subset M$, whose incoming boundary is given by the connected sum of the components of $\partial M$, which is diffeomorphic to the $3$-sphere, while the outgoing boundary is given by $\partial M$.
  We observe that $\double M$ contains $\double W$ as a submanifold, and we can perform surgery along the doubles $\double\gamma_i$  of the connecting paths $\gamma_i$ to turn this into a cylinder.
  Since these are surgeries of codimension $3$, the resulting manifold carries a psc-metric.
  Hence, we obtain a psc-metric on $M\setminus W\cup (S^3\times[0,1])\cup (M\setminus W)^\op \cong \double (M\setminus W)$.
  This implies that $M\setminus W$ admits a psc-metric with mean convex boundary by assumption.

  \medskip

  As in the proof of \cref{lem:inheritance-for-doubling-conjecture} (see \cref{fig:weak-inheritance}), there exists a psc-metric $g'$ on $(M\setminus W)\amalg \bigl((\partial M\setminus S)\times[-1,1]\bigr)$ with mean convex boundary.
  Furthermore, the cobordism $W$ only consists of $0$-handles.
  Therefore, by \cref{lem:cobordism-principle-for-hpsc-bounding} we can extend $g'$ over $W$ to a psc-metric on $M$ with mean convex boundary.

  \medskip

  In order to prove \cref{main:dimension-4} $(ii)$, we first note that $\pm K3$ admits a handle decomposition with one $0$- and $4$-handle and without $1$- and $3$-handles, see \cite[\S 2]{Harer-Kas-Kirby_Handlebody-decompositions-of-complex-surfaces}.
  Therefore, the same is true for the connected sum of any number of copies of these and by \cref{lem:cobordism-principle-for-hpsc-bounding}, $(m\cdot K3)\setminus D^4$ admits positive scalar curvature with positive mean curvature at the boundary for every $m\in\bbZ$.
  By \cref{thm:cobordism-theorem-for-psc}, the connected sum of manifolds carrying a psc-metric again admits a psc-metric, which completes the proof.

  \medskip

  For the proof of $(iii)$ we use \cite[Theorem C]{kreck_surgery-and-duality}, which states the following:
  If $M_1$ and $M_2$ are closed $4$-manifolds with the same Euler-characteristic and with the same (stable) tangential $2$-type $\theta$\footnote{
    Kreck's result has an assumption on the stable normal $2$-type of $M_1$ and $M_2$.
    Since stable normal structures are equivalent to stable tangential structures, this is equivalent to working with the tangential $2$-type as defined in \cref{def:tangential-2-types}.
  }, which are $\theta$-cobordant, then $k(S^2\times S^2)\#M_1$ and $k(S^2\times S^2)\#M_2$ are diffeomorphic for some $k\ge0$.

  \medskip
  
  If $M$ is spin and $\pi_1(M)$ is isomorphic to the free group $F_n$ for some $n\ge0$, then the tangential $2$-type is given by $\bspin\times BF_n \simeq \bspin\times (S_1\vee\dots\vee S^1)$.
  The inclusion homomorphism $\Omega^{\Spin}_4\embeds\Omega^{\Spin}_4(BF_n)$ of the respective spin cobordism groups is an isomorphism by the Atiyah--Hirzebruch spectral sequence, because $F_n$ has homological dimension $1$ (or $0$ if $n=0$).
  Therefore, the manifold $M$ is $\theta$-cobordant to $m\cdot K3\# n\cdot(S^1\times S^3)$, where $m=-\sign(M)/16$.
  By Kreck's result, there exist $k,\tilde k\ge0$ such that $k(S^2\times S^2)\#M$ is diffeomorphic to $\tilde k(S^2\times S^2)\#m\cdot K3\# n(S^1\times S^3)$ and the latter admits a psc-metric with mean convex boundary.
  Hence, we obtain a psc-metric with mean convex boundary on $k(S^2\times S^2)\#M$.

  \medskip

  If $M$ is totally nonspin and $\pi_1(M)$ has homological dimension at most $3$, the Atiyah--Hirzebruch spectral sequence reveals that the inclusion $\Omega_4\embeds\Omega_4(B\pi_1(M))$ is an isomorphism, where $\Omega_4$ denotes the oriented $4$-th oriented cobordism group.
  Taking the connected sum of one copy of $S^1\times S^3$ for every generator of $\pi_1(M)$ and performing $1$-surgeries on the resulting manifold to realize the (finite number of) relations of $\pi_1(M)$, we obtain a closed, oriented $4$-manifold $M_0$ with $\pi_1(M_0)\cong \pi_1(M)$, which admits a psc-metric.
  For appropriately chosen $n,m\in\bbN$, the manifold $M$ is $\theta$-cobordant to $M_0\#m\cp2\#n\bcp2$ which has the same tangential $2$-type as $M$.
  As above, there exist $k,\tilde k\ge0$ such that $k(S^2\times S^2)\#M$ is diffeomorphic to $\tilde k(S^2\times S^2)\#M_0\#m\cp2\#n\bcp2$, and the proof is finished as in the spin case.
\end{proof}

\section{Minimizing separating hypersurfaces}\label{sec:separating-hypersurfaces}
In this section we investigate \cref{que:minimal-hypersurface}.
Before proving \cref{main:minimal-surfaces}, let us discuss two examples which provide negative answers to \cref{que:minimal-hypersurface}.
These illustrate that there cannot be an affirmative answer without additional assumptions.

\begin{example}\label{ex:not-stable-minimal}
  There exists a closed manifold $M$ admitting positive scalar curvature and a closed hypersurface $\Sigma\subset M$ such that $\Sigma$ is not stable minimal with respect to any psc-metric on $M$:

  Let $A$ be a manifold such that $\Sigma\coloneqq A\times S^1$ does not admit positive scalar curvature, for example $A$ could be a product of $K3$-surfaces and tori.
  Let $M\coloneqq A\times S^2$ which contains $\Sigma$ as the product of $A$ and the equator in $S^2$.
  Then $M$ admits positive scalar curvature but $\Sigma$ cannot be stable minimal with respect to any psc-metric on $M$, because the induced metric on $\Sigma$ would then be conformal to a psc-metric, see \cite[Proof of Theorem 1]{schoenyau_classical}.
\end{example}

\begin{example}\label{ex:not-minimal}
  There exists a closed manifold $M$ admitting positive scalar curvature and a closed hypersurface $\Sigma\subset M$ such that $\Sigma$ is not minimal with respect to any psc-metric on $M$:
  
  Let $\beta = K3\times\dots\times K3$, which is a simply connected, closed spin manifold with non-vanishing $\ahat$-genus and consider for $n\ge1$
  \begin{align*}
    M_1&\coloneqq (T^n\setminus D^n)\times \beta\\
    M_2&\coloneqq M_1 \# X,
  \end{align*}
  for $X$ a closed, simply connected nonspin manifold of dimension $(n+\dim\beta)$, for example, $\cp2\times S^{n+\dim(\beta)-4}$ if $\dim(\beta)\ge5$ or $n\ge2$.
  We define 
  \[\Sigma\coloneqq S^{n-1}\times\beta=\partial M_1=\partial M_2\quad\text{and}\quad M\coloneqq M_1\cup_\Sigma M_2.\]
  The manifold $M$ is totally nonspin and its fundamental group is given by $\bbZ^n\ast\bbZ^n$, which has homological dimension $n<\dim(M)$.
  Therefore, $M$ admits a metric of positive scalar curvature by \cite[Theorem 1.2]{fuehring_bordism-and-projective-space-bundles}, see also \cite[Theorem A.1]{GromovHanke_torsion-obstructions-to-positive-scalar-curvature}.
  However, the hypersurface $\Sigma$ cannot be minimal for any psc-metric on $M$.
  Otherwise, we could cut open $M$ along $\Sigma$, and we would obtain a psc-metric on $M_1$ with minimal boundary.
  By \cite[Corollary 4.3]{Baer-Hanke_boundary-conditions-for-scalar-curvature}, this psc-metric could be deformed into one which is doubling, that is it would induce a psc-metric on the double of $M_1$.
  But $\double M_1 = (T^n\#-T^n)\times\beta$ is the product of an enlargeable spin manifold and a spin manifold with non-vanishing $\ahat$-genus and hence cannot admit positive scalar curvature by \cite[Theorem 2.19]{Baer-Hanke_boundary-conditions-for-scalar-curvature}.
\end{example}
  
Both of these example crucially rely on the fact that the tangential $2$-type of the $\Sigma$ does not extend to $M$.
In \cref{ex:not-minimal}, we furthermore have different tangential $2$-types for $M_2$ and $M_1^\op$, and in particular they are not diffeomorphic.
Therefore, this construction does not yield a counterexample to the \cref{doubling-conjecture}.

\medskip  

Let us now turn to the proof of \cref{main:minimal-surfaces}.
Again, this follows from a more general criterion involving tangential structures.

\begin{lemma}\label{thm:criterion-for-minimal-surface}
  Let $M$ be a closed oriented manifold, let $g$ be a psc-metric on $M$ and let $\Sigma\subset M$ be a two-sided connected hypersurface.
  Assume that the tangential $2$-type of $\Sigma$ extends to $M$. 
  If $\Sigma$ is non-separating, assume further that $\dim(M)\le 11$.
  Then: 
  \begin{enumerate}
    \item If $\dim(M)\ge5$, then there exists a psc-metric $\widetilde{g}$ such that $\Sigma$ is minimal with respect to $\widetilde{g}$. The metric $\widetilde{g}$ can be chosen to agree with $g$ outside a tubular neighborhood of $\Sigma$.
    \item If $\dim(M)\ge6$, then there exists a psc-metric $\widetilde{g}$ which is of the form $g_\Sigma + \dt^2$ near $\Sigma$. 
    If $\Sigma$ is separating, the metric $\widetilde{g}$ can be chosen to agree with $g$ outside a tubular neighborhood of $\Sigma$.
  \end{enumerate}
\end{lemma}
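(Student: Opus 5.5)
We reduce to the constructions already carried out in the proofs of \cref{lem:doubling-conjecture-criterion-one-component} and \cref{lem:criterion-for-non-connected-boundaries}, using \cref{rem:construction-is-local}. Let $\theta$ be the tangential $2$-type of $\Sigma$ and fix $\ell_\Sigma,\ell_M$ as in \cref{def:extendable-tangential-2-type}. The proof splits into the separating and the non-separating case.

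\emph{Separating case.} Write $M=M_-\cup_\Sigma M_+$. The structure $\ell_M$ restricts to $\theta$-structures on $M_\pm$ extending $\ell_\Sigma$, so the tangential $2$-type of $\Sigma=\partial M_\pm$ extends to $M_\pm$.
\begin{itemize}
\item First, using \cite[Appendix B]{hebestreitjoachim_twisted-spin-cobordism-and-positive-scalar-curvature} or \cite[Proposition 6.3]{ebertfrenck}, replace $M_+$ by a $\theta$-cobordant $M_+'$ relative to $\Sigma$ with $\Sigma\embeds M_+'$ $2$-connected. This gives $(M_+')^\op\cup_\Sigma M_+\sim\Sigma\times[0,1]$ relative to the boundary, by \cite[Proposition 3.25]{actionofmcg}.
\item By \cref{cor:boundaries-with-hpsc-metrics}, $M_+'$ carries a psc-metric with strictly mean convex boundary. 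Apply \cite[Corollary 4.3]{Baer-Hanke_boundary-conditions-for-scalar-curvature} to make it doubling (for (i)), or product near the boundary (for (ii), via \cref{thm:cobordism-theorem-for-psc}/\cref{cor:doubling-for-extendable-tangential-two-type} when $\dim\ge6$).
\item Glue this metric, reflected, onto $g|_{M_+}$ along $\Sigma$.
\item Apply \cref{prop:psc-from-doubles-to-cylinders} with \cref{rem:extension-of-surgery-principle}(i),(ii) to the piece $M_+\cup_\Sigma (M_+')^\op$. This yields a psc-metric on a collar $\Sigma\times[0,1]$ that agrees with $g$ near its outer end. Near its inner end it agrees with the reflected doubling (resp. product) metric, so $\Sigma$ is minimal (resp. the metric is $\dt^2+h$ there).
\item Repeat on the $M_-$ side, or alternatively double the metric across $\Sigma$. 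Here care is needed so that the two sides match along $\Sigma$. I would handle this exactly as in the last step of the proof of \cref{lem:criterion-for-non-connected-boundaries}: glue a flipped doubling cylinder $g_{\rm cyl}^\op$ to equalize the boundary restrictions.
\end{itemize}
All changes happen inside a tubular neighborhood of $\Sigma$, since $M_+'$ only enters through a cylinder that is $\theta$-cobordant to the trivial one.

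\emph{Non-separating case.} Cut $M$ open along $\Sigma$ to obtain a self-cobordism $W\colon\Sigma\leadsto\Sigma$ with the metric $g$, which glues smoothly. This is precisely the set-up of the proof of \cref{lem:criterion-for-non-connected-boundaries} with $M_0=W$. Since $\dim\le11$, \cref{lem:hypersurfaces-in-covers} provides an area-minimizing separating $S$ in $m\cdot W$. We then run the same chain of steps:
\begin{itemize}
\item Baer--Hanke deformation along $S$.
\item Attaching $(M_i')^\op$ using \cref{lem:cobordism-principle-for-hpsc-bounding}.
\item \cref{prop:psc-from-doubles-to-cylinders}, producing $g_3$ on $W$.
\item Fixing the boundary restrictions with $g_{\rm cyl}^\op$.
\end{itemize}
This produces the metric $\overline g_4$ on $M$, which agrees with $g$ away from $\Sigma$ and has $\Sigma$ minimal, proving (i).

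For (ii) in this case, we replace the final Baer--Hanke step by the product-type version, \cref{cor:doubling-for-extendable-tangential-two-type}, in dimension $\ge6$. Here we only claim existence, not locality. The cylinder-replacement steps use \cref{thm:cobordism-theorem-for-psc} relative to the boundary rather than the mean-convex surgery, and those may alter the metric globally.

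\emph{Main obstacle.} The main difficulty is ensuring the extension hypothesis in each auxiliary piece. One must check that $\theta$-extendability of $\Sigma$ passes to $M_\pm$, to $m\cdot W$ and to $M_1,M_2$, and that the reflected pieces are $\theta$-bordant to cylinders relative to the boundary, so that \cref{prop:psc-from-doubles-to-cylinders} applies. I would verify this by restricting $\ell_M$, respectively pulling it back along the covering $\overline W_m\to M$, and checking that restriction to each copy of $\Sigma$ is homotopic to $\ell_\Sigma$.
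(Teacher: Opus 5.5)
You have two genuine gaps. The first is the final matching step of the separating case, which affects both (i) and (ii). The second is part (ii) in the non-separating case. Your non-separating argument for (i) is the paper's.

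\textbf{Separating case.} The first four steps are sound once $\cup_\Sigma$ is read as $\amalg$. Both $M_+$ and $M_+'$ have boundary exactly $\Sigma$, so $(M_+')^\op\cup_\Sigma M_+$ is closed. What is $\theta$-bordant rel boundary to $\Sigma\times[0,1]$ is $M_+\amalg(M_+')^\op\colon\Sigma\leadsto\Sigma$, with metric $g|_{M_+}\amalg h^\op$. With that correction you get a metric $\hat g$ on $M_-$ that equals $g$ away from a collar and is doubling (resp.\ product) at $\Sigma$. This is a slightly more direct substitute for the paper's first step, which goes through a psc-metric on $\double M_1$, \cref{lem:doubling-conjecture-criterion-one-component} and Bär--Hanke.

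The gap is the final matching. Doubling $\hat g$ across $\Sigma$ gives a metric on $\double M_-$, not on $M$. Repeating the construction on the other side produces a second germ, coming from $M_-'$. The flipped-cylinder trick of \cref{lem:criterion-for-non-connected-boundaries} cannot reconcile the two germs. That trick needs the manifold carrying both germs, here $M_+\amalg M_-$ (or $M_+'\amalg M_-'$), to be $\theta$-bordant rel boundary to $\Sigma\times[0,1]$. This essentially means $M$ is $\theta$-nullbordant, which fails for example for $\Sigma=S^7$ bounding a ball in $M=\mathbb{HP}^2$. In \cref{lem:criterion-for-non-connected-boundaries} the trick works only because both germs sit on the relative double $M_0$.

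The missing step is a second application of \cref{prop:psc-from-doubles-to-cylinders}, this time to the double $M_-^\op\amalg M_-$ with metric $\hat g^\op\amalg g|_{M_-}$. This is the paper's step with $\double M_1\amalg M$. The resulting collar interpolates between the reflected germ of $\hat g$ and the germ of $g$ along $\Sigma$. It therefore glues onto $(M_-,\hat g)$ on one side and onto $(M_+,g|_{M_+})$ on the other, and the $M_+$ side is never modified.

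\textbf{Non-separating case, part (ii).} The argument does not work as stated, for two reasons.
\begin{enumerate}
\item \cref{cor:doubling-for-extendable-tangential-two-type} requires a connected boundary whose $2$-type extends. But $M_0$, $M_1$ and $M_2$ all have disconnected boundary, so by \cref{rem:on-extendable-tangential-2-type}(iii) it never applies.
\item \cref{thm:cobordism-theorem-for-psc} relative to the boundary keeps the boundary germ fixed. Gajer's extension (\cref{rem:extension-of-surgery-principle}(iii)) needs a cylindrical germ to start from.
\end{enumerate}
So nothing in your chain turns the mean convex germ at $S$ into a product collar.

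This is not a technicality. A psc-metric of the form $g_\Sigma+\dt^2$ near $\Sigma$ forces $\scal(g_\Sigma)>0$. In the separating case this is free: $\Sigma$ bounds $M_+'$ with $2$-connected inclusion, so $M_+'$ carries a product-form psc-metric. A non-separating $\Sigma$, however, need not be $\theta$-nullbordant, and a merely minimal hypersurface need not carry psc (cf.\ \cref{rem:dim-restriction-is-necessary}).

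The ingredient you are missing is that $S$ is area-minimizing, hence stable, hence Yamabe-positive \cite{schoenyau_classical}. The paper then uses \cite[Corollary B]{Akutagawa-Botvinnik_Manifolds-of-positive-scalar-curvature-and-conformal-cobordism-theory} to make the metric on $M_2\cup_\Sigma M_0\cup_\Sigma M_1$ a product near $S$. After that, Gajer's extension across $(M_i')^\op$ and \cref{prop:psc-from-doubles-to-cylinders} give a product-form metric on $M_0$ with matching ends. This costs locality, as you anticipated.
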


\begin{remark}\label{rem:dim-restriction-is-necessary}
  The dimension restriction in the second part of \cref{thm:criterion-for-minimal-surface} is necessary as shown by the following example: 

  By \cite[Theorem 3]{hanke-kotschick-wehrheim_dissolving-four-manifolds-and-positive-scalar-curvature}, there exists a simply connected, spin $4$-manifold $\Sigma$ with vanishing signature, non-vanishing Seiberg--Witten invariant and arbitrarily large $b^+$.
  Hence, $\Sigma$ does not admit a metric of positive scalar curvature and is oriented nullbordant.The natural map $\Omega_4^\Spin\to \Omega_4$ from the fourth spin cobordism group to the fourth oriented cobordism group is injective, so there is a $5$-dimensional spin manifold $M$ with boundary $\Sigma$.
  After performing surgeries on the interior of $M$, we may assume that $M$ is also simply connected.
  The double of $M$ is a simply connected spin $5$-manifold, hence, the tangential $2$-type of $\Sigma$ extends to the double $\double M$.
  Furthermore, $M$ admits a psc-metric with strictly mean convex boundary by \cref{cor:boundaries-with-hpsc-metrics}, which can be deformed to be doubling by \cite[Corollary 4.3]{Baer-Hanke_boundary-conditions-for-scalar-curvature}.
  Therefore, $\Sigma$ is minimal with respect to the doubled metric.
  However, if there was a psc-metric on $\double M$ for which $\Sigma$ is a stable minimal hypersurface, the induced metric on $\Sigma$ would be conformal to a psc-metric, see \cite[Proof of Theorem 1]{schoenyau_classical}.
\end{remark}

\begin{proof}[Proof of \cref{thm:criterion-for-minimal-surface}]
  We first handle the case that $\Sigma$ is separating, say $M = M_1\cup_\Sigma M_2$.
  Consider the disjoint union $M\amalg M^\op$ and note, that $M_2\amalg M_2^\op\subset M\amalg M^\op$.
  An application of \cref{prop:psc-from-doubles-to-cylinders} yields a psc-metric on $\double M_1$, see \cref{fig:remove_dm2}.
  \begin{figure}[ht]
    \scalebox{.9}{
    \begin{tikzpicture}
      \node at (0,0) {\includegraphics[width=1\textwidth]{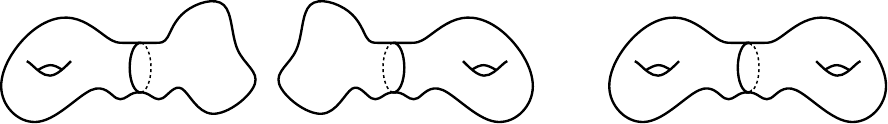}};
      \node at (-6,1.2) {$M$};
      \node at (-1,1.2) {$M^\op$};
      \node(0) at (4.4,1.2) {$\double M_1$};
      \node at (-5.5,-1.1) {$M_1$};
      \node at (-3.6,-1.1) {$M_2$};
      \node at (-4.5,0.9) {$\Sigma$};
      \node at (1.85,0) {$\sim$};
      \node at (-2.5,-1.6) {$\underbrace{\quad\qquad\quad\qquad\qquad\qquad}_{=\  M_2\,\amalg\, M_2^\op\ \sim\  \Sigma\times[0,1]}$};
    \end{tikzpicture}}
    \caption{Obtaining $\double M_1$ from the double of the closed manifold $M$.}\label{fig:remove_dm2}
  \end{figure}

  \medskip

  Since the tangential $2$-type of $\Sigma$ extends to $M$, it also extends to $M_1$ and by \cref{lem:doubling-conjecture-criterion-one-component}, there is a psc-metric on $M_1$ with mean convex boundary.
  By \cite[Corollary 4.3]{Baer-Hanke_boundary-conditions-for-scalar-curvature}, this can be deformed in a neighborhood of the boundary to a psc-metric $g_1$, which is doubling.
  In particular, $\Sigma$ is minimal with respect to $g_1\cup g_1^{\op}$.

  \medskip

  Next, consider the manifold $\double M_1\amalg M$ which carries the psc-metric $(g_1\cup g_1^\op)\amalg g$.
  Again, by \cref{prop:psc-from-doubles-to-cylinders}, we get a psc-metric $\widetilde g$ on $M\cong M_1\cup_{\Sigma}\Sigma\times[0,1]\cup_\Sigma\cup M_2\sim \double M_1\amalg M$ such that the restriction of $\widetilde g$ to $M_1$ equals $g_1$, see \cref{fig:back_to_m}.
  Therefore, $\widetilde g$ satisfies all the properties claimed above.
  \begin{figure}[ht]
    \scalebox{.9}{\begin{tikzpicture}
      \node at (0,0) {\includegraphics[width=1\textwidth]{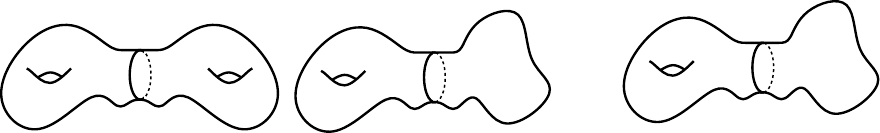}};
      \node at (4,1.1) {$M$};
      \node at (-1,1) {$M$};
      \node at (-5.5,1) {$M_1$};
      \node at (-3,1) {$M_1^\op$};
      \node at (-4.4,-0.9) {\footnotesize $\Sigma$};
      \node at (-5,-0.1) {$g_1$};
      \node at (4.1,0) {$g_1$};
      \node at (-4.2,0.6) {$g_1\cup g_1^\op$};
      \node at (0,0.6) {$g$};
      \node at (4.6,0.6) {$\tilde g$};
      \node at (2,0) {$\sim$};
      \node at (-1.4,-1.8) {$\underbrace{\quad\qquad\qquad\qquad\qquad\qquad\qquad\qquad\qquad}_{=\  M_1^\op\,\amalg\, M_1\,\cup_\Sigma\, M_2\ \sim\  \Sigma\times[0,1]\,\cup_\Sigma\, M_2\ \cong\  M_2}$};
    \end{tikzpicture}}
    \caption{Constructing the required psc-metric $\tilde g$ on $M$}\label{fig:back_to_m}
  \end{figure}

  \medskip

  If $\dim M\ge 6$, we can use \cref{cor:doubling-for-extendable-tangential-two-type} instead of \cref{lem:doubling-conjecture-criterion-one-component} to obtain a psc-metric $g_1$ on $M_1$ which is of product form near the boundary.
  Performing the second step as before, we obtain a psc-metric $\widetilde{g}$ which is of product form in a neighborhood of $\Sigma$.

  \medskip

  In the case that $\Sigma$ is non-separating, cutting $M$ open along $\Sigma$ yields a self-cobordism $M_0\colon \Sigma\leadsto \Sigma$.
  By \cref{lem:hypersurfaces-in-covers}, there exists a separating, area-minimizing hypersurface $S\subset m\cdot M_0$ for some $m\ge1$.
  As in the proof of \cref{lem:criterion-for-non-connected-boundaries}, we have a decomposition
  \[m\cdot M_0 = M_1\cup_S M_2,\]
  and we consider the cobordism 
  \[(M_2')^\op\cup_S M_2\cup_\Sigma M_0\cup M_1\cup_S(M_1')^\op\colon \Sigma\leadsto \Sigma,\]
  see \cref{fig:double-at-hypersurfaces}.
  By the same argument as in the proof of \cref{lem:criterion-for-non-connected-boundaries}, this cobordism admits a psc-metric with minimal boundary.

  \medskip

  Since $(M_2')^\op\cup_S M_2$ and $M_1\cup_S(M_1')^\op$ are both $\theta$-cobordant to cylinders over $\Sigma$, we obtain a psc-metric on $M_0$ with stably minimal boundary and whose boundary restrictions can be made to agree, see the argument around \cref{fig:back-to-collars}.
  This can be turned into a doubling psc-metric by an application of \cite[Corollary 4.3]{Baer-Hanke_boundary-conditions-for-scalar-curvature}, and we can glue the ends together to obtain a psc-metric on $M$ for which $\Sigma$ is a minimal hypersurface.
  
  \medskip

  For the final claim, we observe that the hypersurface $S\subset m\cdot M_0$ is area minimizing.
  Therefore, the psc-metric on $M_2\cup_\Sigma M_0\cup M_1$ constructed above has stably minimal and hence Yamabe-positive boundary.
  An application of \cite[Corollary B]{Akutagawa-Botvinnik_Manifolds-of-positive-scalar-curvature-and-conformal-cobordism-theory} yields a psc-metric on $M_2\cup_\Sigma M_0\cup M_1$ which is of product form near the boundary.\footnote{Note that this conformally changed metric need not agree with the original metric on $M$ away from $\Sigma$.}
  Since $M_1\colon S\leadsto \Sigma$ and $M_2^\op\colon S\leadsto \Sigma$ are $\theta$-cobordisms for $\theta$ the tangential $2$-type of $\Sigma$, we can perform surgeries on $M_1$ and $M_2^\op$ to obtain $M_1'$ and $M_2'$  for which the respective inclusions of $\Sigma$ $2$-connected and by \cref{rem:extension-of-surgery-principle} $(iii)$ there exists a psc-metric on $(M_2')^\op\cup_S M_2\cup_\Sigma M_0\cup M_1\cup_S(M_1')^\op$ which is of product form near the boundary.
  As before, $M_1\cup_S(M_1')^\op$ and $(M_2')^\op\cup_S M_2$ are both $\theta$-cobordant to cylinders over $\Sigma$, and we can hence apply \cref{prop:psc-from-doubles-to-cylinders} to obtain a psc-metric on $M_0$ which is of product form near the boundary and whose boundary restrictions can be made to agree.
  Gluing together the ends of $M_0$ yields the required psc-metric on $M$ which is of product type in a neighborhood of $\Sigma$.
\end{proof}

\begin{proof}[Proof of \cref{main:minimal-surfaces}]
  By \cref{prop:tangential-2-type-extends}, the tangential $2$-type of $\Sigma$ extends to $M$ in either case.
  Therefore, the requirements from \cref{thm:criterion-for-minimal-surface} are satisfied.
\end{proof}

\begin{remark}
  It is possible to derive an analogous result for almost spin manifolds from \cref{thm:criterion-for-minimal-surface}.
\end{remark}

\appendix
\crefalias{section}{appendix}

\section{The doubling conjecture for low-dimensional manifolds}\label{sec:low-dimensional}
The $2$-dimensional case of the doubling conjecture follows from the Gauß--Bonnet-theorem: The only two-dimensional double that admits positive scalar curvature is the $2$-sphere and the $2$-disk admits positive scalar curvature with mean convex boundary.
Furthermore, as observed in \cite[Section 7]{rosenberg-weinberger_positive-scalar-curvature-on-manifolds-with-boundary-and-their-doubles}, the $3$-dimensional case can be extracted from the following result of Carlotto--Li \cite{carlotto-li_constrained-deformations-of-positive-scalar-curvature-I}.

\begin{theorem}[{\cite[Theorem 2.1]{carlotto-li_constrained-deformations-of-positive-scalar-curvature-I}}]
  Let $M$ be a $3$-manifold with boundary $\partial M$ such that the double $\double M$ of $M$ admits positive scalar curvature.
  Then:
  \[M\cong P_{\gamma_1}\#\dots\#P_{\gamma_a}\#\faktor{S^3}{\Gamma_1}\#\dots\#\faktor{S^3}{\Gamma_b}\#\left(\underset{i=1}{\overset{c}{\#}} S^1\times S^2\right)\setminus \left(\coprod_{j=1}^d B_j^{3}\right),\]
  where $a,b,c,d\in\bbN$, $P_{\gamma_i}$ are handle bodies of genus $\gamma_i$, $\Gamma_i$ are finite subgroups of $\so(4)$ and $B_i$ are embedded balls.
\end{theorem}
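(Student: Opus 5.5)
The plan is to first identify $\double M$ via the classification of closed psc $3$-manifolds, and then recover $M$ from $\double M$ by compressing $\partial M$ with the loop theorem. Throughout I take $M$ compact, orientable and connected (otherwise argue componentwise).

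\emph{Step 1: identify $\double M$.} Since $\double M$ is closed and admits positive scalar curvature, Perelman's resolution of geometrization, together with the Schoen--Yau and Gromov--Lawson obstructions, gives
\[\double M\cong \faktor{S^3}{\Gamma_1}\#\dots\#\faktor{S^3}{\Gamma_k}\#\left(\underset{i=1}{\overset{l}{\#}} S^1\times S^2\right).\]
Hence $\pi_1(\double M)$ is a free product of finite groups and a free group. By the Kurosh subgroup theorem, every subgroup of $\pi_1(\double M)$ is again a free product of finite groups and a free group, so it contains no $\bbZ^2$ and no surface group of genus $\ge1$.

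\emph{Step 2: compress the boundary.} Let $F\subset\partial M$ be a component of genus $\ge1$, viewed as a two-sided surface in $\double M$. By Step 1, $\pi_1(F)\to\pi_1(\double M)$ is not injective. Its image, however, is a quotient of $\pi_1(F)$ that is also a free product as above. From this I would like to conclude that $F$ is compressible into one of the two sides $M$ or $M^\op$; by symmetry of the double, into $M$ itself. The argument I would try first is this: since $\pi_1(\double M)=\pi_1(M)\ast_{\pi_1(\partial M)}\pi_1(M^\op)$ (suitably iterated over the components), if $\pi_1(F)$ injected into both $\pi_1(M)$ and $\pi_1(M^\op)$, then by amalgamation it would inject into $\pi_1(\double M)$, which is impossible. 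The same argument, applied to all boundary components at once, controls the possibly non-connected case.

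\emph{Step 3: run the induction.} By the loop theorem (Dehn's lemma), a compressing disc $D\subset M$ exists. Cutting $M$ along $D$ yields $M'$ with $\partial M'$ of lower total genus, and $M$ is recovered from $M'$ by attaching a $1$-handle. The double $\double M'$ is obtained from $\double M$ by surgery on the circle $\double(\partial D)$ (the disc doubled to a $2$-sphere, turned around), and is still psc by Gromov--Lawson in codimension $3$. Inducting on total boundary genus, I reduce to the case where all boundary components are spheres. Capping these spheres with balls gives a closed manifold $\widehat M$; its double $\widehat M\#\widehat M^\op$ is psc, so by Step 1 and the uniqueness of prime decomposition $\widehat M$ is itself a connected sum of spherical space forms and copies of $S^1\times S^2$. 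Re-attaching the $1$-handles (internal to $M$ or connecting boundary components) produces exactly the handlebody summands $P_{\gamma_i}$, extra $S^1\times S^2$ summands, and the removed balls $B_j$, which is the claimed form.

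\emph{Main obstacle.} The hardest step is Step 2: showing that non-injectivity in $\double M$ forces compressibility on the $M$-side, uniformly for several boundary components. The amalgamation argument needs care when the boundary is disconnected, since the double is then a graph of groups rather than a single amalgam. If that fails, I would use instead the equivariant Meeks--Yau minimal-disc argument with respect to the reflection involution of $\double M$.
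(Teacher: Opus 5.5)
The paper does not prove this statement. It quotes it from Carlotto--Li and uses it only as input for the $3$-dimensional doubling conjecture, so your argument has to stand on its own. Its architecture is sound: classify $\double M$ via geometrization, compress $\partial M$ with the loop theorem, then read off $M$ via Kneser--Milnor. However, one step of the induction is justified incorrectly.

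\textbf{The gap.} In Step 3 you claim that $\double M'$ carries psc by Gromov--Lawson in codimension $3$. Cutting $M$ along the disc $D$ corresponds, in the double, to cutting $\double M$ along the embedded $2$-sphere $D\cup D^{\op}$ and capping the two new boundary spheres with balls. That is a surgery of codimension $1$ on $\double M$. Only the reverse operation $\double M'\leadsto\double M$, a $0$-surgery, has codimension $3$, and Gromov--Lawson transports psc only in that direction. As written, nothing guarantees that Step 1 can be re-applied to $\double M'$.

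The repair uses a tool you already invoke at the end. If $D$ does not separate $M$, then $\double M\cong\double M'\#(S^1\times S^2)$; if $D$ splits $M$ into $M_1'\amalg M_2'$, then $\double M\cong\double M_1'\#\double M_2'$. By uniqueness of prime decompositions, the prime summands of $\double M'$ are among those of $\double M$, hence again spherical space forms and copies of $S^1\times S^2$. Carry this property, not psc, through the induction. Similarly, for a connected $M''$ with $d\ge 1$ boundary spheres, $\double M''\cong\widehat M\#\widehat M^{\op}\#(d-1)(S^1\times S^2)$, not $\widehat M\#\widehat M^{\op}$. The Kneser--Milnor conclusion for $\widehat M$ is unaffected.

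\textbf{Smaller points.}
\begin{enumerate}
\item What you call the main obstacle is not one. The fold map $\double M\to M$ is a retraction, so $\pi_1(M)\hookrightarrow\pi_1(\double M)$, and by your Kurosh argument $\pi_1(M)$ contains no surface group of genus $\ge 1$. Hence every boundary component of positive genus has nontrivial kernel into $\pi_1(M)$, and the loop theorem applies directly. This works for any number of boundary components, without graphs of groups or an equivariant Meeks--Yau argument.
\item Total boundary genus does not decrease when $\partial D$ separates its component $F$, since then $g(F)=g_1+g_2$. Induct instead on a quantity such as $\sum_F g(F)^2$.
\item For the reassembly, isotope the attaching discs of earlier $1$-handles off later compressing discs, so that all handles attach to the sphere boundary of $M''$. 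The collars of these spheres together with the handles form handlebodies minus balls, glued to the capped pieces along spheres. This yields exactly the summands $P_{\gamma_i}$, the extra $S^1\times S^2$ summands (one per independent cycle of the gluing graph), and the removed balls.
\end{enumerate}
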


Since $P_{\gamma_i}$ consist of one $0$- and some $1$-handles, \cref{lem:cobordism-principle-for-hpsc-bounding} yields a psc-metric on $P_{\gamma_i}$ with mean convex boundary.
Connected sums of psc-manifolds again admit psc-metrics by the surgery theorem for positive scalar curvature \cite[Theorem A]{gl80a}.
Hence, $P_{\gamma_1}\#\dots\#P_{\gamma_a}\#\faktor{S^3}{\Gamma_1}\#\dots\#\faktor{S^3}{\Gamma_b}\#\left(\underset{i=1}{\overset{c}{\#}} S^1\times S^2\right)$ admits positive scalar curvature with mean convex boundary.
Employing the surgery theorem again, we can assume that such a metric equals a torpedo-metric on all balls $B_j$.
Removing these balls will create additional boundary components which are of product type, hence minimal.

\section{Extension of psc-metrics onto cobordisms}
This appendix contains proofs of two extension results related to positive scalar curvature. 
The first is a classical statement on extending positive scalar curvature metrics over cobordisms due to Gajer \cite{gajer}.
The second establishes that any smooth, orientable, nullbordant manifold occurs as the mean convex boundary of an orientable manifold admitting a positive scalar curvature metric, thereby establishing the existence of positive scalar curvature fill-ins with mean convex boundary for orientable manifolds.

\begin{prop}\label{prop:extension-to-cobordism}
    Let $W\colon M_0\leadsto M_1$ be a cobordism such that $\dim(W)\ge6$ and the inclusion $M_1\embeds W$ is $2$-connected. 
    Then, any psc-metric on $M_0$ extends to a psc-metric on $W$ with cylindrical boundary.
\end{prop}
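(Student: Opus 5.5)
Fix a psc-metric $g_0$ on $M_0$. The plan is to reduce the statement to the handle-by-handle form of the Gromov--Lawson surgery construction, as in Gajer's argument.

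\textbf{Handle decomposition.} Since $M_1\embeds W$ is $2$-connected and $\dim(W)\ge 6$, \cref{thm:wall-geometric-connectivity} with $r=2\le\dim(W)-4$ applies. It gives a handle decomposition of $W$ relative to $M_0$ with no handles of index $\ge \dim(W)-2$. Write $d=\dim(W)$. Then $W$ is obtained from $M_0\times[0,1]$ by attaching handles of index $k\le d-3$ along embeddings $S^{k-1}\times D^{d-k}\embeds M_0\times\{1\}$, successively. The handles can be ordered by index, and all attaching spheres in a given level can be made disjoint.

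\textbf{Induction over handles.} Let $W_j$ be the union of $M_0\times[0,1]$ and the first $j$ handles, and let $\partial_+W_j$ be its outgoing boundary. I will show inductively that $W_j$ carries a psc-metric $G_j$ with the following properties:
\begin{enumerate}
  \item $G_j=g_0+\dt^2$ near $M_0$;
  \item $G_j$ is of product form $h_j+\dt^2$ near $\partial_+W_j$, where $h_j$ is a psc-metric on $\partial_+W_j$.
\end{enumerate}

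For $j=0$, take $g_0+\dt^2$.

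For the inductive step, the next handle of index $k$ is attached along a sphere $S^{k-1}\subset\partial_+W_j$ of codimension $d-1-(k-1)=d-k\ge 3$. I use the Gromov--Lawson construction in its cobordism form, i.e.\ the trace of the surgery. This is the version in \cite{gajer} and also in \cite{ebertfrenck} or \cite[Theorem 6.1]{FrenckHankeHirsch2026}. First, isotope $h_j$ on $\partial_+W_j$ through psc-metrics to a metric that is standard (round times torpedo) near the attaching sphere. Realize this isotopy as a psc-concordance on a collar $\partial_+W_j\times[0,L]$; this is possible because psc-isotopic metrics are concordant, after stretching the collar. Then put on the handle $D^k\times D^{d-k}$ the standard psc-metric which near $S^{k-1}\times D^{d-k}$ matches the torpedo region $\times\dt^2$. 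It has product form near the new boundary and induces the GL-surgered metric $h_{j+1}$ on $\partial_+W_{j+1}$. Gluing gives $G_{j+1}$, and the metric near $M_0$ is untouched.

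\textbf{Conclusion.} After the last handle, $W\cong W_N$ carries a psc-metric extending $g_0$ with cylindrical ends at both boundaries.

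\textbf{Main obstacle.} The step I expect to require the most care is the existence of the psc-metric on the handle $D^k\times D^{d-k}$ itself. This metric must agree with the torpedo product near the attaching region, have positive scalar curvature, and be a product near the new boundary. The route I would take first is Gajer's/Chernysh's bent-torpedo construction. Its key estimate needs codimension $d-k\ge3$ so that the torpedo factor $D^{d-k}$ has enough positive curvature. This is exactly where the bound on handle indices from \cref{thm:wall-geometric-connectivity} enters. Alternatively, one can cite this step directly from \cite[Theorem 1.5]{ebertfrenck} or \cite{chernysh_on_the_homotopy_type_of_the_space}, as \cref{rem:extension-of-surgery-principle}~(iii) indicates.
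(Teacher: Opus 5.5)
Your proposal is correct and is essentially the paper's proof: Wall's theorem reduces $W$ to a sequence of surgery traces of codimension $\ge 3$, each boundary metric is isotoped through psc-metrics (via \cite[Theorem 1.2]{ebertfrenck}) to one that is round times torpedo near the attaching sphere, the isotopy is turned into a concordance that is cylindrical near both ends, and the handle is filled in. The step you flag as the main obstacle is actually immediate: once the attaching region is standard, the product $g_\tor+g_\tor$ of torpedo metrics on the handle already works, and it is psc because $d-k\ge 3$, so no bent-torpedo construction is needed. The genuinely nontrivial input is the isotopy to standard form, which is exactly what the paper cites.
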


\begin{proof}
    First, we decompose $W$ into traces of surgeries and by \cref{thm:wall-geometric-connectivity}, all of these surgeries can be assumed to have codimension at least $3$.
    Therefore, it suffices to consider the case $W=\tr(\varphi)$ for an embedding $\varphi\colon S^k\times D^{n-k}\embeds M_0$ with $n-k\ge3$.
    By \cite[Theorem 1.2]{ebertfrenck}, there exists an isotopy $(g_t)_{t\in[0,1]}$ of psc-metrics on $M_0$ such that
    \begin{enumerate}
        \item $g_t$ is constant in $t$ on a neighborhood of $\{0, 1\}$. 
        \item $\varphi^*g_1 = g_\circ + g_\tor$, for $g_\circ$ the round metric and $g_\tor$ a torpedo-metric\footnote{A torpedo-metric is an $\ort(n-k)$-invariant psc-metric on $D^{n-k}$ which restricts to the round metric on the boundary}.
    \end{enumerate}
    Since isotopic psc-metrics are concordant, there exists a psc-metric $G$ on $M_0\times[0,1]$ that is cylindrical near the boundary and extends $g_i$ on $M_0\times\{i\}$ for $i=0,1$.
    Furthermore, $G$ can be extended by the psc-metric $g_\tor + g_\tor$ on $D^{k+1}\times D^{n-k}$ onto $\tr(\varphi)$ by the second property above.
\end{proof}

\begin{theorem}\label{thm:every-manifold-bounds-hpsc-metrics}
  Let $\Sigma$ be a not necessarily connected but orientable manifold of dimension $d\ge5$ that is an oriented boundary.
  Then $\Sigma$ is the mean convex boundary of an oriented manifold $M$ of positive scalar curvature.
\end{theorem}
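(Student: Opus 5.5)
Since $\Sigma$ is an oriented boundary, we first choose a compact oriented manifold $W_0$ of dimension $d+1\ge 6$ with $\partial W_0=\Sigma$. We modify $W_0$ in its interior to make the inclusion $\Sigma\embeds W$ $1$-connected. Then \cref{cor:boundaries-with-hpsc-metrics} produces a psc-metric with strictly mean convex boundary, and $W$ serves as the required $M$.

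First, we arrange that $W_0$ is connected. If $\Sigma$ has several components, we take a connected oriented nullbordism: starting from any nullbordism, we join its components by interior $1$-surgeries, that is, boundary connected sums along thickened arcs in the interior (or by taking connected sums of the components). If $\Sigma$ is connected, each component of $W_0$ without boundary can simply be discarded.

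Second, we arrange surjectivity on $\pi_1$ from a chosen boundary component. Fix one component $\Sigma_1\subset\Sigma$. We must kill $\pi_1(W_0)$ modulo the image of $\pi_1(\Sigma_1)$. Choose finitely many embedded loops in the interior of $W_0$ representing generators of $\pi_1(W_0)$. Since $W_0$ is oriented, they have trivial normal bundles, and they can be made disjoint because $\dim W_0\ge 6$. Perform interior $1$-surgeries along them. This replaces $S^1\times D^{d}$ by $D^2\times S^{d-1}$ and preserves orientability. The result $W$ has $\pi_1(W)$ equal to a quotient in which all chosen generators die, so $\pi_1(W)$ is trivial, and the boundary is unchanged. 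Then $\pi_1(\Sigma_1)\to\pi_1(W)$ is surjective.

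The remaining issue is $\pi_0$. Being $1$-connected requires a bijection on $\pi_0$, but if $\Sigma$ is disconnected, $\partial W\embeds W$ cannot be a bijection on $\pi_0$. This is the main obstacle: \cref{cor:boundaries-with-hpsc-metrics} as stated concerns $\partial M\embeds M$. To get around it, I would argue via handles as in the proof of \cref{lem:inheritance-for-doubling-conjecture}. Regard $W$ as a cobordism $\Sigma\setminus\Sigma_1\leadsto \Sigma_1$; equivalently, view it from $\Sigma_1$. With $\Sigma_1\embeds W$ $1$-connected, \cref{thm:wall-geometric-connectivity} (applicable since $1\le (d+1)-4$) shows that $W$ has no handles of index $\ge d$ relative to $\Sigma_1$. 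Reading the decomposition from the other end, $W$ is built from a collar of $\Sigma\setminus\Sigma_1$ by handles of codimension $\ge 2$.

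Next, put on $(\Sigma\setminus\Sigma_1)\times[-1,1]$ a psc-metric with strictly mean convex boundary. For that we need psc on the other components, which is not available in general. So instead I would start from $\Sigma_1$ only after removing the other boundary components by the same trick: connect all components of $\Sigma$ by thickened arcs inside a collar. This gives a cobordism from $\Sigma_1\#\dots\#\Sigma_k$ to $\Sigma$ consisting only of $0$- and $1$-handles, whose ends are bijective on $\pi_0$ in the relevant sense. Then apply the connected-boundary case to a nullbordism of the connected sum $\Sigma'=\Sigma_1\#\dots\#\Sigma_k$. Finally, extend across this cobordism using \cref{lem:cobordism-principle-for-hpsc-bounding} after turning it around so that it consists of handles of index $\le d-2$ attached to the mean convex $\Sigma'$.

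Concretely, the plan is as follows. Find a connected oriented $W'$ with $\partial W'=\Sigma'$ and $\pi_1(W')=1$ by the surgeries above. \cref{cor:boundaries-with-hpsc-metrics} then gives a psc-metric with strictly mean convex boundary. Attach the cobordism $\Sigma'\leadsto\Sigma$. Seen from $\Sigma'$, it consists of $(d-1)$-handles (dual to the $1$-handles), and these are of index $\le d+1-2$, so \cref{lem:cobordism-principle-for-hpsc-bounding} applies. The resulting manifold has boundary $\Sigma$ and is the required $M$.
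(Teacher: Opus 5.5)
\textbf{There is a genuine gap in the disconnected case.} Your reduction is fine when $\Sigma$ is connected. When $\Sigma$ is disconnected, the last step fails, and it is not a mere miscount. In a $(d+1)$-dimensional cobordism the dual of a $1$-handle is a $d$-handle, not a $(d-1)$-handle. Seen from $\Sigma'=\Sigma_1\#\dots\#\Sigma_k$, your cobordism $\Sigma'\leadsto\Sigma$ is surgery on the neck spheres $S^{d-1}\times D^1\subset\Sigma'$. That means attaching handles $D^d\times D^1$ of codimension one, whereas \cref{lem:cobordism-principle-for-hpsc-bounding} (Lawson--Michelsohn) requires index $\le d-1$. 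The new boundary piece $D^d\times S^0$ has no small sphere factor, so the thin-tube construction gives nothing there.

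No other choice of cobordism rescues the plan either. Suppose $V\colon\Sigma'\leadsto\Sigma$ is built from $\Sigma'\times[0,1]$ by handles of index $\le d-1$. Turned upside down, it is built from $\Sigma\times[0,1]$ by handles of index $\ge 2$, so $\Sigma\embeds V$ is a bijection on $\pi_0$. Hence every component of $W'\cup_{\Sigma'}V$ has connected boundary. This is exactly the $\pi_0$-obstruction you identified, reappearing. It forces each component of $\Sigma$ to be individually null-bordant, which fails in general, e.g.\ for $\Sigma=\cp{4}\amalg\bcp{4}$.

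\textbf{The missing idea is the route you discarded.} You do not need psc on $\Sigma_i$ itself, only on some manifold in its oriented bordism class. The paper's construction goes as follows.
\begin{enumerate}
\item For $i\ge2$, choose a psc-manifold $(N_i,h_i)$ oriented cobordant to $\Sigma_i$. Such $N_i$ exist, since by Gromov--Lawson every oriented bordism class in positive dimension has a psc representative.
\item Put the warped metric $\dt^2+f(t)^2h_i$ from the proof of \cref{lem:inheritance-for-doubling-conjecture} on $N_i\times[-1,1]$. Both ends are then strictly mean convex.
\item Attach a simply connected cobordism $W_i\colon N_i\leadsto\Sigma_i$ to each end, using \cref{lem:cobordism-principle-for-hpsc-bounding}. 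This applies because $\Sigma_i\embeds W_i$ is $1$-connected, so relative to $N_i$ there are only handles of index $\le d-1$. The result is a self-cobordism $X_i$ of $\Sigma_i$ with psc and two strictly mean convex ends.
\item Take a simply connected nullbordism $X$ of $\Sigma$ and read it as a cobordism $\Sigma_2\amalg\dots\amalg\Sigma_n\leadsto\Sigma_1$. Since $\Sigma_1\embeds X$ is $1$-connected, \cref{thm:wall-geometric-connectivity} shows it has only handles of index $\le d-1$. So the metric extends over $X$ glued to one end of each $X_i$.
\end{enumerate}
The $\pi_0$ problem disappears because you start from an already disconnected mean convex hypersurface. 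Each $\Sigma_i$ with $i\ge2$ survives as the free end of $X_i$.
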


\begin{proof}
  We start with the following observation: If $W\colon M_0\leadsto M_1$ is an oriented cobordism, we can perform surgery on the interior of $W$ to make $W$ simply connected.
  Therefore, if $M_1$ is connected, the inclusion $M_1\embeds W$ is $1$-connected and hence, $W$ consists of handles of codimension at least $2$.

  \medskip

  Let $\Sigma=\Sigma_1\amalg\dots\amalg \Sigma_n$ be the decomposition of $\Sigma$ into its components and let $X$ be a simply connected, oriented manifold with boundary $\Sigma$. 
  For $i\ge2$ we choose oriented manifolds $N_i$, which are oriented cobordant to $\Sigma_i$ and admit a psc-metric $h_i$.
  As in the proof of \cref{lem:inheritance-for-doubling-conjecture}, the metric $dt^2 + f^2h_i$ on $N_i\times [-1,1]$  for $f=1+\tfrac{t^2}{8n}$ has positive scalar curvature and strictly mean convex boundary.
  Furthermore, let $W_i$ be simply connected oriented cobordisms from $N_i$ to $\Sigma_i$.
  By \cref{lem:cobordism-principle-for-hpsc-bounding}, we can extend the metrics $dt^2 + f^2h_i$ to psc-metrics with strictly mean convex boundary on $X_i\coloneqq W_i\cup_{N_i} N_i\cup_{N_i} W_i^\op$, which is a self-cobordism of $\Sigma_i$.
  We define 
  \[M\coloneqq X\cup_{\Sigma_2\amalg\dots\amalg\Sigma_n} (X_2\amalg\dots\amalg X_n)\]
  Since $X$ is simply connected, we can extend the above psc-metrics on $X_2\amalg \dots\amalg X_n$ over $X$. 
  Hence, $M$ admits a psc-metric with mean convex boundary given by $\Sigma$.
\end{proof}

  \bigskip
  \printbibliography
  \bigskip

\end{document}